\newtheorem{proposition}{Proposition}
\newtheorem{lemma}{Lemma}
\newtheorem{theorem}{Theorem}
\theoremstyle{definition}\newtheorem{remark}{Remark}
\long\def\symbolfootnote[#1]#2{\begingroup
\def\thefootnote{\fnsymbol{footnote}}
\footnote[#1]{#2}\endgroup}
\begin{document}

\title{Stochastic Averaging for Constrained Optimization
with Application to Online Resource Allocation}
\author{ Tianyi Chen, Aryan Mokhtari, Xin Wang, Alejandro Ribeiro, and Georgios B. Giannakis
\thanks {Work in this paper was supported by NSF 1509040, 1508993, 1423316, 1514056, 1500713, 1509005, 0952867 and ONR N00014-12-1-0997; National Natural Science Foundation of China grant 61671154.
Part of the results in this paper were presented at IEEE Global Conference on Signal Information Processing, Washington, DC, USA, December 7-9, 2016 \cite{chen2016b}.}
\thanks{T. Chen and G. B. Giannakis are with the Department of Electrical and Computer Engineering and the Digital Technology Center, University of Minnesota, Minneapolis, MN 55455 USA. Emails: \{chen3827, georgios\}@umn.edu

A. Mokhtari and A. Ribeiro are with the Department of Electrical and Systems Engineering, University of Pennsylvania, Philadelphia, PA 19104 USA. Emails: \{aryanm, aribeiro\}@seas.upenn.edu

X. Wang is with the Key Laboratory for Information Science of Electromagnetic Waves (MoE), the Department of Communication Science and Engineering, Fudan University, Shanghai, China, and also with the Department of Computer and Electrical Engineering and Computer Science, Florida Atlantic University, Boca Raton, FL 33431 USA. Email:~xwang11@fudan.edu.cn
}
}

\markboth{}{}
\maketitle

\begin{abstract}
Existing resource allocation approaches for nowadays stochastic networks are challenged to meet fast convergence and tolerable delay requirements.
The present paper leverages online learning advances to facilitate online resource allocation tasks. By recognizing the central role of Lagrange multipliers, the underlying constrained optimization problem is formulated as a machine learning task involving both training and operational modes, with the goal of learning the sought multipliers in a fast and efficient manner.
To this end, an order-optimal offline learning approach is developed first for batch training, and it is then generalized to the online setting with a procedure termed learn-and-adapt. The novel resource allocation protocol permeates benefits of \textit{stochastic approximation} and \textit{statistical learning} to obtain low-complexity online updates with learning errors close to the statistical accuracy limits, while still preserving adaptation performance, which in the stochastic network optimization context guarantees queue stability. Analysis and simulated tests demonstrate that the proposed data-driven approach improves the delay and convergence performance of existing resource allocation schemes.
\end{abstract}
\begin{IEEEkeywords}
Stochastic optimization, statistical learning, stochastic approximation, network resource allocation.
\end{IEEEkeywords}



\section{Introduction}\label{S:Intro}
Contemporary cloud data centers (DCs) are proliferating to provide major Internet services such as  video distribution, and data backup \cite{Data2015}.
To fulfill the goal of reducing electricity cost and improving sustainability, advanced smart grid features  are nowadays adopted by cloud networks \cite{gao2012}.
Due to their inherent stochasticity, these features challenge the overall network design, and particularly the network's resource allocation task.
Recent approaches on this topic mitigate the spatio-temporal uncertainty associated with energy prices and renewable availability \cite{guo14,Urg11,chen2016,Yao12data,chen2016jsac}. Interestingly, the resultant algorithms reconfirm the critical role of the dual decomposition framework, and its renewed popularity for optimizing modern stochastic networks.

However, slow convergence and the associated network delay of existing resource allocation schemes have recently motivated improved first- and second-order optimization algorithms \cite{liu2016,li2015,zargham2013,ribeiro2010}.
Yet, historical data have not been exploited to mitigate future uncertainty.
This is the key novelty of the present paper, which permeates benefits of statistical learning to stochastic resource allocation tasks.

Targeting this goal, a critical observation is that renowned network optimization algorithms (e.g., back-pressure and max-weight) are intimately linked with Lagrange's dual theory, where the associated multipliers admit pertinent price interpretations \cite{huang2011,huang2013,valls2015}.
We contend that learning these multipliers can benefit significantly from historical relationships and trends present in massive datasets \cite{vapnik2013}.
In this context, we revisit the stochastic network optimization problem from a machine learning vantage point, with the goal of learning the Lagrange multipliers in a fast and efficient manner.
Works in this direction include \cite{huang2014} and \cite{zhang2016}, but the methods there are more suitable for problems with \textit{two features}: 1) the network states belong to a distribution with \textit{finite} support; and, 2) the feasible set is discrete with a \textit{finite} number of actions.
Without these two features, the involved learning procedure may become intractable, and the advantageous performance guarantee may not hold.
Overall, online resource allocation capitalizing on \textit{data-driven} learning schemes remains a largely uncharted territory.


Motivated by recent advances in machine learning, we systematically formulate the resource allocation problem as an online task with batch training and operational learning-and-adapting phases. In the batch training phase, we view the empirical dual problem of maximizing the sum of finite concave functions, as an empirical risk maximization (ERM) task, which is well-studied in machine learning \cite{vapnik2013}. Leveraging our specific ERM problem structure, we modify the recently developed stochastic average gradient approach (SAGA) to fit our training setup.
SAGA belongs to the class of fast incremental gradient (a.k.a. stochastic variance reduction) methods \cite{defazio2014}, which combine the merits of both stochastic gradient, and batch gradient methods. In the resource allocation setup, our \textit{offline} SAGA yields empirical Lagrange multipliers with order-optimal linear convergence rate as batch gradient approach, and per-iteration complexity as low as stochastic gradient approach.
Broadening the static learning setup in \cite{defazio2014} and \cite{Daneshmand16}, we further introduce a dynamic resource allocation approach (that we term \textit{online} SAGA) that operates in a learn-and-adapt fashion. Online SAGA fuses the benefits of stochastic approximation and statistical learning: In the learning mode, it preserves the simplicity of offline SAGA to dynamically learn from streaming data thus lowering the training error; while it also adapts by incorporating attributes of the well-appreciated stochastic dual (sub)gradient approach (SDG) \cite{guo14,Urg11,chen2016,Yao12data,neely2010}, in order to track queue variations and thus  guarantee long-term queue stability.

In a nutshell, the main contributions of this paper can be summarized as follows.
\begin{enumerate}
\item [c1)]
Using stochastic network management as a motivating application domain, we take a fresh look at dual solvers of constrained optimization problems as machine learning iterations involving training and operational phases.
\item [c2)]
During the training phase, we considerably broaden SAGA to efficiently compute Lagrange multiplier iterates at order-optimal linear convergence, and computational cost comparable to the stochastic gradient approach.
\item [c3)]
In the operational phase, our online SAGA learns-and-adapts at low complexity from streaming data.
For allocating stochastic network resources, this leads to a cost-delay tradeoff $[\mu,(1/\!{\sqrt{\mu}})\!\log^2(\mu)]$ with high probability, which markedly improves SDG's $[\mu,{1}/{\mu}]$ tradeoff \cite{neely2010}.
\end{enumerate}

\emph{Outline}. The rest of the paper is organized as follows. The system models are described in Section~\ref{S:ModelPrelim}. The motivating resource allocation setup is formulated in Section~\ref{sec.Stoc-net}.
Section~\ref{sec.SAGA} deals with our learn-and-adapt dual solvers for constrained optimization. Convergence analysis of the novel online SAGA is carried out in Section~\ref{sec:analysis}. Numerical tests are provided in Section~\ref{sec.Num}, followed by concluding remarks in Section~\ref{sec.Cons}.

\emph{Notation}. $\mathbb{E}~(\mathbb{P})$ denotes expectation (probability); $\mathbf{1}$ denotes the all-one vector; and $\|\mathbf{x}\|$ denotes the $\ell_2$-norm of vector $\mathbf{x}$. Inequalities for vectors $\mathbf{x} > \mathbf{0}$, are defined entry wise;
$[a]^+:=\max\{a,0\}$; and $(\cdot)^{\top}$ stands for transposition.
${\cal O}(\mu)$ denotes big order of $\mu$, i.e., ${\cal O}(\mu)/\mu\rightarrow 1$ as $\mu\rightarrow 0$; and $\mathbf{o}(\mu)$ denotes small order of $\mu$, i.e., $\mathbf{o}(\mu)/\mu\rightarrow 0$ as $\mu\rightarrow 0$.

\section{Network Modeling Preliminaries}\label{S:ModelPrelim}
This section focuses on resource allocation over a sustainable DC network with ${\cal J}:=\{1,2,\ldots,J\}$ mapping nodes (MNs), and ${\cal I}:=\{1,2,\ldots,I\}$ DCs.
MNs here can be authoritative DNS servers as used by Akamai and most content delivery networks, or HTTP ingress proxies as used by Google and Yahoo! \cite{xu2013}, which collect user requests over a geographical area (e.g., a city or a state), and then forward workloads to one or more DCs distributed across a large area (e.g., a country).

Notice though, that the algorithms and their performance analysis in Sections \ref{sec:analysis} and \ref{sec.SAGA} can be applied to general resource allocation tasks, such as energy management in power systems \cite{sun2016}, cross-layer rate-power allocation in communication links \cite{gatsis2010}, and traffic control in transportation networks \cite{gregoire2015}.


\noindent\textbf{Network constraints}. Suppose that interactive workloads are allocated as in e.g., \cite{xu2013}, and only delay-tolerant workloads that are deferrable are to be scheduled across slots.
Typical examples include system updates and data backup, which provide ample optimization opportunities for workload allocation based on the dynamic variation of energy prices, and the random availability of renewable energy sources.

Suppose that the time is indexed in discrete slots $t$, and let ${\cal T}:= \{0,1, \ldots\}$ denote an infinite time horizon. With reference to Fig. \ref{fig:system}, let $v_{j,t}$ denote the amount of delay-tolerant workload arriving at MN $j$ in slot $t$, and $\tilde{\mathbf{x}}_{j,t}:=[\tilde{x}_{1,j,t},\ldots,\tilde{x}_{I,j,t}]^{\top}$ the $I\times 1$ vector collecting the workloads routed from MN $j$ to all DCs in slot $t$. With the fraction of unserved workload  buffered in corresponding queues, the queue length at each mapping node $j$ at the beginning of slot $t$, obeys the recursion
\begin{equation}\label{W-delay}
q_{j,t+1}^{\rm mn}=\Big[q_{j,t}^{\rm mn}+v_{j,t}- \textstyle\sum_{i\in {\cal I}_j}\tilde{x}_{i,j,t}\Big]^{+}, ~~~\forall j, t
\end{equation}
where ${\cal I}_j$ denotes the set of DCs that MN $j$ is connected to.

At the DC side, $x_{i,t}$ denotes the workload processed by DC $i$ during slot $t$. Unserved portions of the workloads are buffered at DC queues, whose length obeys (cf. \eqref{W-delay})
\begin{equation}\label{Q-delay}
q_{i,t+1}^{\rm dc}=\Big[q_{i,t}^{\rm dc}-x_{i,t}+ \textstyle\sum_{j\in {\cal J}_i}\tilde{x}_{i,j,t}\Big]^{+}, ~~~\forall i, t
\end{equation}
where $q_{i,t}^{\rm dc}$ is the queue length in DC $i$ at the beginning of slot $t$, and ${\cal J}_i$ denotes the set of MNs that DC $i$ is linked with.
The per-slot workload is bounded by the capacity $D_i$ for each DC $i$; that is,
\begin{equation}\label{eq.ITcap}
	0\leq x_{i,t}\leq D_i,~\forall i,t.
\end{equation}

Since the MN-to-DC link has bounded bandwidth $B_{i,j}$, it holds that
\begin{equation}\label{BW}
0\leq\tilde{x}_{i,j,t} \leq B_{i,j},~~~\forall i,j,t.
\end{equation}
If MN $j$ and DC $i$ are not connected, then $B_{i,j}\!=\!0$.

%
\begin{figure}[t]
\centering
\vspace{-0.2cm}
\includegraphics[width=0.5\textwidth]{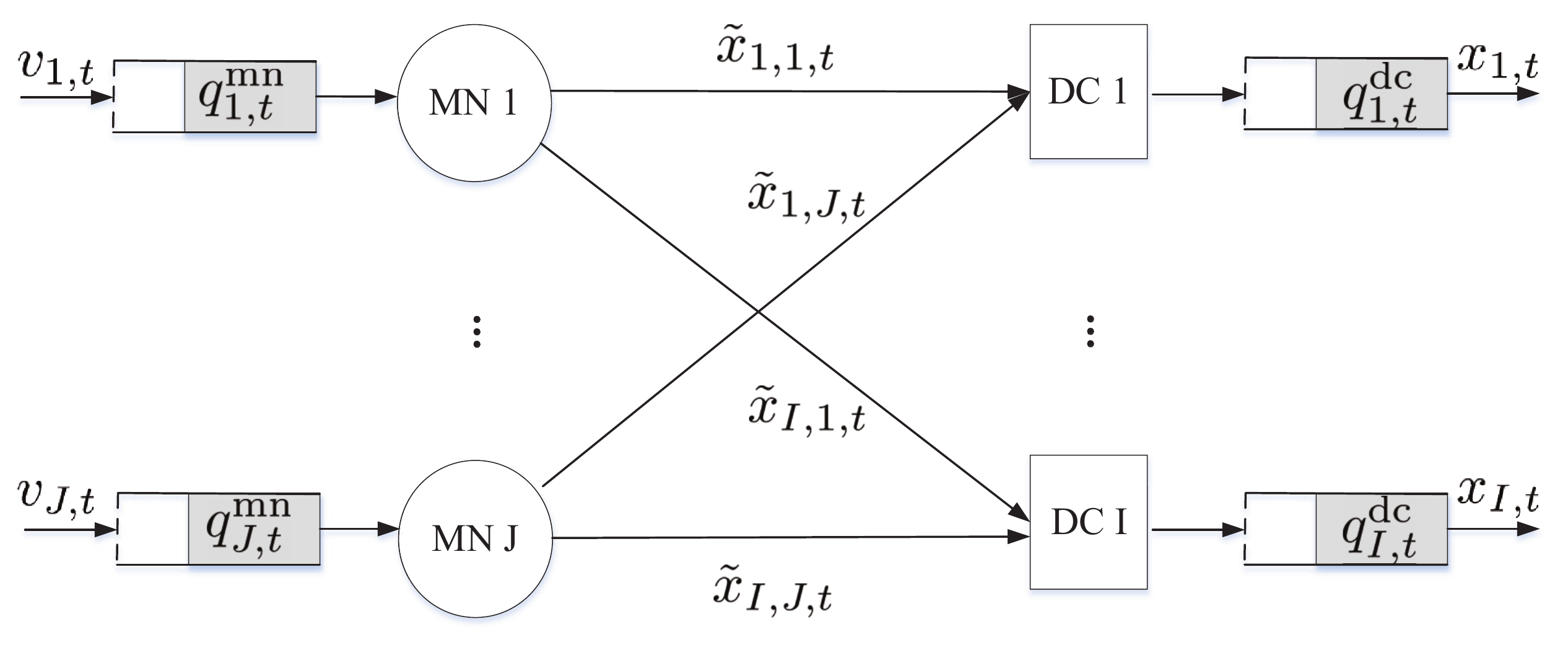}
\vspace{-0.8cm}
\caption{Diagram depicting components and variables of the MN-DC network.}
\vspace{-0.4cm}
\label{fig:system}
\end{figure}



\noindent\textbf{Operation costs}.
We will use the strictly convex function $G_{i,j}^{\rm d}(\,\cdot\,)$ to denote the cost for distributing workloads from MN $j$ to DC $i$, which depends on the distance between them.

Let $P_{i,t}^{\rm r}$ denote the energy provided at the beginning of slot $t$ by the renewable generator at DC $i$, which is constrained to lie in $[0,\overline{P}_i^{\rm r}]$.
The power consumption $P_i^{\rm dc}$ by DC $i$ is taken to be a quadratic function of the demand \cite{chen2016}; that is $P_i^{\rm dc}(x_{i,t})=e_{i,t} x_{i,t}^2$,
where $e_{i,t}$ is a time-varying parameter capturing environmental factors, such as humidity and temperature \cite{chen2016jsac}.
The energy transaction cost is modeled as a linear function of the power imbalance amount $|P_i^{\rm dc}(x_{i,t})-P_{i,t}^{\rm r}|$ to capture the cost in real time power balancing; that is,
$G_{i,t}^{\rm e}(x_{i,t}):=\alpha_{i,t}\left(e_{i,t}x_{i,t}^2 -P_{i,t}^{\rm r}\right)$, where $\alpha_{i,t}$ is the buy/sell price in the local power wholesale market.\footnote{When the prices of buying and selling are not the same, the transaction cost is a piece-wise linear function of the power imbalance amount as in \cite{chen2016}.}
Clearly, each DC should buy energy from external energy markets in slot $t$ at price $\alpha_{i,t}$ if $P_i^{\rm dc}(x_{i,t})-P_{i,t}^{\rm r}> 0$, or, sell energy to the markets with the same price, if $P_i^{\rm dc}(x_{i,t})-P_{i,t}^{\rm r}<0$.

The \emph{aggregate cost} for the considered MN-DC network per slot $t$ is a convex function, given by
\begin{align}\label{eq.net}
\Psi_t(\{x_{i,t}\},\{\tilde{\mathbf{x}}_{j,t}\})\!:=\!\sum_{i\in{\cal I}}\!G_{i,t}^{\rm e}(x_{i,t})\!+\!\sum_{i\in{\cal I}}\sum_{j\in {\cal J}} G_{i,j}^{\rm d}\Big(\tilde{\mathbf{x}}_{j,t}\Big).
\end{align}
When \eqref{eq.net} is assumed under specific operating conditions, our analysis applies to general smooth and strong-convex costs.

Building on \eqref{W-delay}-\eqref{eq.net}, the next section will deal with allocation of resources $\{x_{i,t},\tilde{\mathbf{x}}_{j,t}\}$ per slot $t$.
The goal will be to minimize the long-term average cost, subject to operational constraints.

\section{Motivating Application: Resource Allocation}\label{sec.Stoc-net}

For notational brevity, collect all random variables at time $t$ in the $(2I+J)\times 1$ state vector $\mathbf{s}_t:=\{\alpha_{i,t},e_{i,t},P_{i,t}^{\rm r},v_{j,t},\forall i,j\}$; the optimization variables in the $(IJ+I)\times 1$ vector $\mathbf{x}_t:=[\tilde{\mathbf{x}}_{1,t}^{\top},\ldots,\tilde{\mathbf{x}}_{J,t}^{\top}, x_{1,t},\ldots,x_{I,t}]^{\top}$ and let $\mathbf{q}_t:=\{q_{j,t}^{\rm mn},q_{i,t}^{\rm dc},~\forall i,j\}\in \mathbb{R}^{I+J}$.
Considering the scheduled $x_{i,t}$ as the outgoing amount of workload from DC $i$, define the $(I+J)\times (IJ+I)$ ``node-incidence" matrix with $(i,e)$ entry given by
   \begin{equation*}
	\mathbf{A}_{(i,e)}:=
	\left\{
	\begin{array}{rl}
         {1,}~  &\text{if link $e$ enters node $i$}\\
         {-1,}~ &\text{if link $e$ leaves node $i$}\\
         {0,}~ &\text{else.}
    \end{array}
   \right.
\end{equation*}
We assume that each row of
$\mathbf{A}$ has at least one $-1$ entry, and each column of
$\mathbf{A}$ has at most one $-1$ entry, meaning that each
node has at least one outgoing link, and each link has at most one
source node.
Further, collect the instantaneous workloads across MNs in the zero-padded $(I+J)\times 1$ vector $\mathbf{c}_t:=[v_{1,t},\ldots, v_{J,t}, 0, \ldots, 0]^{\top}$, as well as the link and DC capacities in the  $(I+J)\times 1$ vector $\mathbf{\bar{x}}:=[B_{1,1},\ldots, B_{i,j},D_1, \ldots, D_I]^{\top}$.

Hence, the sought scheduling is the solution of the following \textit{long-term} network-optimization problem
\begin{subequations}
\label{eq.prob}
\begin{align}
{\Psi}^{*}:=&\min_{\{\mathbf{x}_t,~\forall t\}}\, \lim_{T\rightarrow \infty}\frac{1}{T}\sum_{t=1}^T \mathbb{E}\left[\Psi_t(\mathbf{x}_t)\right]  \label{eq.proba}\\
\text{s. t. }~~~ &\mathbf{0}\leq\mathbf{x}_t \leq \mathbf{\bar{x}},~\forall t \label{eq.probl}\\
&\mathbf{q}_{t+1}=\left[\mathbf{q}_t+\mathbf{A}\mathbf{x}_t+\mathbf{c}_t\right]^{+}\!,~\forall t\label{eq.probm}\\
&\lim_{T\rightarrow \infty} \frac{1}{T} {\textstyle\sum_{t=1}^{T}} \mathbb{E}\left[\mathbf{q}_t\right]< \infty \label{eq.probn}
\end{align}
\end{subequations}
where \eqref{eq.probl} and \eqref{eq.probm} come from concatenating \eqref{eq.ITcap}-\eqref{BW} and \eqref{W-delay}-\eqref{Q-delay}, while \eqref{eq.probn} ensures strong queue stability as in \cite[Definition 2.7]{neely2010}.
The objective in \eqref{eq.proba} considers the entire time horizon, and the expectation is over all sources of randomness, namely the random vector $\mathbf{s}_t$, and the randomness of the variables $\mathbf{x}_t$ and $\mathbf{q}_t$ induced by the sample path of $\mathbf{s}_1, \mathbf{s}_2, \ldots, \mathbf{s}_t$.
For the problem \eqref{eq.prob}, the queue dynamics in \eqref{eq.probm} couple the optimization variables $\mathbf{x}_t$ over the infinite time horizon. For practical cases where the knowledge of $\mathbf{s}_t$ is causal, finding the optimal solution is generally intractable. Our approach to circumventing this obstacle is to replace \eqref{eq.probm} with limiting average constraints, and employ dual decomposition to separate the solution across time, as  elaborated next.

\subsection{Problem reformulation}\label{subsec.reform}
Substituting \eqref{eq.probm} into \eqref{eq.probn}, we will argue that the long-term aggregate (endogenous plus exogenous) workload must satisfy the following necessary condition
\begin{equation}\label{Queue-relax}
\lim_{T\rightarrow \infty}\!\frac{1}{T}\!\sum_{t=1}^T\mathbb{E}\left[\mathbf{A}\mathbf{x}_t+\mathbf{c}_t\right]\leq \mathbf{0}.
\end{equation}
Indeed, \eqref{eq.probm} implies
$\mathbf{q}_{t+1} \geq \mathbf{q}_t+\mathbf{A}\mathbf{x}_t+\mathbf{c}_t$ that after summing over
$t=1,\ldots,T$ and taking expectations yields $\mathbb{E}[\mathbf{q}_{T+1}]\geq\mathbb{E}[\mathbf{q}_1] +
\sum_{t=1}^{T} \mathbb{E}[\mathbf{A}\mathbf{x}_t+\mathbf{c}_t]$.
Since both $\mathbf{q}_1$ and $\mathbf{q}_{T+1}$ are bounded under \eqref{eq.probn},
dividing both sides by $T$ and taking $T\rightarrow
\infty$, yields \eqref{Queue-relax}.
Using \eqref{Queue-relax}, we can write a relaxed version of \eqref{eq.prob} as
%
\begin{align} \label{eq.reform}
\tilde{\Psi}^{*} := \min_{\{\mathbf{x}_t,\forall t\}} \, \lim_{T\rightarrow \infty}\frac{1}{T}\sum_{t=1}^T \mathbb{E}\left[\Psi_t(\mathbf{x}_t)\right] \quad \text{s.t.}~ \eqref{eq.probl}~{\rm and}~\eqref{Queue-relax}.
\end{align}
%

Compared to \eqref{eq.prob}, the queue variables $\mathbf{q}_t$ are not present in \eqref{eq.reform}, while the time-coupling constraints \eqref{eq.probm} and \eqref{eq.probn} are replaced with \eqref{Queue-relax}.
As \eqref{eq.reform} is a relaxed version of \eqref{eq.prob}, it follows that $\tilde{\Psi}^{*}\leq {\Psi}^{*}$.
Hence, if one solves \eqref{eq.reform} instead of \eqref{eq.prob}, it will be prudent to derive an optimality bound on ${\Psi}^{*}$, provided that the schedule obtained by solving \eqref{eq.reform} is feasible for \eqref{eq.prob}.
In addition, using arguments similar to those in \cite{neely2010} and \cite{Urg11}, it can be shown that if the random process $\{\mathbf{s}_t\}$ is stationary, there exists a stationary control policy $\bm{\chi}(\cdot)$, which is only a function of the current $\mathbf{s}_t$; it satisfies \eqref{eq.probl} almost surely; and guarantees that  $\mathbb{E}[\Psi_t(\bm{\chi}(\mathbf{s}_t))] = \tilde{\Psi}^{*}$, and $\mathbb{E}[\mathbf{A}\bm{\chi}(\mathbf{s}_t)+\mathbf{c}_t(\mathbf{s}_t)]\leq \mathbf{0}$. This implies that the infinite time horizon problem \eqref{eq.reform} is equivalent to the following per slot convex program
\begin{subequations}\label{eq.reform2}
\begin{align}
\tilde{\Psi}^{*} := &\min_{\bm{\chi}(\cdot)} \; \mathbb{E}\left[\Psi\big(\bm{\chi}(\mathbf{s}_t);\mathbf{s}_t\big)\right] \label{eq.reforme0}\\
\text{s.t.}~~~&\mathbb{E}[\mathbf{A}\bm{\chi}(\mathbf{s}_t)+\mathbf{c}_t(\mathbf{s}_t)]\leq \mathbf{0}\label{eq.reforme1}\\
&\bm{\chi}(\mathbf{s}_t)\in{\cal X}:=\{\mathbf{0}\leq \bm{\chi}(\mathbf{s}_t) \leq \mathbf{\bar{x}}\},~\forall \mathbf{s}_t\label{eq.reforme2}
\end{align}
\end{subequations}
where we interchangeably used $\bm{\chi}(\mathbf{s}_t)\!:=\!\mathbf{x}_t$ and $\Psi(\mathbf{x}_t;\mathbf{s}_t)\!:=\!\Psi_t(\mathbf{x}_t)$, to emphasize the dependence of the real-time cost $\Psi_t$ and decision $\mathbf{x}_t$ on the random state $\mathbf{s}_t$.
Note that the optimization in \eqref{eq.reform2} is w.r.t. the stationary policy $\bm{\chi}(\cdot)$. Hence, there is an infinite number of variables in the primal domain. Observe though, that there is a finite number of constraints coupling the realizations (cf. \eqref{eq.reforme1}). Thus, the dual problem contains a finite number of variables, hinting that the problem is perhaps tractable in the dual space \cite{gatsis2010,ribeiro2010}. Furthermore, we will demonstrate in Section V that after a careful algorithmic design in Section IV, our online solution for the relaxed problem \eqref{eq.reform2} is also feasible for  \eqref{eq.prob}.

\subsection{Lagrange dual and optimal solutions}\label{subsec.Lag-dual}
Let $\bm{\lambda}:=[\lambda_{1}^{\rm mn},\ldots,\lambda_{J}^{\rm mn}, \lambda_{1}^{\rm dc}, \ldots, \lambda_{I}^{\rm dc}]^{\top}$ denote the $(I+J)\times 1$ Lagrange multiplier vector associated with constraints \eqref{eq.reforme1}. Upon defining $\mathbf{x}:= \{\mathbf{x}_t, \forall t\}$, the partial Lagrangian function of \eqref{eq.reform} is ${\cal L}(\mathbf{x},\bm{\lambda}) \!:=\mathbb{E}\big[{\cal L}_t(\mathbf{x}_t,\bm{\lambda})\big]$, where the instantaneous Lagrangian is given by\begin{align}\label{eq.Lam}
{\cal L}_t(\mathbf{x}_t,\bm{\lambda}) \!:=&\Psi_t(\mathbf{x}_t)+\bm{\lambda}^{\top}(\mathbf{A}\mathbf{x}_t+\mathbf{c}_t).
\end{align}
Considering ${\cal X}$ as the feasible set specified by the instantaneous constraints in  \eqref{eq.reforme2}, which are not dualized in \eqref{eq.Lam}, the dual function ${\cal D}(\bm{\lambda})$ can be written as
\begin{equation}\label{eq.dual-func}
{\cal D}(\bm{\lambda}):=\min_{\{\mathbf{x}_t \in {\cal X},\,\forall t\}}\!{\cal L}(\mathbf{x},\bm{\lambda}):=\mathbb{E}\Big[\min_{\mathbf{x}_t \in {\cal X}}{\cal L}_t(\mathbf{x}_t,\bm{\lambda})\Big].
\end{equation}
Correspondingly, the dual problem of \eqref{eq.reform} is
\begin{align}\label{eq.dual-prob}
 \max_{\bm{\lambda}\geq 0} \, {\cal D}(\bm{\lambda}):=\mathbb{E}\left[{\cal D}_t(\bm{\lambda})\right]
\end{align}
where ${\cal D}_t(\bm{\lambda}):=\min_{\mathbf{x}_t \in {\cal X}}{\cal L}_t(\mathbf{x}_t,\bm{\lambda})$. We henceforth refer to \eqref{eq.dual-prob} as the ensemble dual problem.
Note that similar to $\Psi_t$, here ${\cal D}_t$ and ${\cal L}_t$ are both parameterized by $\mathbf{s}_t$.

If the optimal Lagrange multipliers $\bm{\lambda}^*$ were known, a sufficient condition for the optimal solution of \eqref{eq.reform} or \eqref{eq.reform2} would be to minimize the Lagrangian ${\cal L}(\mathbf{x},\bm{\lambda}^*)$ or its instantaneous versions $\{{\cal L}_t(\mathbf{x}_t,\bm{\lambda}^*)\}$ over the set ${\cal X}$ \cite[Proposition 3.3.4]{bertsekas1999}.
Specifically, as formalized in the ensuing proposition, the optimal routing $\{\tilde{\mathbf{x}}_{j,t}^*,\forall j\}$ and workload $\{x_{i,t}^*,\forall i\}$ schedules in this MN-DC network can be expressed as a function of $\bm{\lambda}^*$ associated with \eqref{eq.reforme1}, and the realization of the random state $\mathbf{s}_t$.

\begin{proposition}\label{prop.closedform}
Consider the strictly convex costs $\nabla G_{i,j}^{\rm d}$ and $\nabla G_{i,t}^{\rm e}$ in \eqref{eq.net}.
Given the realization $\mathbf{s}_t$ in \eqref{eq.reform2}, and the Lagrange multipliers $\bm{\lambda}^*$ associated with \eqref{eq.reforme1}, the optimal instantaneous workload routing decisions are
\begin{subequations}\label{close-sol}
	\begin{equation}\label{close-up1}
		~~~~~~~\tilde{x}_{i,j,t}^*(\mathbf{s}_t)=\big[(\nabla G_{i,j}^{\rm d})^{-1}\big((\lambda_{j}^{\rm mn})^*-(\lambda_{i}^{\rm dc})^*\big)\big]_0^{B_{i,j}}
	\end{equation}
 and the optimal instantaneous workload scheduling decisions are given by [with $G_{i}^{\rm e}(\mathbf{x}_t;\mathbf{s}_t):=G_{i,t}^{\rm e}(\mathbf{x}_t)$]
	\begin{equation}\label{close-up2}
		x_{i,t}^*(\mathbf{s}_t)=\big[\big(\nabla G_{i}^{\rm e}(\mathbf{s}_t)\big)^{-1}\big((\lambda_{i}^{\rm dc})^*\big)\big]_0^{D_i}
	\end{equation}
\end{subequations}
where $(\nabla G_{i,j}^{\rm d})^{-1}$ and $\big(\nabla G_{i}^{\rm e}(\mathbf{s}_t)\big)^{-1}$ denote the inverse functions of $\nabla G_{i,j}^{\rm d}$ and $\nabla G_{i,t}^{\rm e}$, respectively.

\end{proposition}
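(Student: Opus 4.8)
The plan is to reduce the claim to a single box-constrained scalar minimization per variable, by invoking the sufficiency result \cite[Proposition 3.3.4]{bertsekas1999} already cited above: once the optimal multiplier $\bm{\lambda}^*$ is available, a primal optimal policy for \eqref{eq.reform2} is recovered by minimizing ${\cal L}(\mathbf{x},\bm{\lambda}^*)$ over ${\cal X}$, and by the interchange of minimization and expectation in \eqref{eq.dual-func} this decomposes across realizations, so for each fixed $\mathbf{s}_t$ the optimal decision $\bm{\chi}^*(\mathbf{s}_t)$ solves $\min_{\mathbf{x}_t\in{\cal X}}{\cal L}_t(\mathbf{x}_t,\bm{\lambda}^*)$. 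The whole proof then amounts to writing this minimizer in closed form.

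First I would expand the linear term $\bm{\lambda}^{\top}(\mathbf{A}\mathbf{x}_t+\mathbf{c}_t)$ in \eqref{eq.Lam} using the $\pm 1$ sign convention of the node-incidence matrix $\mathbf{A}$ (as dictated by the queue recursions \eqref{W-delay}--\eqref{Q-delay}): the routing variable $\tilde{x}_{i,j,t}$ leaves MN $j$ and enters DC $i$, hence its coefficient in $\bm{\lambda}^{\top}\mathbf{A}\mathbf{x}_t$ is $(\lambda_i^{\rm dc})^*-(\lambda_j^{\rm mn})^*$; the scheduling variable $x_{i,t}$ leaves DC $i$, hence its coefficient is $-(\lambda_i^{\rm dc})^*$; and $\bm{\lambda}^{\top}\mathbf{c}_t=\sum_j(\lambda_j^{\rm mn})^*v_{j,t}$ does not depend on $\mathbf{x}_t$. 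Together with the separable structure of $\Psi_t$ in \eqref{eq.net}, this rewrites ${\cal L}_t(\mathbf{x}_t,\bm{\lambda}^*)$, up to an additive constant independent of $\mathbf{x}_t$, as $\sum_i\big[G_{i,t}^{\rm e}(x_{i,t})-(\lambda_i^{\rm dc})^*x_{i,t}\big]+\sum_{i,j}\big[G_{i,j}^{\rm d}(\tilde{x}_{i,j,t})+\big((\lambda_i^{\rm dc})^*-(\lambda_j^{\rm mn})^*\big)\tilde{x}_{i,j,t}\big]$. Since ${\cal X}=\{\mathbf{0}\leq\mathbf{x}_t\leq\bar{\mathbf{x}}\}$ is a Cartesian product of intervals, the minimization decouples into independent scalar problems, one per entry of $\mathbf{x}_t$.

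Next I would solve each scalar subproblem by first-order conditions plus projection. For $x_{i,t}\in[0,D_i]$, the objective $G_{i,t}^{\rm e}(x_{i,t})-(\lambda_i^{\rm dc})^*x_{i,t}$ is strictly convex since $G_{i,t}^{\rm e}$ is, so $\nabla G_{i,t}^{\rm e}$ is strictly increasing and invertible; setting the derivative to zero gives the unconstrained minimizer $\big(\nabla G_i^{\rm e}(\mathbf{s}_t)\big)^{-1}\big((\lambda_i^{\rm dc})^*\big)$, whose projection onto $[0,D_i]$ is exactly \eqref{close-up2}. Likewise, for $\tilde{x}_{i,j,t}\in[0,B_{i,j}]$, the strictly convex objective $G_{i,j}^{\rm d}(\tilde{x}_{i,j,t})+\big((\lambda_i^{\rm dc})^*-(\lambda_j^{\rm mn})^*\big)\tilde{x}_{i,j,t}$ has stationary point $(\nabla G_{i,j}^{\rm d})^{-1}\big((\lambda_j^{\rm mn})^*-(\lambda_i^{\rm dc})^*\big)$, whose projection onto $[0,B_{i,j}]$ gives \eqref{close-up1}.

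I do not expect a genuine obstacle; the only points requiring care are (i) tracking the sign pattern of $\mathbf{A}$ so that the arguments of the inverse gradients appear with the correct sign, and (ii) confirming that the per-realization minimizers truly assemble into the optimal stationary policy $\bm{\chi}^*(\cdot)$ — which is precisely what the decomposition \eqref{eq.dual-func} together with the cited sufficiency proposition delivers. Strict convexity of the costs makes each scalar minimizer unique, so the displayed closed forms describe the unique optimal routing and scheduling schedule.
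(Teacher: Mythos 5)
Your derivation is correct and is precisely the argument the paper has in mind: the paper omits the proof, noting only that it "can be easily derived using the KKT conditions," and your route—Lagrangian minimization at $\bm{\lambda}^*$, per-realization and per-coordinate decoupling over the box ${\cal X}$, first-order conditions, and projection onto $[0,B_{i,j}]$ and $[0,D_i]$—is that standard derivation with the signs of $\mathbf{A}$ tracked correctly. No gaps.
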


We omit the proof of Proposition \ref{prop.closedform}, which can be easily derived using the KKT conditions for constrained optimization \cite{bertsekas1999}.
Building on Proposition \ref{prop.closedform}, it is interesting to observe that the stationary policy we are looking for in Section \ref{subsec.reform} is expressible uniquely in terms of $\bm{\lambda}^*$.
The intuition behind this solution is that the Lagrange multipliers act as interfaces between MN-DC and workload-power balance, capturing the availability of resources and utility information which is relevant from a resource allocation point of view.

However, to implement the optimal resource allocation in \eqref{close-sol}, the optimal multipliers $\bm{\lambda}^*$ must be known. To this end, we first outline the celebrated stochastic approximation-based and corresponding Lyapunov approaches to stochastic network optimization.
Subsequently, we develop a novel approach in Section \ref{sec.SAGA} to learn the optimal multipliers in both offline and online settings.

\subsection{Stochastic dual (sub-)gradient ascent}\label{subsec.DGD}
For the ensemble dual problem \eqref{eq.dual-prob}, a standard (sub)gradient iteration involves taking the expectation over the distribution of $\mathbf{s}_t$ to compute the gradient \cite{gatsis2010}.
This is challenging because the underlying distribution of $\mathbf{s}_t$ is usually unknown in practice.
Even if the joint probability distribution functions were available, finding the expectations can be non-trivial especially in high-dimensional settings ($I\gg 1$ and/or $J\gg 1$).

To circumvent this challenge, a popular solution relies on stochastic approximation  \cite{robbins1951,neely2010,chen2016}. The resultant stochastic dual gradient (SDG) iterations can be written as (cf. \eqref{eq.dual-prob})
\begin{align}\label{eq.dual-stocg}
\check{\bm{\lambda}}_{t+1} ~=\big[ \check{\bm{\lambda}}_t + \mu \nabla{\cal D}_t(\check{\bm{\lambda}}_t)\big]^{+}
\end{align}
where the stochastic (instantaneous) gradient $\nabla{\cal D}_t(\check{\bm{\lambda}}_t)=\mathbf{A}\mathbf{x}_t+\mathbf{c}_t$ is an unbiased estimator of the ensemble gradient given by $\mathbb{E}[\nabla{\cal D}_t(\check{\bm{\lambda}}_t)]$. The primal variables $\mathbf{x}_t$ can be found by solving ``on-the-fly'' the instantaneous problems, one per slot $t$
\begin{align}\label{eq.SA-sub}
	\mathbf{x}_t\in\arg\min_{\mathbf{x}_t \in {\cal X}}{\cal L}_t(\mathbf{x}_t,\check{\bm{\lambda}}_t)
\end{align}
where the operator $\in$ accounts for cases that the Lagrangian has multiplier minimizers.
The minimization in \eqref{eq.SA-sub} is not difficult to solve. For a number of relevant costs and utility functions in Proposition 1, closed-form solutions are available for the primal variables.
Note from \eqref{eq.dual-stocg} that the iterate $\check{\bm{\lambda}}_{t+1}$ depends only on the probability distribution of $\mathbf{s}_t$ through the stochastic gradient $\nabla{\cal D}_t(\check{\bm{\lambda}}_t)$. Consequently, the process $\{\check{\bm{\lambda}}_t\}$ is Markov with transition probability that is time invariant since $\mathbf{s}_t$ is stationary. In the context of Lyapunov optimization \cite{Urg11,huang2011} and \cite{neely2010}, the Markovian iteration $\{\check{\bm{\lambda}}_t\}$ in \eqref{eq.dual-stocg} is interpreted as a virtual queue recursion; i.e., $\check{\bm{\lambda}}_t=\mu\mathbf{q}_t$.

Thanks to their low complexity and ability to cope with non-stationary scenarios, SDG-based approaches are widely used in various research disciplines; e.g., adaptive signal processing \cite{kong1995}, stochastic network optimization \cite{neely2010,huang2011,huang2014,chen2017tpds}, and energy management in power grids \cite{Urg11,sun2016}.
Unfortunately, SDG iterates are known to converge slowly.
Although simple, SDG does not exploit the often sizable number of historical samples.
These considerations motivate a systematic design of an offline-aided-online approach, which can significantly improve online performance of SDG for constrained optimization, and can have major impact in e.g.,  network resource allocation tasks by utilizing streaming big data, while preserving its low complexity and fast adaptation.

\section{Learn-and-adapt Resource allocation}\label{sec.SAGA}

Before developing such a promising approach that we view as learn-and-adapt SDG scheme, we list our
assumptions that are satisfied in typical network resource allocation problems.

\noindent\textbf{(as1)} \textit{State process $\{\mathbf{s}_t\}$ is independent and identically distributed (i.i.d.), and the common probability density function (pdf) has bounded support.}

\noindent\textbf{(as2)} \textit{The cost $\Psi_t(\mathbf{x}_t)$ in \eqref{eq.net} is non-decreasing w.r.t. $\mathbf{x}_t\in{\cal X}:=\{\mathbf{0}\leq \mathbf{x}_t \leq \mathbf{\bar{x}}\}$; it is a $\sigma$-strongly convex function\footnote{We say that a function $f: {\rm dom}(f) \rightarrow \mathbb{R}$ is $\sigma$-strongly convex if and only if $f(\mathbf{x})-\frac{\sigma}{2} \|\mathbf{x}\|^2$ is convex for all $\mathbf{x}\in {\rm dom}(f)$, where ${\rm dom}(f)\subseteq\mathbb{R}^n$ \cite{bertsekas1999}.}; and its gradient is Lipschitz continuous with constant $\tilde{L}$ for all $t$. }

\noindent\textbf{(as3)} \textit{There exists a stationary policy $\bm{\chi}(\cdot)$ satisfying $0\leq\bm{\chi}(\mathbf{s}_t) \leq \mathbf{\bar{x}}$, for all $\mathbf{s}_t$, and $\mathbb{E}[\mathbf{A}\bm{\chi}(\mathbf{s}_t)+\mathbf{c}_t(\mathbf{s}_t)]\leq -\bm{\zeta}$, where $\bm{\zeta}>\mathbf{0}$ is the slack constant}; and

\noindent\textbf{(as4)} \textit{The instantaneous dual function ${\cal D}_t(\bm{\lambda})$ in \eqref{eq.dual-prob} is $\epsilon$-strongly concave, and its gradient $\nabla{\cal D}_t(\bm{\lambda})$ is $L$-Lipschitz continuous with condition number $\kappa=L/\epsilon$, for all $t$.}

Although {(as1)} can be relaxed if ergodicity holds, it is typically adopted by stochastic resource allocation schemes for simplicity in exposition \cite{huang2011,huang2014,eryilmaz2006}.
Under {(as2)}, the objective function is non-decreasing and strongly convex, which is satisfied in practice with quadratic/exponential utility or cost functions \cite{eryilmaz2006}.
The so-called Slater's condition in {(as3)} ensures the existence of a bounded Lagrange multiplier \cite{bertsekas1999}, which is necessary for the queue stability of \eqref{eq.prob}; see e.g., \cite{Geor06,huang2014}.
If (as3) cannot be satisfied, one should consider reducing the workload arrival rates at the MN side, or, increasing the link and facility capacities at the DC side.
The $L$-Lipschitz continuity of $\nabla{\cal D}_t(\bm{\lambda})$ in (as4) directly follows from the strong-convexity of the primal function in {(as2)} with $L=\rho(\mathbf{A}^{\top}\mathbf{A})/\sigma$, where $\rho(\mathbf{A}^{\top}\mathbf{A})$ is the spectral radius of the matrix $\mathbf{A}^{\top}\mathbf{A}$.
The strong concavity in (as4) is frequently assumed in network optimization \cite{liu2016}, and it is closely related to the local smoothness and the uniqueness of Lagrange multipliers assumed in \cite{huang2011,huang2014,eryilmaz2006}.
For pessimistic cases, {(as4)} can be satisfied by subtracting an $\ell_2$-regularizer from the dual function \eqref{eq.dual-func}, which is typically used in machine learning applications (e.g., ridge regression).
We quantify its sub-optimality in Appendix A.
Note that Appendix A implies that the primal solution $\mathbf{x}_t^*$ will be ${\cal O}(\sqrt{\epsilon})$-optimal and feasible for the regularizer $({\epsilon}/{2})\|\bm{\lambda}\|^2$. Since we are after an ${\cal O}(\mu)$-optimal online solution, it suffices to set $\epsilon={\cal O}(\mu)$.

\subsection{Batch learning via offline SAGA based training}\label{subsec.offSAGA}

Consider that a training set of $N$ historical state samples ${\cal S}:=\{\mathbf{s}_n,1\leq n\leq N\}$ is available. Using ${\cal S}$, we can find an empirical version of \eqref{eq.dual-func} via sample averaging as
\begin{equation}\label{eq.dual-func2}
\hat{{\cal D}}_{{\cal S}}(\bm{\lambda}):=\frac{1}{N}\sum_{n=1}^N\hat{{\cal D}}_n(\bm{\lambda})=\frac{1}{N}\sum_{n=1}^N\Big[\min_{\mathbf{x}_n \in {\cal X}}{\cal L}_n(\mathbf{x}_n,\bm{\lambda})\Big].
\end{equation}
Note that $t$ has been replaced by $n$ to differentiate training (based on historical data) from operational (a.k.a. testing or tracking) phases.
Consequently, the empirical dual problem can be expressed as
\begin{align}\label{eq.dual-prob2}
 \max_{\bm{\lambda}\geq \mathbf{0}} \, \frac{1}{N}\sum_{n=1}^N\hat{{\cal D}}_n(\bm{\lambda}).
\end{align}

Recognizing that the objective is a sum of finite concave functions, the task in \eqref{eq.dual-prob2} in the machine learning parlance is termed empirical risk maximization (ERM) \cite{vapnik2013}, which is carried out using
the batch gradient ascent iteration
\begin{equation}\label{dual-gd}
	 \bm{\lambda}_{k+1}=\left[\bm{\lambda}_k\!+\!\frac{\eta}{N} \sum_{n=1}^N\nabla\hat{{\cal D}}_n(\bm{\lambda}_k)\right]^{+}\!
\end{equation}
where the index $k$ represents the batch learning (iteration) index, and $\eta$ is the stepsize that controls the learning rate.
While iteration \eqref{dual-gd} exhibits a decent convergence rate, its computational complexity will be prohibitively high as the data size $N$ grows large. A typical alternative is to employ a stochastic gradient (SG) iteration, which uniformly at random selects one of the summands in \eqref{dual-gd}. However, such an SG iteration relies only on a single unbiased gradient correction, which leads to a sub-linear convergence rate. Hybrids of stochastic with batch gradient methods are popular subjects recently \cite{defazio2014,roux2012}.\footnote{Stochastic iterations for the empirical dual problem are different from that in Section \ref{subsec.DGD}, since stochasticity is introduced by the randomized algorithm itself, in oppose to the stochasticity of future states in the online setting.}

Leveraging our special problem structure, we will adapt the recently developed stochastic average gradient approach (SAGA) to fit our dual space setup, with the goal of efficiently computing empirical Lagrange multipliers. Compared with the original SAGA that is developed for \textit{unconstrained} optimization \cite{defazio2014}, here we start from the \textit{constrained} optimization problem \eqref{eq.prob}, and derive first a projected form of SAGA.


\begin{algorithm}[t]
\caption{Offline SAGA iteration for batch learning}\label{algo}
\begin{algorithmic}[1]
\State \textbf{Initialize:} $\bm{\lambda}_0$, $k[n]=0,\,\forall n$, and stepsize $\eta$.
\For {$k=0,1,2\dots$}
\State Pick a sample index $\nu(k)$ uniformly at random from the {\color{white}~~~~}set $\{1,\ldots,N\}$.
\State Evaluate $\nabla\hat{{\cal D}}_{\nu(k)}(\bm{\lambda}_k)$, and update $\bm{\lambda}_{k+1}$ as in \eqref{eq.saga}.
\State Update the iteration index $k[n]\!=\!k$ for $\nu(k)=n$.
\EndFor
\end{algorithmic}
\end{algorithm}

Per iteration $k$, offline SAGA first evaluates at the current iterate $\bm{\lambda}_k$, one gradient sample $\nabla\hat{{\cal D}}_{\nu(k)}(\bm{\lambda}_k)$ with sample index $\nu(k)\in\{1,\ldots,N\}$ selected \textit{uniformly at random}. Thus, the computational complexity of SAGA is that of SG, and markedly less than the batch gradient ascent \eqref{dual-gd}, which requires $N$ such evaluations.
Unlike SG however, SAGA stores a collection of the most recent gradients $\nabla\hat{{\cal D}}_n(\bm{\lambda}_{k[n]})$ for all samples $n$, with the auxiliary iteration index $k[n]$ denoting the most recent past iteration that sample $n$ was randomly drawn; i.e., $k[n]:=\sup\{k':\nu(k')=n,\, 0\leq k'< k\}$.
Specifically, SAGA's gradient $\mathbf{g}_k(\bm{\lambda}_k)$ combines linearly the gradient $\nabla\hat{{\cal D}}_{\nu(k)}(\bm{\lambda}_k)$ randomly selected at iteration $k$ with the stored ones $\{\nabla\hat{{\cal D}}_n(\bm{\lambda}_{k[n]})\}_{n=1}^N$ to update the multipliers.
The resultant gradient $\mathbf{g}_k(\bm{\lambda}_k)$ is the sum of the
difference between the fresh gradient $\nabla\hat{{\cal D}}_{\nu(k)}(\bm{\lambda}_k)$ and the stored one $\nabla\hat{{\cal D}}_{\nu(k)}(\bm{\lambda}_{k[\nu(k)]})$ at the same sample, as well as the average of all gradients in the memory, namely
\vspace{-0.2cm}
\begin{subequations}\label{eq.saga}
	\begin{align}\label{eq.saga_grad}
    \mathbf{g}_k(\bm{\lambda}_k)\!=\!\nabla\hat{{\cal D}}_{\nu(k)}(\bm{\lambda}_k)\!-\!\nabla\hat{{\cal D}}_{\nu(k)}(\bm{\lambda}_{k[\nu(k)]})\!+\!\frac{1}{N}\!\sum_{n=1}^N\!{\nabla\hat{{\cal D}}_n(\bm{\lambda}_{k[n]})}.
  \end{align}
Therefore, the update of the offline SAGA can be written as
\begin{align}\label{eq.saga_iter}
\bm{\lambda}_{k+1}=\left[\bm{\lambda}_k+\eta \mathbf{g}_k(\bm{\lambda}_k)\right]^{+}
  \end{align}
\end{subequations}
  where $\eta$ denotes the stepsize. The steps of offline SAGA are summarized in Algorithm \ref{algo}.

To expose the merits of SAGA, recognize first that since $\nu(k)$ is drawn uniformly at random from set $\{1,\ldots,N\}$, we have that $\mathbb{P}\{\nu(k)=n\}=1/N$, and thus the expectation of the corresponding gradient sample is given by
\vspace{-0.2cm}
\begin{equation}
	\!\mathbb{E}[\nabla\hat{{\cal D}}_{\nu(k)}(\bm{\lambda}_k)]\!:=\!\!\sum_{n=1}^N\! \mathbb{P}\{\nu(k)=n\} \nabla\hat{{\cal D}}_n(\bm{\lambda}_k)\!=\!\frac{1}{N}\!\sum_{n=1}^N\!\nabla\hat{{\cal D}}_n(\bm{\lambda}_k).
\end{equation}
Hence, $\nabla\hat{{\cal D}}_{\nu(k)}(\bm{\lambda}_k)$ is an unbiased estimator of the empirical gradient in \eqref{dual-gd}. Likewise, $\mathbb{E}[\nabla\hat{{\cal D}}_{\nu(k)}(\bm{\lambda}_{k[\nu(k)]})]=(1/N)\sum_{n=1}^N\nabla\hat{{\cal D}}_n(\bm{\lambda}_{k[n]})$, which implies that the last two terms in \eqref{eq.saga_grad} disappear when taking the mean w.r.t. $\nu(k)$; and thus, SAGA's stochastic averaging gradient estimator $\mathbf{g}_k(\bm{\lambda}_k)$ is unbiased, as is the case with SG that only employs $\nabla\hat{{\cal D}}_{\nu(k)}(\bm{\lambda}_k)$.

With regards to variance, SG's gradient estimator has $\tilde{\sigma}_k^2:={\rm var}(\nabla\hat{{\cal D}}_{\nu(k)}(\bm{\lambda}_k))$, which can be scaled down using decreasing stepsizes (e.g., $\eta_k=1/\sqrt{k}$), to effect convergence of $\{\bm{\lambda}_k\}$ iterates in the mean-square sense \cite{robbins1951}. As can be seen from the correction term in \eqref{eq.saga_grad}, SAGA's gradient estimator has lower variance than $\tilde{\sigma}_k^2$. Indeed, the sum term of $\mathbf{g}_k(\bm{\lambda}_k)$ in \eqref{eq.saga_grad} is deterministic, and thus it has no effect on the variance. However, representing gradients of the same drawn sample $\nu(k)$, the first two terms are highly correlated, and their difference has variance considerably smaller than $\tilde{\sigma}_k^2$.
More importantly, the variance of stochastic gradient approximation vanishes as $\bm{\lambda}_k$ approaches the optimal argument $\bm{\lambda}_{\cal S}^*$ for \eqref{eq.dual-prob2}; see e.g., \cite{defazio2014}. This is in oppose to SG where the variance of stochastic approximation remains even if the iterates are close to the optimal solution. This \textit{variance reduction} property allows SAGA to achieve a linear convergence with constant stepsizes, which is not achievable for the SG method.

\begin{figure}[t]
\centering
\vspace{-0.4cm}
\includegraphics[height=0.3\textwidth]{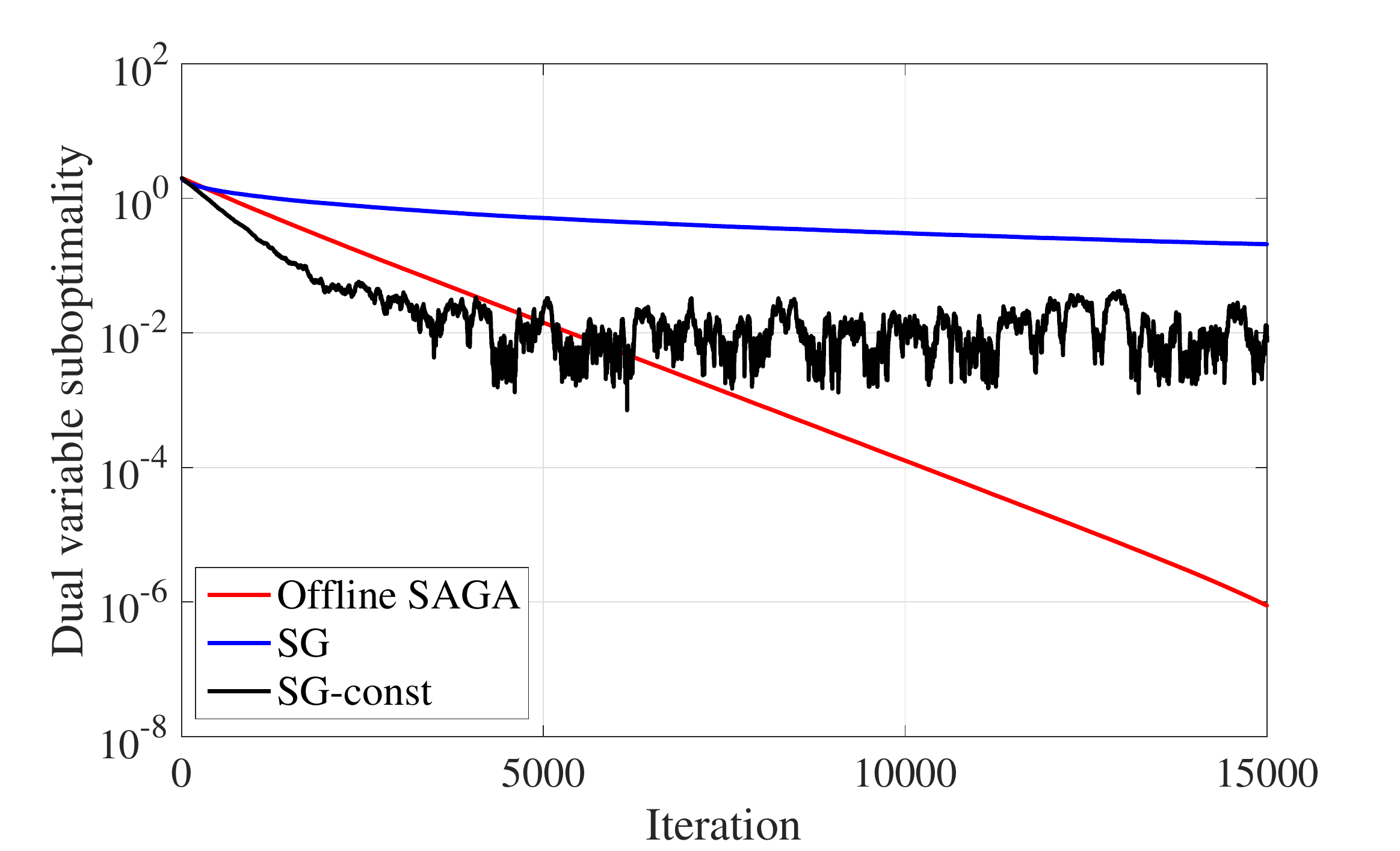}
\vspace{-0.6cm}
\caption{The dual sub-optimality $\|\bm{\lambda}_k-\bm{\lambda}_{\cal S}^*\|/\|\bm{\lambda}_{\cal S}^*\|$ vs. number of iterations for SAGA with $\eta=1/(3L)$, and SG with constant stepsize $\eta=0.2$ and diminishing stepsize $\eta_k=1/\sqrt{k}$. We consider 100 state samples $\{\mathbf{s}_n\}_{n=1}^{100}$ for a cloud network with 4 DCs and 4 MNs. The details about the distribution for generating $\mathbf{s}_n:=\{\alpha_{i,n},e_{i,n},P_{i,n}^{\rm r},v_{j,n},\forall i,j\}$ are provided in Section \ref{sec.Num}.
SAGA is the only method that converges linearly to the optimal argument.}
\label{Fig.convg}
\vspace{-0.4cm}
\end{figure}

In the following theorem, we use the result in \cite{defazio2014} to show that the offline SAGA method is linearly convergent.


\begin{theorem}\label{the.linear-rate}
Consider the offline SAGA iteration in \eqref{eq.saga}, and assume that the conditions in (as2)-(as4) are satisfied. If $\bm{\lambda}^*_{\cal S}$ denotes the unique optimal argument in \eqref{eq.dual-prob2}, and the stepsize is chosen as $\eta=1/(3L)$ with Lipschitz constant $L$ as in {(as4)}, then SAGA iterates initialized with $\bm{\lambda}_0$ satisfy
\begin{subequations}
	\begin{align}\label{eq.linear-rate}
			\mathbb{E}_{\nu}\|\bm{\lambda}_k-\bm{\lambda}^*_{\cal S}\|^2\leq(\Gamma_N)^kC_{\cal S}(\bm{\lambda}_0)
\end{align}
where $\Gamma_N\!:=\!1-\min(\frac{1}{4N},\frac{1}{3\kappa})$ with $\kappa$ denoting the  condition number in (as4), the expectation $\mathbb{E}_{\nu}$ is over all choices of the sample index $\nu(k)$ up to iteration $k$, and constant $C_{\cal S}(\bm{\lambda}_0)$ is
  \begin{align}\label{eq.linear-rate_2}
C_{\cal S}(\bm{\lambda}_0)&:=\|\bm{\lambda}_0-\bm{\lambda}^*_{\cal S}\|^2
\\&
\
-\frac{2N}{3L}\!\left[\hat{{\cal D}}_{\!\cal S}(\bm{\lambda}_0)-\hat{{\cal D}}_{\cal S}(\bm{\lambda}^*_{\cal S})-\langle \nabla\hat{{\cal D}}_{\cal S}(\bm{\lambda}^*_{\cal S}),\bm{\lambda}_0-\bm{\lambda}^*_{\cal S}\rangle\right].\nonumber
	\end{align}
\end{subequations}
\end{theorem}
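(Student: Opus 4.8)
The plan is to reduce the convergence analysis to the unconstrained SAGA result of \cite{defazio2014} by exploiting two features of the feasible set $\{\bm{\lambda}\geq\mathbf{0}\}$: the Euclidean projection $[\,\cdot\,]^{+}$ onto it is non-expansive, and the maximizer $\bm{\lambda}_{\cal S}^*$ is a fixed point of the projected gradient map for every stepsize. Before invoking \cite{defazio2014} I would check that (as2)--(as4) furnish exactly the regularity that proof requires of the summands: each $\hat{{\cal D}}_n$ is $\epsilon$-strongly concave with an $L$-Lipschitz gradient (this is (as4) read with the training sample $\mathbf{s}_n$ in place of $\mathbf{s}_t$, inherited from the strong convexity and $\tilde L$-smoothness of $\Psi_n$ in (as2)), so that $-\hat{{\cal D}}_n$ plays the role of an $\epsilon$-strongly convex, $L$-smooth component with condition number $\kappa=L/\epsilon$, and $-\hat{{\cal D}}_{\cal S}$ that of the finite-sum objective being minimized in \eqref{eq.dual-prob2}.

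Next I would establish the fixed-point identity $\bm{\lambda}_{\cal S}^*=[\bm{\lambda}_{\cal S}^*+\eta\nabla\hat{{\cal D}}_{\cal S}(\bm{\lambda}_{\cal S}^*)]^{+}$, which holds for all $\eta>0$ by the KKT/optimality conditions of the concave program \eqref{eq.dual-prob2}. Because $[\,\cdot\,]^{+}$ is non-expansive and $\bm{\lambda}_{k+1}=[\bm{\lambda}_k+\eta\mathbf{g}_k(\bm{\lambda}_k)]^{+}$, differencing the two projections yields $\|\bm{\lambda}_{k+1}-\bm{\lambda}_{\cal S}^*\|^2\leq\|\bm{\lambda}_k-\bm{\lambda}_{\cal S}^*+\eta(\mathbf{g}_k(\bm{\lambda}_k)-\nabla\hat{{\cal D}}_{\cal S}(\bm{\lambda}_{\cal S}^*))\|^2$, i.e., the projection is absorbed at no cost and we are back to the unconstrained iterate recursion. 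Combined with the unbiasedness $\mathbb{E}_{\nu}[\mathbf{g}_k(\bm{\lambda}_k)]=\nabla\hat{{\cal D}}_{\cal S}(\bm{\lambda}_k)$ established right after \eqref{eq.saga} and the stored-gradient structure of $\mathbf{g}_k$ in \eqref{eq.saga_grad}, this is precisely the descent inequality that drives the SAGA proof.

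Then I would run the Lyapunov argument of \cite{defazio2014}. Introduce the potential $T_k:=\frac{1}{N}\sum_{n=1}^{N}\big[\hat{{\cal D}}_n(\bm{\lambda}_{\cal S}^*)-\hat{{\cal D}}_n(\bm{\lambda}_{k[n]})+\langle\nabla\hat{{\cal D}}_n(\bm{\lambda}_{\cal S}^*),\bm{\lambda}_{k[n]}-\bm{\lambda}_{\cal S}^*\rangle\big]+c\,\|\bm{\lambda}_k-\bm{\lambda}_{\cal S}^*\|^2$, with the constant $c$ of \cite{defazio2014}, which for $\eta=1/(3L)$ equals $3L/(2N)$; concavity of each $\hat{{\cal D}}_n$ makes every bracketed term nonnegative, hence $T_k\geq c\,\|\bm{\lambda}_k-\bm{\lambda}_{\cal S}^*\|^2\geq0$. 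Taking the conditional expectation over the draw $\nu(k)$, expanding the squared-norm recursion above, bounding $\mathrm{var}(\mathbf{g}_k)$ through $L$-smoothness of the $\hat{{\cal D}}_n$, and using $\epsilon$-strong concavity to produce the contraction term, one gets $\mathbb{E}[T_{k+1}\mid \nu(0),\dots,\nu(k-1)]\leq\big(1-\min(\tfrac{1}{4N},\tfrac{1}{3\kappa})\big)T_k=\Gamma_N T_k$ exactly as in \cite{defazio2014}. Iterating and using the initialization $k[n]=0$ for all $n$ (so $\bm{\lambda}_{k[n]}=\bm{\lambda}_0$), which collapses the sum and gives $T_0/c=C_{\cal S}(\bm{\lambda}_0)$ with $C_{\cal S}(\bm{\lambda}_0)$ as in \eqref{eq.linear-rate_2}, yields $\mathbb{E}_{\nu}\|\bm{\lambda}_k-\bm{\lambda}_{\cal S}^*\|^2\leq (1/c)\,\mathbb{E}_{\nu}[T_k]\leq(\Gamma_N)^k C_{\cal S}(\bm{\lambda}_0)$, which is \eqref{eq.linear-rate}.

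The main obstacle is not conceptual but one of careful translation: keeping the sign conventions consistent when passing from minimization of the convex finite sum in \cite{defazio2014} to maximization of the concave $\hat{{\cal D}}_{\cal S}$, so that $C_{\cal S}(\bm{\lambda}_0)$ emerges with the stated form and is nonnegative --- the latter following from concavity of $\hat{{\cal D}}_{\cal S}$, which forces the bracket in \eqref{eq.linear-rate_2} to be nonpositive --- and confirming that the non-expansiveness step does not perturb the variance bookkeeping inside the Defazio et al. potential-function recursion. Everything else is the standard SAGA contraction estimate instantiated with $\eta=1/(3L)$ and the condition number $\kappa$ of (as4).
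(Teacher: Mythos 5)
Your proposal is correct and follows essentially the same route as the paper's own (sketched) proof in Appendix B: both reduce the constrained iteration to the composite/prox form of SAGA in Defazio et al.\ by recognizing $[\cdot]^{+}$ as the proximal operator of the indicator of $\{\bm{\lambda}\geq\mathbf{0}\}$, using its non-expansiveness together with the fixed-point property of $\bm{\lambda}^*_{\cal S}$, and then running the standard Lyapunov contraction with $\eta=1/(3L)$. Your write-up simply fills in the potential-function bookkeeping (correctly recovering $C_{\cal S}(\bm{\lambda}_0)$ as $T_0/c$ with $c=3L/(2N)$) that the paper leaves implicit.
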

\begin{proof}
See Appendix \ref{app.linear-rate}.
\end{proof}
Since $\Gamma_N<1$, Theorem \ref{the.linear-rate} asserts that the sequence $\{\bm{\lambda}_k\}$ generated by SAGA converges exponentially to the empirical optimum $\bm{\lambda}^*_{\cal S}$ in mean-square.
Similar to ${\cal D}_t(\bm{\lambda})$ in {(as4)}, if $\{\hat{{\cal D}}_n(\bm{\lambda})\}_{n=1}^N$ in \eqref{eq.dual-func2} are $L$-smooth, then so is $\hat{{\cal D}}_{{\cal S}}(\bm{\lambda})$, and the sub-optimality gap induced by $\hat{{\cal D}}_{{\cal S}}(\bm{\lambda}_k)$ can be bounded by
\begin{equation}\label{eq.linear-fun}
	\mathbb{E}_{\nu}[\hat{{\cal D}}_{{\cal S}}(\bm{\lambda}^*_{\cal S})\!-\!\hat{{\cal D}}_{{\cal S}}(\bm{\lambda}_k)]\!\leq\! L\mathbb{E}_{\nu}\|\bm{\lambda}_k-\bm{\lambda}^*_{\cal S}\|^2\!\leq\! (\Gamma_N)^k LC_{\cal S}(\bm{\lambda}_0).\!
\end{equation}
Remarkably, our offline SAGA based on training data is able to obtain the order-optimal convergence rate among all first-order approaches at the cost of only a single gradient evaluation per iteration. We illustrate the convergence of offline SAGA with two SG variants in Fig. \ref{Fig.convg} for a simple example.
As we observe, SAGA converges to the optimal argument linearly, while the SG method with diminishing stepsize has a sublinear convergence rate and the one with constant stepsize converges to a neighborhood of the optimal solution.


\begin{remark}
The stepsize in Theorem \ref{the.linear-rate} requires knowing the Lipschitz constant of \eqref{eq.dual-func2}.
In our context, it is given by $L=\rho(\mathbf{A}^{\top}\mathbf{A})/\sigma$, with $\rho(\mathbf{A}^{\top}\mathbf{A})$ denoting the spectral radius of the matrix $\mathbf{A}^{\top}\mathbf{A}$, and $\sigma$ defined in (as2).
When $L$ can be properly approximated in practice, it is worth mentioning that the linear convergence rate of offline SAGA in Theorem \ref{the.linear-rate} can be established under a wider range of stepsizes, with slightly different constants $\Gamma_N$ and $C_{\cal S}(\bm{\lambda}_0)$ \cite[Theorem 1]{defazio2014}.
\end{remark}

\vspace{-0.2cm}

\subsection{Learn-and-adapt via online SAGA}\label{subsec.onSAGA}
Offline SAGA is clearly suited for learning from (even massive) training datasets available in batch format, and it is tempting to carry over our training-based iterations to the operational phase as well.
However, different from common machine learning tasks, the training-to-testing procedure is no longer applicable here, since the online algorithm must also track system dynamics to ensure the stability of queue lengths in our network setup.
Motivated by this observation,
we introduce a novel data-driven approach called \textit{online} SAGA, which incorporates the benefits of batch training to mitigate online stochasticity, while preserving the adaptation capability.

Unlike the offline SAGA iteration that relies on a fixed training set, here we deal with a dynamic learning task where the training set grows in each time slot $t$. Running sufficiently many SAGA iterations for each empirical dual problem per slot could be computationally expensive. Inspired by the idea of dynamic SAGA with a fixed training set \cite{Daneshmand16}, the crux of our online SAGA in the operational phase is to learn from the dynamically growing training set at an affordable computational cost. This allows us to obtain a reasonably accurate solution at an affordable computational cost - only a few SAGA iterations.

To start, it is useful to recall a learning performance metric that uniformly bounds the difference between the empirical loss
in \eqref{eq.dual-func2} and the ensemble loss in \eqref{eq.dual-func} with high probability (w.h.p.), namely \cite{vapnik2013}
\begin{equation}\label{ineq.gene}
	\sup_{\bm{\lambda}\geq \mathbf{0}}|{\cal D}(\bm{\lambda})-\hat{{\cal D}}_{{\cal S}}(\bm{\lambda})|\leq {\cal H}_s(N)
\end{equation}
where ${\cal H}_s(N)$ denotes a bound on the statistical error induced by the finite size $N$ of the training set ${\cal S}$.
Under proper (so-termed mixing) conditions, the law of large numbers guarantees that ${\cal H}_s(N)$ is in the order of ${\cal O}(\sqrt{1/N})$, or, ${\cal O}(1/N)$ for specific function classes \cite[Section 3.4]{vapnik2013}.
That the statement in \eqref{ineq.gene} holds with w.h.p. means that there exists a constant $\delta>0$ such that \eqref{ineq.gene} holds with probability at least $1-\delta$. In such case, the statistical accuracy ${\cal H}_s(N)$ depends on $\ln(1/\delta)$, but we keep that dependency implicit to simplify notation \cite{vapnik2013,bousquet2008}.

Let $\bm{\lambda}_{KN}$ be an ${\cal H}_o(KN)$-optimal solution obtained by the offline SAGA over the $N$ training samples, where ${\cal H}_o(KN)$ will be termed optimization error emerging due to e.g., finite iterations say on average $K$ per sample; that is, $\hat{{\cal D}}_{{\cal S}}(\bm{\lambda}_{\cal S}^*)-\hat{{\cal D}}_{{\cal S}}(\bm{\lambda}_{KN})\leq {\cal H}_o(KN)$, where $\bm{\lambda}_{\cal S}^*$ is the optimal argument in \eqref{eq.dual-prob2}.
Clearly, the \textit{overall learning error}, defined as the difference between the empirical loss with $\bm{\lambda}=\bm{\lambda}_{KN}$ and the ensemble loss with $\bm{\lambda}=\bm{\lambda}^*$, is bounded above w.h.p. as
 \begin{align}\label{eq.overall-error}
 	{\cal D}(\bm{\lambda}^*)\!-\!\hat{{\cal D}}_{{\cal S}}(\bm{\lambda}_{KN})=&{\cal D}(\bm{\lambda}^*)\!-\!\hat{{\cal D}}_{{\cal S}}(\bm{\lambda}^*)\!+\!\hat{{\cal D}}_{{\cal S}}(\bm{\lambda}^*)\!-\!\hat{{\cal D}}_{{\cal S}}(\bm{\lambda}_{KN})\nonumber\\
 	\leq &{\cal D}(\bm{\lambda}^*)\!-\!\hat{{\cal D}}_{{\cal S}}(\bm{\lambda}^*)\!+\!\hat{{\cal D}}_{{\cal S}}(\bm{\lambda}_{{\cal S}}^*)\!-\!\hat{{\cal D}}_{{\cal S}}(\bm{\lambda}_{KN})\nonumber\\
     \leq &{\cal H}_s(N)\!+\!{\cal H}_o(KN)
 \end{align}
where the bound superimposes the optimization error with the statistical error.
If $N$ is relatively small, ${\cal H}_s(N)$ will be large, and keeping the per sample iterations $K$ small is reasonable for reducing complexity, but also because ${\cal H}_o(KN)$ stays comparable to ${\cal H}_s(N)$. With the dynamically growing training set ${\cal S}_t\!:=\!{\cal S}_{t-1}\cup \mathbf{s}_t$ in the operational phase, our online SAGA will aim at a ``sweet-spot'' between affordable complexity (controlled by $K$), and desirable overall learning error that is bounded by ${\cal H}_s(N_t)+{\cal H}_o(KN_t)$ with $N_t=|{\cal S}_t|$.


\begin{algorithm}[t]
\caption{Online SAGA for learning-while-adapting}\label{algo2}
\begin{algorithmic}[1]
\State \textbf{Offline initialize:} batch training set ${\cal S}_{\rm off}:=\{\mathbf{s}_n\}_{n=1}^{N_{\rm off}}$; initial iterate $\bm{\lambda}_0(0)$; iteration index $k[n]=0,\,\forall n$, and stepsize $\eta=1/(3L)$.

\State \textbf{Offline learning:} run offline SAGA in Algorithm \ref{algo} for $KN_{\rm off}$ iterations using historical samples in ${\cal S}_{\rm off}$.
\State \textbf{Online initialize:} hot start with $\bm{\lambda}_0(1)=\bm{\lambda}_{KN_{\rm off}}(0)$, $\{\nabla\hat{{\cal D}}_n(\bm{\lambda}_{k[n]}(0))\}_{n=1}^{N_{\rm off}}$ from the offline SAGA output using ${\cal S}_0={\cal S}_{\rm off}$; queue length $\mathbf{q}_0$; control variables $\mu>0$, and $\mathbf{b}:=[\sqrt{\mu}\log^2(\mu),\dots,\sqrt{\mu}\log^2(\mu)]^{\top}$.
\For {$t=1,2\dots$}
\State \textbf{Online adaptation:}
\State Compute effective multiplier using \eqref{eq.effect-dual}; acquire $\mathbf{s}_t$; and use $\bm{\gamma}_t$ to obtain $\mathbf{x}_t$ as in \eqref{close-up1}-\eqref{close-up2}, or by solving \eqref{eq.real-time}.
\State Update $\mathbf{q}_{t+1}$ using \eqref{W-delay} and \eqref{Q-delay}.
\State \textbf{Online learning:}
\State Form ${\cal S}_t\!:=\!{\cal S}_{t-1}\cup \mathbf{s}_t$ with cardinality $N_t\!=\!N_{t-1}\!+\!1$, and initialize the gradient entry for $\mathbf{s}_t$.
\State Initialized by $\bm{\lambda}_t$ and stored gradients, run $K$ offline SAGA iterations in Algorithm \ref{algo} with sample randomly drawn from set ${\cal S}_t$ with $N_t$ samples, and output $\bm{\lambda}_{t+1}$.
\EndFor
\end{algorithmic}
\end{algorithm}

The proposed online SAGA method consists of two complementary stages: \textit{offline learning} and \textit{online learning and adaptation}.
In the offline training-based learning, it runs $KN_{\rm off}$ SAGA iteration \eqref{eq.saga} (cf. Algorithm \ref{algo}) on a batch dataset ${\cal S}_{\rm off}$ with $N_{\rm off}$ historical samples,  and on average $K$ iterations per sample.
In the online operational phase, initialized with the offline output $\bm{\lambda}_0(1)=\bm{\lambda}_{KN_{\rm off}}(0)$,\footnote{To differentiate offline and online iterations, let $\bm{\lambda}_{k}(0)$ denote $k$-th iteration in the offline learning, and $\bm{\lambda}_k(t),\,t\geq 1$ denote $k$-th iteration during slot $t$.} online SAGA continues the learning process by storing a ``hot'' gradient collection as in Algorithm \ref{algo}, and also adapts to the queue length $\mathbf{q}_t$ that plays the role of an instantaneous multiplier estimate analogous to $\check{\bm{\lambda}}_t=\mu\mathbf{q}_t$ in \eqref{eq.dual-stocg}.
Specifically, online SAGA keeps acquiring data $\mathbf{s}_t$ and growing the training set ${\cal S}_t$ based on which it learns $\bm{\lambda}_t$ online by running $K$ iterations \eqref{eq.saga} per slot $t$.
The last iterate of slot $t-1$ initializes the first one of slot $t$; that is, $\bm{\lambda}_0(t)=\bm{\lambda}_K(t-1)$.
The learned $\bm{\lambda}_0(t)$ captures a priori information ${\cal S}_{t-1}$.
To account for instantaneous state information as well as actual constraint violations which are valuable for adaptation, online SAGA will superimpose $\bm{\lambda}_0(t)$ to the queue length (multiplier estimate) $\mathbf{q}_t$.
To avoid possible bias $\mathbf{b}\in\mathbb{R}^I_+$ in the steady-state, this superposition is bias-corrected, leading to an effective multiplier estimate (with short-hand notation $\bm{\lambda}_t:=\bm{\lambda}_0(t)$)
\begin{equation}\label{eq.effect-dual}
	\underbrace{~~~~\bm{\gamma}_t~~~~}_{\rm effective~multiplier}=\underbrace{~~~~~\bm{\lambda}_t~~~~~}_{\rm statistical~learning}+\underbrace{~~~\mu\mathbf{q}_t-\mathbf{b}~~~}_{\rm stochastic~ approx.}
\end{equation}
where scalar $\mu$ tunes emphasis on past versus present state information, and the value of constant $\mathbf{b}$ will be specified in Theorem \ref{gap-onlineSAGA}.
Based on the effective dual variable, the primal variables are obtained as in \eqref{close-sol} with $\bm{\lambda}^*$ replaced by $\bm{\gamma}_t$; or, for general constrained optimization problems, as
\begin{equation}\label{eq.real-time}
	\mathbf{x}_t(\bm{\gamma}_t):=\arg\min_{\mathbf{x}_t \in {\cal X}}{\cal L}_t(\mathbf{x}_t,\bm{\gamma}_t).
\end{equation}
The necessity of combining statistical learning and stochastic approximation in \eqref{eq.effect-dual} can be further understood from obeying feasibility of the original problem \eqref{eq.prob}; e.g., constraints \eqref{eq.probm} and \eqref{eq.probn}.
Note that in the adaptation step of online SAGA, the variable $\mathbf{q}_t$ is updated according to \eqref{eq.probm}, and the primal variable $\mathbf{x}_t$ is obtained as the minimizer of the Lagrangian using the effective multiplier $\bm{\gamma}_t$, which accounts for the queue variable $\mathbf{q}_t$ via \eqref{eq.effect-dual}.
Assuming that $\mathbf{q}_t$ is approaching infinity, the variable $\bm{\gamma}_t$ becomes large, and, therefore, the primal allocation $\mathbf{x}_t(\bm{\gamma}_t)$ obtained by minimizing online Lagrangian \eqref{eq.real-time} with $\bm{\lambda}=\bm{\gamma}_t$ will make the penalty term $\mathbf{A}\mathbf{x}_t+\mathbf{c}_t$ negative. Therefore, it will lead to decreasing $\mathbf{q}_{t+1}$ through the recursion in \eqref{eq.probm}. This mechanism ensures that the constraint \eqref{eq.probn} will not be violated.\footnote{Without \eqref{eq.effect-dual}, for a finite time $t$, $\bm{\lambda}_t$ is always near-optimal for the ensemble problem \eqref{eq.dual-prob}, and the primal variable $\mathbf{x}_t(\bm{\lambda}_t)$ in turn is near-feasible w.r.t.  \eqref{eq.reforme1} that is necessary for \eqref{eq.probn}. Since $\mathbf{q}_t$ essentially accumulates online constraint violations of \eqref{eq.reforme1}, it will grow linearly with $t$ and eventually become unbounded.}
The online SAGA is listed in Algorithm \ref{algo2}.

\begin{remark}[\textbf{Connection with existing algorithms}]
Online SAGA has similarities and differences with common schemes in machine learning, and stochastic network optimization.

(P1) SDG in \cite{gatsis2010,chen2016} can be viewed as a special case of online SAGA, which purely relies on stochastic estimates of multipliers or instantaneous queue lengths, meaning $\bm{\gamma}_t=\mu\mathbf{q}_t$. In contrast, online SAGA further learns by stochastic averaging over (possibly big) data to improve convergence of SDG.

(P2) Several schemes have been developed for statistical learning via large-scale optimization, including  SG, SAG \cite{roux2012}, SAGA \cite{defazio2014}, dynaSAGA \cite{Daneshmand16}, and AdaNewton \cite{mokhtari2016}. However, these are generally tailored for \textit{static} large-scale optimization, and are not suitable for dynamic resource allocation, because having $\bm{\gamma}_t=\bm{\lambda}_t$ neither tracks constraint violations nor adapts to state changes, which is critical to ensure queue stability \eqref{eq.probn}.

(P3) While online SAGA shares the learning-while-adapting attribute with the methods in \cite{huang2014} and \cite{zhang2016}, these approaches generally require using histogram-based pdf estimation, and solve the resultant empirical dual problem per slot exactly. However, histogram-based estimation is not tractable for our high-dimensional continuous random vector; and when the dual problem cannot be written in an explicit form, solving it exactly per iteration is computationally expensive.
\end{remark}	

\begin{remark}[\textbf{Computational complexity}]

Considering the dimension of $\bm{\lambda}$ as $(I+J)\times 1$, the dual update in \eqref{eq.dual-stocg} has the computational complexity of ${\cal O}((I+J)\cdot\max\{I,J\})$, since each row of $\mathbf{A}$ has at most $\max\{I,J\}$ non-zero entries. As the dimension of $\mathbf{x}_t$ is $(IJ+I)\times 1$, the primal update in \eqref{eq.SA-sub}, in general, requires solving a convex program with the worst-case complexity of  ${\cal O}(I^{3.5}J^{3.5})$ using a standard interior-point solver.
Overall, the per-slot computational complexity of the Lyapunov optimization or SDG in \cite{neely2010,gatsis2010} is ${\cal O}(I^{3.5}J^{3.5})$.

Online SAGA has a comparable complexity with that of SDG during online adaptation phase, where the effective multiplier update, primal update and queue update also incur a worst-case complexity ${\cal O}(I^{3.5}J^{3.5})$ in total.
Although the online learning phase is composed of additional $K$ updates of the empirical multiplier $\bm{\lambda}_t$ through \eqref{eq.saga}, we only need to compute one gradient $\nabla\hat{{\cal D}}_{\nu}(\bm{\lambda}_t)$ at a randomly selected sample $\mathbf{s}_{\nu}$.
Hence, per-iteration complexity is determined by that of solving \eqref{eq.SA-sub}, and it is independent of the data size $N_t$.
Clearly, subtracting stored gradient $\nabla\hat{{\cal D}}_n(\bm{\lambda}_{k[n]})$ only incurs ${\cal O}(I+J)$ computations, and the last summand in \eqref{eq.saga_grad} can be iteratively computed using running averages.
 In addition, for each new $\mathbf{s}_t$ per slot, finding the initial gradient requires another ${\cal O}(I^{3.5}J^{3.5})$ computations.
Therefore, the overall complexity of online SAGA is still ${\cal O}(I^{3.5}J^{3.5})$, but with a (hidden) multiplicative constant $K+2$.
\end{remark}


\section{Optimality and Stability of Online SAGA}\label{sec:analysis}
Similar to existing online resource allocation schemes, analyzing the performance of online SAGA is challenging. In this section, we will first provide performance analysis for the online learning phase, upon which we assess performance of our novel resource allocation approach in terms of average cost and queue stability.
For the online learning phase of online SAGA, we establish an upper-bound for the optimization error as follows.

\begin{lemma}\label{error-rec}
Consider the proposed online SAGA in Algorithm \ref{algo2}, and suppose the conditions in (as1)-(as4) are satisfied. Define the statistical error bound ${\cal H}_s(N)=dN^{-\beta}$ where $d>0$ and $\beta\in (0,1]$. If we select $K\geq 6$ and $N_{\rm off}=3\kappa/4$, the optimization error  at time slot $t$ is bounded in the mean by
\begin{equation}\label{ineq.error-rec}
\small
	\mathbb{E}\left[\hat{{\cal D}}_{{\cal S}_t}(\bm{\lambda}_{{\cal S}_t}^*)\!-\!\hat{{\cal D}}_{{\cal S}_t}(\bm{\lambda}_t)\right]\leq c(K){\cal H}_s(N_t)+\frac{\xi}{e^{K/5}}\Big(\frac{3\kappa}{4N_t}\Big)^{\!\!\frac{K}{5}}
\end{equation}
where the expectation is over all source of randomness during the optimization; constants are defined as $N_t:=N_{\rm off}+t$ and $c(K):= 4/(K-5)$; and $\xi$ is the initial error of $\bm{\lambda}_0$  in \eqref{eq.linear-rate}, i.e., $\xi:=\mathbb{E}[C_{{\cal S}_{\rm off}}(\bm{\lambda}_0)]$ with expectation over the choice of ${\cal S}_{\rm off}$.
\end{lemma}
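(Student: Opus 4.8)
The plan is to convert the offline guarantee of Theorem~\ref{the.linear-rate} into a per-slot recursion for the optimization error $e_t:=\mathbb{E}[\hat{{\cal D}}_{{\cal S}_t}(\bm{\lambda}_{{\cal S}_t}^*)-\hat{{\cal D}}_{{\cal S}_t}(\bm{\lambda}_t)]$ and then unroll it. The first thing I would record is the single consequence of the parameter choice that drives everything: since $N_t=N_{\rm off}+t\geq 3\kappa/4$ for all $t\geq 0$, we have $\tfrac1{4N_t}\leq\tfrac1{3\kappa}$, hence $\Gamma_{N_t}=1-\tfrac1{4N_t}$ throughout --- this is exactly why $N_{\rm off}=3\kappa/4$ is imposed. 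Specialising \eqref{eq.linear-fun} to the offline run on ${\cal S}_0={\cal S}_{\rm off}$ with $KN_{\rm off}$ iterations then supplies the base case, $e_0\leq(1-\tfrac1{4N_{\rm off}})^{KN_{\rm off}}\,\mathbb{E}[C_{{\cal S}_{\rm off}}(\bm{\lambda}_0)]\leq e^{-K/4}\,\xi\leq e^{-K/5}\xi$ (absorbing the constant $L$), which already matches the decaying term of the claim at $t=0$.

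The core is the one-slot inequality. Going from slot $t-1$ to slot $t$, two things happen: (a) the set is enlarged, ${\cal S}_{t-1}\mapsto{\cal S}_t={\cal S}_{t-1}\cup\{\mathbf{s}_t\}$, moving the reference function to $\hat{{\cal D}}_{{\cal S}_t}$ and the optimum to $\bm{\lambda}_{{\cal S}_t}^*$; (b) online SAGA runs $K$ Algorithm~\ref{algo} iterations on ${\cal S}_t$, warm-started at $\bm{\lambda}_{t-1}$ \emph{and} at the gradient table inherited from slot $t-1$ (with one fresh entry appended for $\mathbf{s}_t$). For (a) I would use the uniform generalisation bound \eqref{ineq.gene}: it gives $|\hat{{\cal D}}_{{\cal S}_{t-1}}(\bm{\lambda})-\hat{{\cal D}}_{{\cal S}_t}(\bm{\lambda})|\leq{\cal H}_s(N_{t-1})+{\cal H}_s(N_t)$ uniformly on $\bm{\lambda}\geq\mathbf{0}$, whence, with $\epsilon$-strong concavity (as4), $\hat{{\cal D}}_{{\cal S}_t}(\bm{\lambda}_{{\cal S}_t}^*)-\hat{{\cal D}}_{{\cal S}_t}(\bm{\lambda}_{t-1})\leq e_{t-1}+2{\cal H}_s(N_{t-1})+2{\cal H}_s(N_t)$, and also $\|\bm{\lambda}_{{\cal S}_t}^*-\bm{\lambda}_{{\cal S}_{t-1}}^*\|={\cal O}(1/(\epsilon N_t))$ (perturbing one of $N_t$ summands of a strongly concave average moves its maximiser by only ${\cal O}(1/(\epsilon N_t))$). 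Feeding these into the SAGA potential $\tfrac1N\sum_n[\hat{{\cal D}}_n(\bm{\lambda}^*)-\hat{{\cal D}}_n(\phi_n)-\langle\nabla\hat{{\cal D}}_n(\bm{\lambda}^*),\phi_n-\bm{\lambda}^*\rangle]+c_N\|\bm{\lambda}-\bm{\lambda}^*\|^2$ that underlies Theorem~\ref{the.linear-rate}, and taking expectation over the fresh i.i.d.\ sample $\mathbf{s}_t$ (as1) to control the appended table entry, I expect the enlargement to cost a factor $(1+{\cal O}(1/N_t))$ plus an additive ${\cal O}({\cal H}_s(N_t)/N_t)$. For (b), Theorem~\ref{the.linear-rate}/\eqref{eq.linear-fun} on the now-fixed set ${\cal S}_t$ contracts the potential by $(\Gamma_{N_t})^K=(1-\tfrac1{4N_t})^K$. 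Chaining (a) and (b) and using $(1-\tfrac1{4N_t})^K(1+{\cal O}(1/N_t))\leq 1-\tfrac{K}{5N_t}$ for $K\geq6$ (this is precisely where the threshold $K\ge6$ is forced), I obtain $e_t\leq(1-\tfrac{K}{5N_t})\,e_{t-1}+c'\,{\cal H}_s(N_t)/N_t$.

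Unrolling is then routine. The homogeneous part telescopes, $\prod_{s=1}^{t}(1-\tfrac{K}{5N_s})\leq\exp\!\big(-\tfrac K5\sum_{s=1}^t\tfrac1{N_s}\big)\leq\big(\tfrac{N_{\rm off}}{N_t}\big)^{K/5}=\big(\tfrac{3\kappa}{4N_t}\big)^{K/5}$, and multiplied by $e_0\leq e^{-K/5}\xi$ this yields the stated $\tfrac{\xi}{e^{K/5}}\big(\tfrac{3\kappa}{4N_t}\big)^{K/5}$. The forced part is $\sum_{s=1}^t\big[\prod_{r=s+1}^t(1-\tfrac{K}{5N_r})\big]\tfrac{c'{\cal H}_s(N_s)}{N_s}\leq c'\sum_{s=1}^t\big(\tfrac{N_s}{N_t}\big)^{K/5}\tfrac{d\,N_s^{-\beta}}{N_s}$, and comparing the sum with $\int_{N_{\rm off}}^{N_t} x^{-1-\beta}(x/N_t)^{K/5}\,dx$ gives ${\cal O}\!\big(\tfrac{d}{K-5}N_t^{-\beta}\big)$, which with $c'\leq\tfrac45$ is exactly $\tfrac4{K-5}{\cal H}_s(N_t)=c(K){\cal H}_s(N_t)$ (using $\beta\leq1$).

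The hard part is step (a)--(b): showing that a single slot --- merely $K$ stochastic-averaging updates --- keeps $e_t$ within ${\cal O}({\cal H}_s(N_t))$ of optimality although the sample set, the objective and the optimum all drift. The delicate point is that one must \emph{not} re-initialise the gradient table each slot: a cold restart would invoke Theorem~\ref{the.linear-rate} with $C_{{\cal S}_t}(\bm{\lambda}_{t-1})=\Theta(N_t\|\bm{\lambda}_{t-1}-\bm{\lambda}_{{\cal S}_t}^*\|^2)$, and $(1-\tfrac1{4N_t})^K\cdot\Theta(N_t)$ is not below $1$ for fixed $K$, so no contraction would survive. Carrying the warm table across slots and folding in the appended $\mathbf{s}_t$-entry without a multiplicative blow-up --- together with the accompanying constant-chasing that pins down $N_{\rm off}=3\kappa/4$, $K\geq6$ and $c(K)=4/(K-5)$ --- is where essentially all the work lies; the remainder is the generalisation bound \eqref{ineq.gene} plus bookkeeping.
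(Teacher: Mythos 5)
Your architecture matches the paper's: a base case from the offline linear-convergence bound \eqref{eq.linear-fun} (your $e_0\le e^{-K/5}\xi$ computation is exactly the paper's), a per-slot recursion combining a $K$-fold contraction on the enlarged set with an additive ``growth cost,'' and an unrolling that telescopes to $(3\kappa/(4N_t))^{K/5}$ plus a forced response of order ${\cal H}_s(N_t)$. The paper packages the recursion as a two-branch sequence ${\cal U}_o(k;n)$ and closes it by induction with Bernoulli-inequality bookkeeping, which is where $K\ge 6$ and $c(K)=4/(K-5)$ emerge; your version of that bookkeeping is looser but in the same spirit.

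The genuine gap is in your step (a), and it is exactly the step you flag as ``where essentially all the work lies.'' You need the enlargement ${\cal S}_{t-1}\mapsto{\cal S}_t$ to cost an \emph{additive} ${\cal O}({\cal H}_s(N_t)/N_t)$, but the only tool you actually deploy --- the uniform bound \eqref{ineq.gene} applied twice through the ensemble loss --- yields $\hat{{\cal D}}_{{\cal S}_t}(\bm{\lambda}^*_{{\cal S}_t})-\hat{{\cal D}}_{{\cal S}_t}(\bm{\lambda}_{t-1})\le e_{t-1}+{\cal O}({\cal H}_s(N_t))$, a factor of $N_t$ too large. With a per-slot cost of ${\cal O}({\cal H}_s(N_s))$ the forced response $\sum_{s\le t}(N_s/N_t)^{K/5}{\cal H}_s(N_s)$ behaves like $N_t^{1-\beta}$, not $N_t^{-\beta}$, so the lemma would not follow. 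Your deterministic perturbation bound $\|\bm{\lambda}^*_{{\cal S}_t}-\bm{\lambda}^*_{{\cal S}_{t-1}}\|={\cal O}(1/(\epsilon N_t))$ does not rescue this either: the direct decomposition $\hat{{\cal D}}_{{\cal S}_t}=\frac{N_{t-1}}{N_t}\hat{{\cal D}}_{{\cal S}_{t-1}}+\frac{1}{N_t}\hat{{\cal D}}_{N_t}$ only gives an additive ${\cal O}(1/N_t)$, whose discounted sum is $\Theta(1)$ rather than vanishing. The missing ingredient is the statistical growth lemma the paper imports from Daneshmand et al.\ (stated as Lemma~\ref{small-lemma5} in Appendix~\ref{app.B}): in \emph{expectation over the fresh i.i.d.\ samples}, an $\omega$-optimal iterate for $\check{\cal S}$ is $\bigl(\omega+\frac{n-m}{n}{\cal H}_s(m)\bigr)$-optimal for ${\cal S}\supset\check{\cal S}$ --- the statistical error of the old set weighted by the \emph{fraction} of new samples, which supplies precisely the $1/N_t$ factor. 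You assert the right per-slot cost (``I expect the enlargement to cost \dots an additive ${\cal O}({\cal H}_s(N_t)/N_t)$'') but the argument you sketch does not produce it, and the bound you do derive contradicts it; without this lemma the recursion you unroll is not established.
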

\begin{proof}
	See Appendix \ref{app.B}.	
\end{proof}

As intuitively expected, Lemma \ref{error-rec} confirms that for $N_{\rm off}$ fixed, increasing $K$ per slot will lower the optimization error in the mean; while for a fixed $K$, increasing $N_{\rm off}$ implies improved startup of the online optimization accuracy. However, solely increasing $K$ will incur higher computational complexity.
Fortunately, the next proposition argues that online SAGA can afford low-complexity operations.
\begin{proposition}\label{error-K=1}
Consider the assumptions and the definitions of $\xi$, ${\cal H}_s(N)$ and $\beta$ as those in Lemma \ref{error-rec}.
If we choose the average number of SAGA iterations per-sample $K=1$, and the batch size $N_{\rm off}=3\kappa/4$, the mean optimization error satisfies that
	\begin{equation}
	\mathbb{E}\!\left[\hat{{\cal D}}_{{\cal S}_t}(\bm{\lambda}_{{\cal S}_t}^*)\!-\!\hat{{\cal D}}_{{\cal S}_t}(\bm{\lambda}_t)\right]\!\!\leq\!c(1){\cal H}_s(N_t)\!+\xi \Big(\frac{6\kappa}{eN_t}\Big)^{\!\frac{8}{5}}\!
	\end{equation}
	with the constant defined as $c(1):=8^{\beta}\times11/6+4^{\beta}/2+2^{\beta-1}$.
\end{proposition}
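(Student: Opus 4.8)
\textbf{Proof proposal for Proposition \ref{error-K=1}.}

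The plan is to specialize the recursion-based argument behind Lemma \ref{error-rec} to the particular case $K=1$, where the constant $c(K)=4/(K-5)$ is undefined (indeed negative-valued near $K=5$) and the decay exponent $K/5$ is too small to make the argument of Lemma \ref{error-rec} go through verbatim. So rather than plugging $K=1$ into \eqref{ineq.error-rec}, I would revisit the error-propagation inequality that Appendix \ref{app.B} establishes slot-by-slot and re-sum it with $K=1$. Concretely, at slot $t$ the online SAGA performs a single SAGA step on the empirical problem over ${\cal S}_t$ with $N_t=N_{\rm off}+t$ samples, starting from $\bm{\lambda}_t=\bm{\lambda}_0(t)$, which was the output of the previous slot's single step. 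By Theorem \ref{the.linear-rate} applied with one iteration, the optimization error over ${\cal S}_t$ contracts by the factor $\Gamma_{N_t}=1-\min(1/(4N_t),1/(3\kappa))$, plus an additive perturbation term that accounts for (i) the change of the empirical objective when the new sample $\mathbf{s}_t$ is appended (going from $\hat{{\cal D}}_{{\cal S}_{t-1}}$ to $\hat{{\cal D}}_{{\cal S}_t}$) and (ii) the statistical fluctuation captured by ${\cal H}_s(N)$. Since $N_{\rm off}=3\kappa/4$ and $N_t\geq N_{\rm off}$ grows, we have $\min(1/(4N_t),1/(3\kappa))=1/(4N_t)$ for all $t\geq 0$, so $\Gamma_{N_t}=1-1/(4N_t)$.

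The key computation is then to unroll the one-step recursion
\[
E_{t}\ \leq\ \Big(1-\tfrac{1}{4N_t}\Big)E_{t-1}\ +\ (\text{perturbation at slot }t),
\]
where $E_t:=\mathbb{E}[\hat{{\cal D}}_{{\cal S}_t}(\bm{\lambda}_{{\cal S}_t}^*)-\hat{{\cal D}}_{{\cal S}_t}(\bm{\lambda}_t)]$. The homogeneous part telescopes to $\prod_{\tau=1}^{t}(1-1/(4N_\tau))\cdot E_0 \leq \exp(-\sum_{\tau=1}^{t} 1/(4N_\tau))\,\xi$; bounding the harmonic-type sum $\sum_{\tau} 1/(4N_\tau) \gtrsim \tfrac14\ln(N_t/N_{\rm off})$ yields a decay of order $(N_{\rm off}/N_t)^{1/4}$ for the transient term — which is weaker than in Lemma \ref{error-rec} and explains the $(6\kappa/(eN_t))^{8/5}$-type bound in the statement (here $6\kappa/e = N_{\rm off}\cdot 8/e$, consistent with $N_{\rm off}=3\kappa/4$, and the exponent $8/5$ should drop out of accumulating the per-slot factors carefully rather than from the crude $1/4$ bound — I would track the constant in $\sum 1/(4N_\tau)$ more sharply, or alternatively use the telescoping product form $\prod(1-1/(4N_\tau)) = \prod \frac{4N_\tau-1}{4N_\tau}$ and compare against a known product). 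For the inhomogeneous part, the perturbation at slot $\tau$ is itself ${\cal O}({\cal H}_s(N_\tau))$ plus the one-step optimization residual, and summing $\sum_{\tau=1}^t \prod_{\sigma>\tau}(1-1/(4N_\sigma))\cdot{\cal H}_s(N_\tau)$ against the geometric-like weights produces a bound of the form $c(1){\cal H}_s(N_t)$; the explicit constant $c(1)=8^{\beta}\cdot 11/6 + 4^{\beta}/2 + 2^{\beta-1}$ emerges by splitting the sum dyadically (blocks where $N_\tau\in[N_t/2^{j+1},N_t/2^j)$), on each block bounding ${\cal H}_s(N_\tau)=dN_\tau^{-\beta}\leq d(N_t/2^{j+1})^{-\beta} = 2^{(j+1)\beta}{\cal H}_s(N_t)$, and summing the resulting geometric series in $j$ with the contraction weights — the factors $8^\beta, 4^\beta, 2^{\beta}$ are exactly the $2^{(j+1)\beta}$ for $j=2,1,0$ and the rational coefficients $11/6, 1/2, 1/2$ come from the weighted block lengths.

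The main obstacle I anticipate is getting the constants exactly right in the dyadic summation, in particular reconciling the fact that with $K=1$ the per-slot contraction $1-1/(4N_\tau)$ is very mild, so the naive geometric-sum bound that works for $K\geq 6$ in Lemma \ref{error-rec} (where the effective ratio is $(3\kappa/(4N))^{K/5}$ with $K/5>1$) no longer converges; one must instead exploit that $N_\tau$ itself grows linearly in $\tau$, so the product $\prod_{\sigma=\tau+1}^{t}(1-1/(4N_\sigma))$ behaves like $(N_\tau/N_t)^{1/4}$, and then $\sum_\tau (N_\tau/N_t)^{1/4} N_\tau^{-\beta}$ must be estimated — this is where the dyadic decomposition is essential and where the precise value of $c(1)$ is pinned down. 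I would also need to verify that appending one sample changes $\bm{\lambda}_{{\cal S}}^*$ and the gradient table by a controlled amount (this is the standard stability-of-ERM estimate already used in Appendix \ref{app.B}, bounded by ${\cal O}(1/N_\tau)$ via strong concavity in (as4)), so that the per-slot perturbation is genuinely ${\cal O}({\cal H}_s(N_\tau))$ and not larger. Given Lemma \ref{error-rec}'s appendix does essentially this bookkeeping for general $K$, the proposition should follow by re-running that computation with $K=1$ and being more careful about the transient term, which no longer decays faster than the statistical term.
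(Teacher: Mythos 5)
Your instinct that Lemma \ref{error-rec} cannot simply be invoked with $K=1$ (since $c(K)=4/(K-5)$ degenerates and the induction there needs $K\geq 6$) is right, but the repair you propose --- unrolling the one-step recursion $E_t\leq(1-\tfrac{1}{4N_t})E_{t-1}+(\text{perturbation})$ directly --- cannot deliver the stated bound. As you yourself compute, the homogeneous part of that recursion telescopes to $\prod_\tau(1-\tfrac{1}{4N_\tau})\,\xi\approx(N_{\rm off}/N_t)^{1/4}\xi$, and the harmonic sum $\sum_\tau 1/(4N_\tau)=\tfrac14\ln(N_t/N_{\rm off})+{\cal O}(1)$ is tight, so no amount of ``tracking the constant more sharply'' will turn the exponent $1/4$ into the $8/5$ that appears in $(6\kappa/(eN_t))^{8/5}$. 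The transient decay rate is a genuine property of the bounding path you choose, and the path you chose gives $N_t^{-1/4}$. Likewise, your dyadic decomposition of the inhomogeneous sum is not pinned to anything that would reproduce the coefficient $11/6$, which is not a ``weighted block length'' but equals $c(8)+\tfrac12=\tfrac43+\tfrac12$.

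The paper's proof (Appendix D) takes a different and much shorter route: it keeps Lemma \ref{error-rec} as the workhorse but applies it with $K=8$, for which $c(8)=4/3$ and the transient is $\xi(3\kappa/(4eN))^{8/5}$, and then converts this into a $K=1$ statement by three applications of the set-growth bound of Lemma \ref{small-lemma5} with $m=n/2$ (i.e., the chain $\mathcal{U}_o(8N;N)\to\mathcal{U}_o(8N;2N)\to\mathcal{U}_o(8N;4N)\to\mathcal{U}_o(8N;8N)=\mathcal{U}_o(N';N')$ after renaming $N'=8N$). Each doubling costs an additive $\tfrac12{\cal H}_s(\cdot)$, and rewriting ${\cal H}_s(N)=2^{\beta}{\cal H}_s(2N)$ at each stage is exactly what produces $c(1)=8^{\beta}\cdot\tfrac{11}{6}+\tfrac{4^{\beta}}{2}+2^{\beta-1}$ and turns $3\kappa/(4e\cdot N_t/8)$ into $6\kappa/(eN_t)$ (the numerology $6\kappa/e=8N_{\rm off}/e$ you noticed is precisely the footprint of this factor-of-8 reduction). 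So the missing idea is the reduction of the $K=1$ bound to the already-proved $K\geq 6$ bound via repeated set-doubling, rather than a fresh unrolling of the recursion; without it, both the exponent $8/5$ and the exact value of $c(1)$ are out of reach. You would also need to justify, as the paper implicitly does, that the quantity $\mathcal{U}_o(k;n)$ defined by the min-recursion \eqref{eq.recur-U} upper-bounds the actual error along this alternative bounding path.
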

\begin{proof}
	See Appendix \ref{app.prop2}.	
\end{proof}
Proposition \ref{error-K=1} shows that when $N_t$ is sufficiently large, the optimization accuracy with only $K=1$ iteration per slot (per new datum) is on the same order of the statistical accuracy provided by the current training set ${\cal S}_t$. 	
Note that for other choices of $K\in\{2,\ldots,5\}$, constant $c(K)$ can be readily obtained by following the proof of Proposition \ref{error-K=1}.
In principle, online SAGA requires $N_{\rm off}$ batch samples and $K$ iterations per slot to ensure that the optimization error is close to statistical accuracy. In practice, one can even skip the batch learning phase since online SAGA can operate with a ``cold start.''
Simulations in Section \ref{sec.Num} will validate that without offline learning, online SAGA can still markedly outperform existing approaches.

Before analyzing resource allocation performance, convergence of the empirical dual variables is in order.

\begin{theorem}\label{emp-dual}
Consider the proposed online SAGA in Algorithm \ref{algo2}, and suppose the conditions in (as1)-(as4) are satisfied.
If $N_{\rm off}\geq 3\kappa/4$, and $K\geq 1$ per sample, the empirical dual variables of online SAGA satisfy
	\begin{equation}\label{eq.emp-dual}
		\lim_{t\rightarrow \infty} \bm{\lambda}_t=\bm{\lambda}^*,\quad {\rm w.h.p.}
	\end{equation}
	where $\bm{\lambda}^*$ denotes the optimal dual variable for the ensemble minimization in  \eqref{eq.dual-prob}, and w.h.p. entails a probability arbitrarily close but strictly less than $1$.
\end{theorem}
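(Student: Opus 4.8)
The plan is to reduce the assertion $\bm{\lambda}_t\to\bm{\lambda}^*$ to a statement about the ensemble dual suboptimality $\mathcal{D}(\bm{\lambda}^*)-\mathcal{D}(\bm{\lambda}_t)$, and then control that gap through the \emph{statistical}-plus-\emph{optimization} error decomposition already displayed in \eqref{eq.overall-error}. Since $\mathcal{D}(\bm{\lambda})=\mathbb{E}[\mathcal{D}_t(\bm{\lambda})]$ inherits $\epsilon$-strong concavity from (as4), and $\bm{\lambda}^*$ is its unique maximizer over $\{\bm{\lambda}\ge\mathbf{0}\}$ while $\bm{\lambda}_t\ge\mathbf{0}$ by construction, the first-order optimality condition $\langle\nabla\mathcal{D}(\bm{\lambda}^*),\bm{\lambda}_t-\bm{\lambda}^*\rangle\le 0$ combined with strong concavity gives $\tfrac{\epsilon}{2}\|\bm{\lambda}_t-\bm{\lambda}^*\|^2\le \mathcal{D}(\bm{\lambda}^*)-\mathcal{D}(\bm{\lambda}_t)$. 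It therefore suffices to prove that the right-hand side vanishes as $t\to\infty$; here I would use that $\bm{\lambda}^*$ and the iterates stay bounded, which follows from Slater's condition (as3) together with (as4), so that all the quantities below are well defined.

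Next I would split $\mathcal{D}(\bm{\lambda}^*)-\mathcal{D}(\bm{\lambda}_t)=\big[\mathcal{D}(\bm{\lambda}^*)-\hat{\mathcal{D}}_{\mathcal{S}_t}(\bm{\lambda}^*)\big]+\big[\hat{\mathcal{D}}_{\mathcal{S}_t}(\bm{\lambda}^*)-\hat{\mathcal{D}}_{\mathcal{S}_t}(\bm{\lambda}_{\mathcal{S}_t}^*)\big]+\big[\hat{\mathcal{D}}_{\mathcal{S}_t}(\bm{\lambda}_{\mathcal{S}_t}^*)-\hat{\mathcal{D}}_{\mathcal{S}_t}(\bm{\lambda}_t)\big]+\big[\hat{\mathcal{D}}_{\mathcal{S}_t}(\bm{\lambda}_t)-\mathcal{D}(\bm{\lambda}_t)\big]$. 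The second bracket is $\le 0$ since $\bm{\lambda}_{\mathcal{S}_t}^*$ maximizes $\hat{\mathcal{D}}_{\mathcal{S}_t}$; the first and the fourth brackets are each bounded by $\mathcal{H}_s(N_t)$ by the uniform generalization bound \eqref{ineq.gene} applied to the training set $\mathcal{S}_t$; and the third bracket is exactly the optimization error $e_t$, for which Lemma \ref{error-rec} (for $K\ge 6$, and whose bound persists for any $N_{\rm off}\ge 3\kappa/4$) or Proposition \ref{error-K=1} and its analogue (for $1\le K\le 5$) gives $\mathbb{E}[e_t]\le c(K)\mathcal{H}_s(N_t)+(\xi/e^{K/5})(3\kappa/4N_t)^{K/5}$. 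Because $\mathcal{H}_s(N)=dN^{-\beta}$ with $\beta\in(0,1]$ and $N_t=N_{\rm off}+t\to\infty$, both $\mathcal{H}_s(N_t)\to 0$ and the residual $\propto N_t^{-K/5}\to 0$, so on the event that \eqref{ineq.gene} holds we get $\tfrac{\epsilon}{2}\,\mathbb{E}\|\bm{\lambda}_t-\bm{\lambda}^*\|^2\le (2+c(K))\,\mathcal{H}_s(N_t)+(\xi/e^{K/5})(3\kappa/4N_t)^{K/5}\to 0$.

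The step I expect to be the main obstacle is the probabilistic bookkeeping that upgrades this to the claimed ``with high probability'' limit, since \eqref{ineq.gene} holds with probability only $1-\delta$ for a \emph{fixed} training set. To use it simultaneously for all slots $t$ I would pick a summable sequence of confidence levels $\delta_t$ with $\sum_t\delta_t\le\delta$ (e.g. $\delta_t\propto\delta/t^2$); by the union bound, on an event of probability at least $1-\delta$ the inequality \eqref{ineq.gene} holds for $\mathcal{S}_t$ for all $t$, at the price of only an extra logarithmic ($\sqrt{\log t}$-type) factor inside $\mathcal{H}_s(N_t)$, which still vanishes because $N_t$ grows linearly in $t$. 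Applying Markov's inequality to the expectation bound on $e_t$ then makes $\|\bm{\lambda}_t-\bm{\lambda}^*\|$ arbitrarily small with probability arbitrarily close to $1$ for all large $t$, which is the sense in which $\bm{\lambda}_t\to\bm{\lambda}^*$ w.h.p. The crux is thus not a new inequality but this careful, simultaneous-in-$t$ coupling of the high-probability statistical bound \eqref{ineq.gene} with the in-expectation optimization bound of Lemma \ref{error-rec}.
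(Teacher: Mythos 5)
Your proposal is correct and follows essentially the same route as the paper's proof: quadratic growth from the $\epsilon$-strong concavity of ${\cal D}$, the same statistical-plus-optimization decomposition of ${\cal D}(\bm{\lambda}^*)-{\cal D}(\bm{\lambda}_t)$ (your four brackets are the paper's ${\cal R}_1$, expanded via \eqref{eq.overall-error}, plus ${\cal R}_2$), Markov's inequality to pass from the in-expectation bound of Lemma \ref{error-rec} to a high-probability one, and the observation that ${\cal H}_s(N_t)$ and $N_t^{-K/5}$ vanish as $t\rightarrow\infty$. Your one genuine refinement is the union bound over $t$ with summable confidence levels $\delta_t$ to make the high-probability events simultaneous in $t$; the paper applies its per-$t$, probability-$(1-3\delta)$ bound directly inside the limit without this step, so on this point your bookkeeping is the more careful of the two.
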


\begin{proof}
Since the derivations for $K\in \{1,\dots,5\}$ follow the same steps with only different constant $c(K)$, we focus on $K\geq 6$.
Recalling the Markov's inequality that for any nonnegative random
variable $X$ and constant $\delta>0$, we have $\mathbb{P}(X\geq 1/\delta )\leq \mathbb{E}[X]\delta$.
We can rewrite \eqref{ineq.error-rec} as
\begin{equation}\label{ineq.error-rec-re}
	\mathbb{E}\left[\frac{\hat{{\cal D}}_{{\cal S}_t}(\bm{\lambda}_{{\cal S}_t}^*)\!-\!\hat{{\cal D}}_{{\cal S}_t}(\bm{\lambda}_t)}{c(K){\cal H}_s(N_t)+\left({3\kappa}/{(4N_t)}\right)^{\!K/5}{\xi}/{e^{\frac{K}{5}}}}\right]\leq 1.
\end{equation}
Since the random variable inside the expectation of \eqref{ineq.error-rec-re} is non-negative, applying Markov's inequality yields
\begin{equation}\label{ineq.markov}
    \mathbb{P}\!\left(\!\hat{{\cal D}}_{{\cal S}_t}(\bm{\lambda}_{{\cal S}_t}^*)\!-\!\hat{{\cal D}}_{{\cal S}_t}(\bm{\lambda}_t)\!\geq\! \frac{1}{\delta}\!\left[c(K){\cal H}_s(N_t)\!+\!\frac{\xi}{e^{\frac{K}{5}}}\Big(\frac{3\kappa}{4N_t}\Big)^{\!\!\frac{K}{5}}\right]\!\right)\!\leq\! \delta
\end{equation}
thus with probability at least $1-\delta$, it holds that
\begin{equation}\label{ineq.emp-error}
	\hat{{\cal D}}_{{\cal S}_t}(\bm{\lambda}_{{\cal S}_t}^*)-\hat{{\cal D}}_{{\cal S}_t}(\bm{\lambda}_t)\leq c(K){\cal H}_s(N_t)+\frac{\xi}{e^{K/5}}\Big(\frac{3\kappa}{4N_t}\Big)^{\!\!\frac{K}{5}}\!\!\!
\end{equation}
where the RHS of \eqref{ineq.emp-error} scales by the constant $1/{\delta}$, but again we keep that dependency implicit to simplify notation.

With the current empirical multiplier $\bm{\lambda}_t$, the optimality gap in terms of the ensemble objective \eqref{eq.dual-func} can be decomposed by
\begin{align}\label{total-error0}
	{\cal D}(\bm{\lambda}^*)\!-\!{\cal D}(\bm{\lambda}_t)\!=\!&\underbrace{{\cal D}(\bm{\lambda}^*)\!-\!\hat{{\cal D}}_{{\cal S}_t}(\bm{\lambda}_t)}_{{\cal R}_1}+\underbrace{\hat{{\cal D}}_{{\cal S}_t}(\bm{\lambda}_t)\!-\!{\cal D}(\bm{\lambda}_t)}_{{\cal R}_2}.
\end{align}
Note that ${\cal R}_1$ can be bounded above via \eqref{eq.overall-error} by
\begin{align}\label{ineq.emp-error2}
	{\cal R}_1&\stackrel{(a)}{\leq} \hat{{\cal D}}_{{\cal S}_t}(\bm{\lambda}_{{\cal S}_t}^*)\!-\!\hat{{\cal D}}_{{\cal S}_t}(\bm{\lambda}_t)+{\cal H}_s(N_t)\nonumber\\
	&\stackrel{(b)}{\leq} (1+c(K)){\cal H}_s(N_t)+\frac{\xi}{e^{K/5}}\Big(\frac{3\kappa}{4N_t}\Big)^{\!\!\frac{K}{5}}
\end{align}
where (a) uses \eqref{eq.overall-error} and the definition of ${\cal H}_o(KN_t)$; (b) follows from \eqref{ineq.emp-error}; and both (a) and (b) hold with probability $1-\delta$ so that \eqref{ineq.emp-error2} holds with probability at least $(1-\delta)^2\geq 1-2\delta$.

In addition, ${\cal R}_2$ can be bounded via the uniform convergence in \eqref{ineq.gene} with probability $1-\delta$ by
\begin{equation}\label{ineq.emp-error3}
	{\cal R}_2\leq \sup_{\bm{\lambda}\geq \mathbf{0}}|{\cal D}(\bm{\lambda})-\hat{{\cal D}}_{{\cal S}_t}(\bm{\lambda})|\leq {\cal H}_s(N)
\end{equation}
where the RHS scales by $\ln(1/\delta)$.
Plugging \eqref{ineq.emp-error2} and \eqref{ineq.emp-error3} into \eqref{total-error0}, with probability at least $1-3\delta$, it follows that
\begin{align}\label{total-error}
	\!\!	{\cal D}(\bm{\lambda}^*)\!-\!{\cal D}(\bm{\lambda}_t)\!\leq\! h(\delta)\Big[2\!+\!c(K)\Big]{\cal H}_s(N_t)\!+\!\frac{h(\delta)\xi}{e^{K/5}}\Big(\frac{3\kappa}{4N_t}\Big)^{\!\!\frac{K}{5}}\!\!\!
\end{align}
where the multiplicative constant $h(\delta)={\cal O}(\frac{1}{\delta}\ln{\frac{1}{\delta}})$ in the RHS is induced by the hidden constants in \eqref{ineq.emp-error2} and \eqref{ineq.emp-error3}, that is independent of $t$ but increasing as $\delta$ decreases.
Note that the strong concavity of the ensemble dual function ${\cal D}(\bm{\lambda})$ implies that $\frac{\epsilon}{2}\|\bm{\lambda}^*-\bm{\lambda}_t\|^2\leq {\cal D}(\bm{\lambda}^*)-{\cal D}(\bm{\lambda}_t)$; see \cite[Corollary 1]{yu2016}.
Together with \eqref{total-error}, with probability at least $1-3\delta$, it leads to
	\begin{align}\label{eq.duallimit}
		&\lim_{t\rightarrow \infty}\frac{\epsilon}{2}\|\bm{\lambda}^*-\bm{\lambda}_t\|^2\leq\lim_{t\rightarrow \infty}{\cal D}(\bm{\lambda}^*)-\!{\cal D}(\bm{\lambda}_t)\nonumber\\
		\leq &\lim_{t\rightarrow \infty} h(\delta)\left[2+c(K)\right]{\cal H}_s(N_t)\!+\!\frac{h(\delta)\xi}{e^{K/5}}\Big(\frac{3\kappa}{4N_t}\Big)^{\!\!\frac{K}{5}}\!\stackrel{(c)}{=}0
	\end{align}
	where (c) follows since $h(\delta)$ is a constant, $\lim_{t\rightarrow \infty} {\cal H}_s(N_t)=0$, and $\lim_{t\rightarrow \infty} {1}/{N_t}=0$.
	\end{proof}

Theorem \ref{emp-dual} asserts that $\bm{\lambda}_t$ learned through online SAGA converges to the optimal $\bm{\lambda}^*$ w.h.p., even for small $N_{\rm off}$ and $K$.
To increase the confidence of Theorem \ref{emp-dual}, one should decrease the constant $\delta$ to ensure a large $1-3\delta$.
	Although a small $\delta$ will also enlarge the multiplicative constant $h(\delta)$ in the RHS of \eqref{total-error}, the result in \eqref{eq.duallimit} still holds since ${\cal H}_s(N_t)$ and $1/N_t$ deterministically converge to null as $t\rightarrow \infty$. Therefore, Theorem \ref{emp-dual} indeed holds with arbitrary high probability with $\delta$ small enough. For subsequent discussions, we only use w.h.p. to denote this high probability $1-3\delta$ to simplify the exposition.
	
Considering the trade-off between learning accuracy and storage requirements, a remark is due at this point.
\begin{remark}[\textbf{Memory complexity}]
	Convergence to $\bm{\lambda}^*$ in \eqref{eq.emp-dual} requires $N_t\rightarrow\infty$, but in practice datasets are finite.
	Suppose that $N_{\rm off}\in[3\kappa/4,W]$, and online SAGA stores the most recent $W\leq N_t$ samples with their associated gradients from ${\cal S}_t$. Following Lemma \ref{error-rec} and \cite[Lemma 6]{mokhtari2016}, one can show that the statistical error term at the RHS of \eqref{total-error} will stay at ${\cal O}({\cal H}_s(W))$ for a sufficiently large $W$; thus, $\bm{\lambda}_t$ will converge to a ${\cal O}({\cal H}_s(W))$-neighborhood of $\bm{\lambda}^*$ w.h.p.
	However, since we eventually aim at an ${\cal O}({\sqrt{\mu}})$-optimal effective multiplier (cf. \eqref{eq.covg-gamma}), it suffices to choose $W$ such that ${\cal H}_s(W)=\mathbf{o}(\sqrt{\mu})$.
	\end{remark}

Having established convergence of $\bm{\lambda}_t$, the next step is to show that $\bm{\gamma}_t$ also converges to $\bm{\lambda}^*$, and being a function of $\bm{\gamma}_t$ the online resource allocation $\mathbf{x}_t$ is also asymptotically optimal (cf. Proposition \ref{prop.closedform}).
To study the trajectory of $\bm{\gamma}_t$, consider the iteration of successive differences (cf. \eqref{eq.effect-dual})
\begin{equation}\label{eq.gamma}
	\bm{\gamma}_{t+1}-\bm{\gamma}_t=(\bm{\lambda}_{t+1}-\bm{\lambda}_t)+\mu(\mathbf{q}_{t+1}-\mathbf{q}_t).
\end{equation}
In view of \eqref{eq.emp-dual} and \eqref{eq.gamma}, to establish convergence of $\bm{\gamma}_t$ it is prudent to study
the asymptotic behavior of $(\mathbf{q}_{t+1}-\mathbf{q}_t)$.
To this end, let $\tilde{\mathbf{b}}_t:=\bm{\lambda}^*-\bm{\lambda}_t+\mathbf{b}$ denote the error $\bm{\lambda}^*-\bm{\lambda}_t$ of the statistically learned dual variable plus a bias-control variable $\mathbf{b}$. Theorem \ref{emp-dual} guarantees that $\lim_{t\rightarrow \infty}\tilde{\mathbf{b}}_t=\mathbf{b},\,{\rm w.h.p}$ Next, we show that $\mathbf{q}_t$ is attracted towards $\tilde{\mathbf{b}}_t/\mu$.

\begin{lemma}\label{lem.drift}
Consider the online SAGA in Algorithm \ref{algo2}, and the conditions in (as1)-(as4) are satisfied.
There exists a constant $B=\Theta({1}/{\sqrt{\mu}})$ and a finite time $T_B<\infty$, such that for all $t\geq T_B$, if $\|\mathbf{q}_t-\tilde{\mathbf{b}}_t/\mu\|>B$, it holds w.h.p. that queue lengths under online SAGA satisfy
	\begin{equation}\label{eq.drift}
		\mathbb{E}\left[\left\|\mathbf{q}_{t+1}-\tilde{\mathbf{b}}_t/\mu\right\|\Big|\mathbf{q}_t\right]\leq \left\|\mathbf{q}_t-\tilde{\mathbf{b}}_t/\mu\right\|-\sqrt{\mu}.
	\end{equation}
\end{lemma}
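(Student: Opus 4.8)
\textbf{Proof proposal for Lemma~\ref{lem.drift}.}
The plan is to establish a negative-drift condition on the distance between the queue vector $\mathbf{q}_t$ and the (slowly drifting) center $\tilde{\mathbf{b}}_t/\mu$, in the spirit of the Lyapunov drift arguments used in stochastic network optimization, but adapted to the fact that the attractor is itself time-varying. First I would square the queue recursion \eqref{eq.probm}. Using $\|[\mathbf{u}]^+\|^2\le\|\mathbf{u}\|^2$ (projection onto the nonnegative orthant is nonexpansive toward any feasible point, and here $\tilde{\mathbf{b}}_t/\mu\ge\mathbf{0}$ eventually since $\tilde{\mathbf{b}}_t\to\mathbf{b}>\mathbf{0}$ w.h.p.) gives
\begin{equation*}
\big\|\mathbf{q}_{t+1}-\tilde{\mathbf{b}}_t/\mu\big\|^2 \le \big\|\mathbf{q}_t-\tilde{\mathbf{b}}_t/\mu +\mathbf{A}\mathbf{x}_t+\mathbf{c}_t\big\|^2.
\end{equation*}
Expanding the right-hand side, the cross term is $2(\mathbf{q}_t-\tilde{\mathbf{b}}_t/\mu)^{\top}(\mathbf{A}\mathbf{x}_t+\mathbf{c}_t)$, and the quadratic term $\|\mathbf{A}\mathbf{x}_t+\mathbf{c}_t\|^2$ is bounded by a constant $C_0$ because the primal iterates live in the compact set ${\cal X}$ (as1, \eqref{eq.probl}). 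So everything reduces to producing a sufficiently negative \emph{conditional} expectation of the cross term whenever $\|\mathbf{q}_t-\tilde{\mathbf{b}}_t/\mu\|$ is large.

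The key structural fact is that $\mathbf{x}_t$ is the Lagrangian minimizer at the effective multiplier $\bm{\gamma}_t=\bm{\lambda}_t+\mu\mathbf{q}_t-\mathbf{b}=\mu\mathbf{q}_t-\tilde{\mathbf{b}}_t+\bm{\lambda}^*$, i.e. $\mu(\mathbf{q}_t-\tilde{\mathbf{b}}_t/\mu)=\bm{\gamma}_t-\bm{\lambda}^*$. Hence the cross term equals $\frac{2}{\mu}(\bm{\gamma}_t-\bm{\lambda}^*)^{\top}(\mathbf{A}\mathbf{x}_t(\bm{\gamma}_t)+\mathbf{c}_t)$, and after conditioning on $\mathbf{q}_t$ (equivalently on $\bm{\gamma}_t$) and taking expectation over $\mathbf{s}_t$ this becomes $\frac{2}{\mu}(\bm{\gamma}_t-\bm{\lambda}^*)^{\top}\,\mathbb{E}[\nabla{\cal D}_t(\bm{\gamma}_t)]$, which is exactly $\frac{2}{\mu}(\bm{\gamma}_t-\bm{\lambda}^*)^{\top}\nabla{\cal D}(\bm{\gamma}_t)$ by \eqref{eq.dual-stocg}–\eqref{eq.dual-prob}. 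Now invoke (as4): ${\cal D}$ is $\epsilon$-strongly concave with $\nabla{\cal D}(\bm{\lambda}^*)=\mathbf{0}$ at the (unconstrained-in-the-relevant-region) maximizer, so $(\bm{\gamma}_t-\bm{\lambda}^*)^{\top}\nabla{\cal D}(\bm{\gamma}_t)\le -\epsilon\|\bm{\gamma}_t-\bm{\lambda}^*\|^2=-\epsilon\mu^2\|\mathbf{q}_t-\tilde{\mathbf{b}}_t/\mu\|^2$. (A minor subtlety: $\bm{\lambda}^*$ solves the constrained dual \eqref{eq.dual-prob}; for $t\ge T_B$ and $\|\mathbf{q}_t-\tilde{\mathbf{b}}_t/\mu\|>B$ large, $\bm{\gamma}_t$ is strictly positive entrywise, so the strong-concavity/first-order inequality applies on the interior — this needs $B=\Theta(1/\sqrt\mu)$ combined with $\tilde{\mathbf{b}}_t/\mu=\Theta(1/\mu)$ growing faster.) Collecting terms,
\begin{equation*}
\mathbb{E}\big[\|\mathbf{q}_{t+1}-\tilde{\mathbf{b}}_t/\mu\|^2\,\big|\,\mathbf{q}_t\big]\le (1-2\epsilon\mu)\|\mathbf{q}_t-\tilde{\mathbf{b}}_t/\mu\|^2+C_0,
\end{equation*}
from which, using $\sqrt{\mathbb{E}[X^2]}\ge$ isn't quite what's wanted — rather, via $\mathbb{E}[X]\le\sqrt{\mathbb{E}[X^2]}$ and the elementary bound $\sqrt{(1-2\epsilon\mu)r^2+C_0}\le r-\sqrt\mu$ valid once $r>B$ for a suitable $B=\Theta(1/\sqrt\mu)$ (solve $(1-2\epsilon\mu)r^2+C_0\le (r-\sqrt\mu)^2=r^2-2\sqrt\mu\,r+\mu$, i.e. $2\sqrt\mu\,r\le 2\epsilon\mu r^2-C_0+\mu$, which holds for $r\gtrsim 1/(\epsilon\sqrt\mu)$), one obtains \eqref{eq.drift}.

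The main obstacle I anticipate is \emph{handling the time-varying center} $\tilde{\mathbf{b}}_t/\mu$ cleanly. Two issues must be reconciled: (i) $\bm{\lambda}^*$ is the maximizer of the \emph{constrained} problem, so the first-order optimality inequality $(\bm{\gamma}-\bm{\lambda}^*)^{\top}\nabla{\cal D}(\bm{\gamma})\le -\epsilon\|\bm{\gamma}-\bm{\lambda}^*\|^2$ is only immediate on a neighborhood where the active constraints of \eqref{eq.dual-prob} at $\bm{\lambda}^*$ are respected by $\bm{\gamma}_t$; one must argue that for $t\ge T_B$, the bias correction $-\mathbf{b}$ plus the drift keeps $\bm{\gamma}_t$ in the right region, or use a variational-inequality form valid on all of the positive orthant. (ii) Since $\tilde{\mathbf{b}}_t$ itself changes between $t$ and $t+1$, strictly speaking one should center both sides at $\tilde{\mathbf{b}}_t/\mu$ (as the statement does) and absorb $\|\tilde{\mathbf{b}}_{t+1}-\tilde{\mathbf{b}}_t\|/\mu$ — but by Theorem~\ref{emp-dual} this increment is $\mathbf{o}(1)$ w.h.p. and can be made smaller than any fixed fraction of $\sqrt\mu$ for $t\ge T_B$, which is precisely why the lemma is stated for $t\ge T_B$ and ``w.h.p.'' The rest — bounding $\|\mathbf{A}\mathbf{x}_t+\mathbf{c}_t\|$, the Jensen step, and the quadratic-inequality manipulation fixing $B=\Theta(1/\sqrt\mu)$ — is routine.
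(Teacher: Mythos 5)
Your proposal is correct and follows essentially the same route as the paper's proof in Appendix E: non-expansiveness of the projection, a bounded second-moment term, rewriting the cross term via $\mu(\mathbf{q}_t-\tilde{\mathbf{b}}_t/\mu)=\bm{\gamma}_t-\bm{\lambda}^*$, invoking strong concavity together with $\nabla{\cal D}(\bm{\lambda}^*)=\mathbf{0}$ (the paper isolates this as Proposition~\ref{prop.primal-dual}), and closing with the quadratic-inequality/Jensen step that fixes $B=\Theta(1/\sqrt{\mu})$. The only cosmetic difference is that the paper applies the instantaneous supergradient inequality before taking the expectation over $\mathbf{s}_t$, whereas you take the expectation first and then use gradient strong monotonicity; both yield the same drift bound up to a factor of two in the strong-concavity term.
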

\begin{proof}\label{pf.queue-drift}
See Appendix \ref{app.lemma2}.
\end{proof}
Lemma \ref{lem.drift} reveals that when $\mathbf{q}_t$ deviates from $\tilde{\mathbf{b}}_t/\mu$, it will bounce back towards $\tilde{\mathbf{b}}_t/\mu$ at the next slot. With the drift behavior of $\mathbf{q}_t$ in \eqref{eq.drift}, we are on track to establish the desired long-term queue stability.

\begin{theorem}\label{the.queue-stable}
Consider the online SAGA in Algorithm \ref{algo2}, and the conditions in (as1)-(as4) are satisfied.
With constants $\mathbf{b}$ and $\mu$ defined in \eqref{eq.effect-dual}, the steady-state queue length satisfies
\begin{equation}\label{eq.queue}
	\lim_{T\rightarrow \infty} \frac{1}{T} \sum_{t=1}^{T} \mathbb{E}\left[\mathbf{q}_t\right]=\frac{\mathbf{b}}{\mu}+{\cal O}(\frac{\mathbf{1}}{\sqrt{\mu}}),\;{\rm w.h.p.}
\end{equation}
where the probability is close but strictly less than $1$.
\end{theorem}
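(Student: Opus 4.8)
The plan is to combine the one-step negative drift of Lemma~\ref{lem.drift} with a bounded-increments property of the queue, feed both into a Foster--Lyapunov/Hajek-type drift argument to bound the time-averaged deviation of $\mathbf{q}_t$ from its slowly moving center $\tilde{\mathbf{b}}_t/\mu$, and then average and invoke Theorem~\ref{emp-dual}. First I would record that the queue has uniformly bounded increments: from \eqref{eq.probm}, non-expansiveness of $[\cdot]^{+}$ and $\mathbf{q}_t\geq\mathbf{0}$ one gets $\|\mathbf{q}_{t+1}-\mathbf{q}_t\|\leq\|\mathbf{A}\mathbf{x}_t+\mathbf{c}_t\|$, and since $\mathbf{x}_t\in{\cal X}$ is bounded by \eqref{eq.probl} while $\mathbf{c}_t$ is bounded by (as1), there is a constant $\Delta_q={\cal O}(1)$, independent of $\mu$, with $\|\mathbf{q}_{t+1}-\mathbf{q}_t\|\leq\Delta_q$ for all $t$.

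Next I would show that the drift center moves slowly. Since $\tilde{\mathbf{b}}_{t+1}-\tilde{\mathbf{b}}_t=-(\bm{\lambda}_{t+1}-\bm{\lambda}_t)$, Theorem~\ref{emp-dual} (i.e.\ $\bm{\lambda}_t\to\bm{\lambda}^*$ w.h.p.) gives $\omega_t:=\|\bm{\lambda}_{t+1}-\bm{\lambda}_t\|/\mu\to 0$ w.h.p., so on the relevant event there is a finite $T'$ (possibly depending on $\mu$) beyond which $\mathbb{E}[\omega_t\mid{\cal F}_t]\leq\tfrac12\sqrt{\mu}$ and $\omega_t\leq 1$. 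Writing $L_t:=\|\mathbf{q}_t-\tilde{\mathbf{b}}_t/\mu\|$ and $\tilde L_{t+1}:=\|\mathbf{q}_{t+1}-\tilde{\mathbf{b}}_t/\mu\|$, one has $|L_{t+1}-\tilde L_{t+1}|\leq\omega_t$ and $|L_{t+1}-L_t|\leq\Delta_q+\omega_t$; combined with Lemma~\ref{lem.drift} ($\mathbb{E}[\tilde L_{t+1}\mid{\cal F}_t]\leq L_t-\sqrt{\mu}$ whenever $L_t>B$, $B=\Theta(1/\sqrt{\mu})$, $t\geq T_B$), this yields for every $t\geq T^*:=\max(T_B,T')$ the clean drift
\begin{equation*}
\mathbb{E}[L_{t+1}\mid{\cal F}_t]\leq L_t-\tfrac12\sqrt{\mu}\ \text{ when }L_t>B,\qquad |L_{t+1}-L_t|\leq\Delta:=\Delta_q+1 .
\end{equation*}

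From here I would invoke a Hajek-type estimate: a uniform negative drift $-\tfrac12\sqrt{\mu}$ outside a ball of radius $B$ together with uniformly bounded increments $\Delta$ yields (through the exponential Lyapunov function $e^{\theta L_t}$ with $\theta=\Theta(\sqrt{\mu}/\Delta^2)$) a geometric tail $\mathbb{P}[L_t>B+s]\leq c_1 e^{-c_2 s}$ for all $t\geq T^*$, $s\geq 0$, with $c_2=\Theta(\sqrt{\mu})$ and $c_1={\cal O}(1)$; integrating the tail gives $\sup_{t\geq T^*}\mathbb{E}[L_t]\leq B+c_1/c_2={\cal O}(1/\sqrt{\mu})$, and since the finitely many transient slots $t<T^*$ contribute $\mathbf{o}(T)$, $\limsup_{T\to\infty}\frac1T\sum_{t=1}^T\mathbb{E}[L_t]={\cal O}(1/\sqrt{\mu})$. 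The theorem then follows from the decomposition $\frac1T\sum_{t=1}^T\mathbb{E}[\mathbf{q}_t]=\frac1{\mu T}\sum_{t=1}^T\mathbb{E}[\tilde{\mathbf{b}}_t]+\frac1T\sum_{t=1}^T\mathbb{E}[\mathbf{q}_t-\tilde{\mathbf{b}}_t/\mu]$: the first running average converges to $\mathbf{b}/\mu$ because $\tilde{\mathbf{b}}_t=\bm{\lambda}^*-\bm{\lambda}_t+\mathbf{b}\to\mathbf{b}$ w.h.p.\ by Theorem~\ref{emp-dual}, while the norm of the second is at most $\frac1T\sum_{t=1}^T\mathbb{E}[L_t]={\cal O}(\mathbf{1}/\sqrt{\mu})$ in the limit, giving \eqref{eq.queue} (with both the liminf and the limsup of the left side lying in the stated range), all on the high-probability event.

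The step I expect to be the main obstacle is the sharpness of the last estimate. A crude quadratic-Lyapunov telescoping of the linear drift only delivers an ${\cal O}(1/\mu)$ bound on $\frac1T\sum_t\mathbb{E}[L_t]$, because the region $\{L_t\leq B\}$ alone contributes $\Theta(B\Delta/\sqrt{\mu})=\Theta(1/\mu)$; recovering the advertised ${\cal O}(1/\sqrt{\mu})$ order forces one to control the full tail of $L_t$ via the exponential/Hajek argument and to verify that its constants are uniform in $t$ even though the center $\tilde{\mathbf{b}}_t/\mu$ of the drift is itself moving (handled above). A secondary technicality is that Lemma~\ref{lem.drift} and Theorem~\ref{emp-dual} hold only w.h.p., so the drift conditioning and the passage from ``$\omega_t\to 0$'' to ``$\mathbb{E}[\omega_t\mid{\cal F}_t]\leq\tfrac12\sqrt{\mu}$ for $t\geq T'$'' must be carried out on the appropriate high-probability event.
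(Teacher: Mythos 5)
Your proposal is correct and follows essentially the same route as the paper: the paper's Appendix only sketches this argument, invoking Lemma~\ref{lem.drift}, the convergence $\tilde{\mathbf{b}}_t\to\mathbf{b}$ from Theorem~\ref{emp-dual}, and the Foster--Lyapunov criterion, and then defers the rigorous details to \cite[Theorem 1]{huang2014}. Your Hajek-type exponential-Lyapunov step (geometric tail with rate $\Theta(\sqrt{\mu})$, hence $\mathbb{E}\|\mathbf{q}_t-\tilde{\mathbf{b}}_t/\mu\|={\cal O}(1/\sqrt{\mu})$) is precisely the omitted content of that citation --- it is the same large-deviation bound the paper later reuses in the proof of Theorem~\ref{gap-onlineSAGA} --- so you have supplied the details rather than taken a different path.
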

\begin{proof}\label{pf.queue-stable}
See Appendix \ref{app.theorem3}.
\end{proof}

Theorem \ref{the.queue-stable} entails that the online solution for the relaxed problem in \eqref{eq.reform2} is a feasible solution for the original problem in \eqref{eq.prob}, which justifies effectiveness of the online learn-and-adapt step \eqref{eq.effect-dual}.
Theorem \ref{the.queue-stable} also points out the importance of the parameter $\mathbf{b}$ in \eqref{eq.effect-dual}, since the steady-state queue lengths will hover around $\mathbf{b}/\mu$ with distance ${\cal O}(1/\sqrt{\mu})$.
Using Theorem \ref{emp-dual}, it suffices to show that $\bm{\gamma}_t$ converges to a neighborhood of $\bm{\lambda}^*$,
\begin{equation}\label{eq.covg-gamma}
	\lim_{t\rightarrow \infty}\bm{\gamma}_t=\bm{\lambda}^*+\mu\mathbf{q}_t-\mathbf{b}=\bm{\lambda}^*+{\cal O}(\sqrt{\mu})\cdot \mathbf{1},\;{\rm w.h.p.}
\end{equation}
Qualitatively speaking, online SAGA behaves similar to SDG in the steady state.
However, the
main difference is that through more accurate empirical estimate $\bm{\lambda}_t$, online SAGA is able to accelerate the convergence of $\bm{\gamma}_t$, and reduce network delay by replacing
$\mathbf{q}_t=\check{\bm{\lambda}}_t/\mu$ in SDG, with $\bm{\lambda}_t$ in \eqref{eq.effect-dual}.


On the other hand, since a smaller $\mathbf{b}$ corresponds to lower average delay, it is tempting to set $\mathbf{b}$ close to zero (cf. \eqref{eq.queue}). Though attractive, a small $\mathbf{b}$ ($\approx \mathbf{0}$) will contribute to an effective multiplier $\bm{\gamma}_t$ $(=\bm{\lambda}_t+\mu\mathbf{q}_t)$ always larger than $\bm{\lambda}^*$, since $\bm{\lambda}_t$ converges to $\bm{\lambda}^*$ and $\mathbf{q}_t \geq \mathbf{0}$. This introduces bias to the effective multiplier $\bm{\gamma}_t$, which means a larger reward in the context of resource allocation.
This in turn encourages ``over-allocation'' of resources, and thus leads to a larger optimality loss due to the non-decreasing property of the primal objective in (as2).
Using arguments similar to those in \cite{huang2011} and \cite{huang2014}, we formally establish next  that by properly selecting $\mathbf{b}$, online SAGA is asymptotically near-optimal.

\begin{theorem}\label{gap-onlineSAGA}
Under (as1)-(as4), let ${\Psi}^*$ denote the optimal objective value in \eqref{eq.prob} under any feasible schedule even  with future information available.
Choosing $\mathbf{b}=\sqrt{\mu}\log^2(\mu)\cdot \mathbf{1}$ with an appropriate $\mu$, the online SAGA in Algorithm \ref{algo2} yields a near-optimal solution for \eqref{eq.prob} in the sense that
\begin{equation}\label{eq.opt-gap}
	    \lim_{T\rightarrow \infty} \frac{1}{T} \sum_{t=1}^{T} \mathbb{E}\left[\Psi_t\left(\mathbf{x}_t(\bm{\gamma}_t)\right)\right] \leq {\Psi}^*+{\cal O}(\mu),\;{\rm w.h.p.}
\end{equation}
where $\mathbf{x}_t(\bm{\gamma}_t)$ denotes the real-time schedules obtained from the Lagrangian minimization \eqref{eq.real-time}, and again this probability is arbitrarily close but strictly less than $1$.
\end{theorem}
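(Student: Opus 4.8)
The plan is to establish the optimality bound \eqref{eq.opt-gap} via a Lyapunov drift-plus-penalty argument, comparing the per-slot cost incurred by online SAGA against the benchmark stationary policy $\bm{\chi}(\cdot)$ whose existence is guaranteed by (as3), and leveraging the steady-state queue characterization in Theorem~\ref{the.queue-stable}. The starting point is that $\mathbf{x}_t(\bm{\gamma}_t)$ minimizes the instantaneous Lagrangian ${\cal L}_t(\cdot,\bm{\gamma}_t)$ over ${\cal X}$ (cf.\ \eqref{eq.real-time}), so for \emph{any} feasible comparison action $\mathbf{x}_t^\chi:=\bm{\chi}(\mathbf{s}_t)$ we have $\Psi_t(\mathbf{x}_t(\bm{\gamma}_t))+\bm{\gamma}_t^\top(\mathbf{A}\mathbf{x}_t(\bm{\gamma}_t)+\mathbf{c}_t)\leq \Psi_t(\mathbf{x}_t^\chi)+\bm{\gamma}_t^\top(\mathbf{A}\mathbf{x}_t^\chi+\mathbf{c}_t)$. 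Substituting $\bm{\gamma}_t=\bm{\lambda}_t+\mu\mathbf{q}_t-\mathbf{b}$ and rearranging isolates the cost difference $\Psi_t(\mathbf{x}_t(\bm{\gamma}_t))-\Psi_t(\mathbf{x}_t^\chi)$ in terms of $(\bm{\lambda}_t-\mathbf{b})^\top(\mathbf{A}\mathbf{x}_t^\chi-\mathbf{A}\mathbf{x}_t(\bm{\gamma}_t))$ plus $\mu\mathbf{q}_t^\top(\mathbf{A}\mathbf{x}_t^\chi-\mathbf{A}\mathbf{x}_t(\bm{\gamma}_t))$; the latter is exactly the quantity that a Lyapunov drift on $\tfrac12\|\mathbf{q}_t\|^2$ controls, since $\mathbf{q}_{t+1}=[\mathbf{q}_t+\mathbf{A}\mathbf{x}_t+\mathbf{c}_t]^+$.

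\textbf{Key steps in order.} First, I would define the Lyapunov function $\Lambda(\mathbf{q}_t):=\tfrac12\|\mathbf{q}_t\|^2$ and bound the one-slot drift $\Lambda(\mathbf{q}_{t+1})-\Lambda(\mathbf{q}_t)\leq \mathbf{q}_t^\top(\mathbf{A}\mathbf{x}_t+\mathbf{c}_t)+\tfrac12\|\mathbf{A}\mathbf{x}_t+\mathbf{c}_t\|^2$, using nonexpansiveness of $[\cdot]^+$; the second term is a bounded constant $\Theta(1)$ because ${\cal X}$ is compact and $\mathbf{s}_t$ has bounded support by (as1). Second, I would add $\tfrac{1}{\mu}\Psi_t(\mathbf{x}_t(\bm{\gamma}_t))$ (a scaled penalty) to the drift and invoke the Lagrangian-minimization inequality above to replace the online action with the stationary benchmark $\bm{\chi}$, picking up the slack term $-\bm{\zeta}$ from (as3) on the queue-coupled part and the term $(\bm{\lambda}_t-\mathbf{b})^\top(\mathbf{A}\mathbf{x}_t^\chi - \mathbf{A}\mathbf{x}_t(\bm{\gamma}_t))$ on the learning part. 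Third, I would take conditional expectation given $\mathbf{q}_t$ and the history, use $\mathbb{E}[\mathbf{A}\bm{\chi}(\mathbf{s}_t)+\mathbf{c}_t]\leq-\bm{\zeta}$ and $\mathbb{E}[\Psi_t(\bm{\chi}(\mathbf{s}_t))]=\tilde\Psi^*\leq\Psi^*$, and telescope over $t=1,\dots,T$, dividing by $T$ and $\mu$. Fourth — and this is where Theorems~\ref{emp-dual} and~\ref{the.queue-stable} enter — I would handle the residual term: $\bm{\lambda}_t\to\bm{\lambda}^*$ w.h.p., and $\mathbf{b}^\top\mathbb{E}[\mathbf{A}\mathbf{x}_t^\chi+\mathbf{c}_t]\leq 0$ while $\mathbf{b}^\top\mathbf{A}\mathbf{x}_t(\bm{\gamma}_t)$ is controlled by $\|\mathbf{b}\|=\Theta(\sqrt{\mu}\log^2(\mu))$ times a constant, contributing $\mathbf{o}(\mu)$ after the $1/\mu$ scaling; and the $\bm{\lambda}^*$-part, being the benchmark multiplier, yields a $\bm{\lambda}^{*\top}\mathbb{E}[\mathbf{A}\bm{\chi}+\mathbf{c}]\leq 0$ contribution by dual feasibility/complementary slackness. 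Finally, the $\mathbf{q}_1$ and $\mathbf{q}_{T+1}$ boundary terms from telescoping vanish after dividing by $T$ because Theorem~\ref{the.queue-stable} gives $\mathbb{E}[\|\mathbf{q}_t\|]=\Theta(1/\mu)$, hence $\mathbb{E}[\|\mathbf{q}_{T+1}\|^2]/T\to0$; collecting the $\Theta(1)$ drift constant divided by $\mu$ gives the advertised ${\cal O}(\mu)$ gap after the overall $\mu$ scaling, and choosing $\mathbf{b}=\sqrt{\mu}\log^2(\mu)\cdot\mathbf{1}$ makes the bias-induced over-allocation loss — which by the non-decreasing, $\tilde L$-smooth property in (as2) is $\mathcal{O}(\|\mathbf{b}\|^2/\mu)=\mathcal{O}(\log^4(\mu))$... wait, this needs $\mathbf{b}$ small enough that $\|\mathbf{b}\|^2/\mu = \mathbf{o}(\mu)$ fails, so more precisely the over-allocation loss scales as $\|\mathbf{b}\|\cdot\Theta(1)=\Theta(\sqrt\mu\log^2\mu)$ before the final comparison — subdominant relative to ${\cal O}(\mu)$... on second thought the precise accounting of how $\log^2(\mu)$ enters is delicate, which brings me to the obstacle.

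\textbf{Main obstacle.} The subtle part is the interplay between the bias $\mathbf{b}$ and the optimality gap: $\mathbf{b}$ must be large enough that the queue stays positive and the drift argument in Lemma~\ref{lem.drift} keeps $\mathbf{q}_t$ attracted to $\tilde{\mathbf{b}}_t/\mu$ (so $\bm{\gamma}_t$ does not systematically overshoot $\bm{\lambda}^*$ by more than ${\cal O}(\sqrt\mu)$, cf.\ \eqref{eq.covg-gamma}), yet small enough that the induced ``over-allocation'' optimality loss — which by non-decreasingness and $\tilde L$-smoothness of $\Psi_t$ scales roughly like $\|\mathbf{b}\|$ times a Lipschitz factor — remains ${\cal O}(\mu)$ after all scalings. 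The choice $\mathbf{b}=\sqrt{\mu}\log^2(\mu)\cdot\mathbf{1}$ is exactly the threshold that makes the $\log$ factors absorb the $\mathbf{o}(\cdot)$ slack in the statistical error ${\cal H}_s(N_t)$ relative to $\mu$, and verifying this requires carefully coupling the w.h.p.\ convergence rate of $\bm{\lambda}_t$ (Theorem~\ref{emp-dual}, Lemma~\ref{error-rec}) with the queue drift so that the residual $\|\bm{\lambda}^*-\bm{\lambda}_t\|$ terms, integrated against $\mathbf{A}\mathbf{x}_t$, are genuinely $\mathbf{o}(\mu)$ on average and not merely $o(1)$. I would follow the template of \cite{huang2011,huang2014} for this bookkeeping, but the non-asymptotic control of the transient (before $\bm{\lambda}_t$ has converged) against the $1/\mu$ scaling is the delicate accounting that must be done with care.
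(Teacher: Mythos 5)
Your overall architecture (Lyapunov drift plus penalty, Lagrangian minimality of $\mathbf{x}_t(\bm{\gamma}_t)$, substitution of $\bm{\gamma}_t=\bm{\lambda}_t+\mu\mathbf{q}_t-\mathbf{b}$, telescoping, and vanishing boundary terms) matches the paper's proof in Appendix F, up to the cosmetic difference that the paper invokes weak duality ${\cal D}(\bm{\gamma}_t)\le\tilde\Psi^*\le\Psi^*$ directly rather than comparing against the stationary policy $\bm{\chi}$. However, there is a genuine gap in how you dispose of the residual term, which after your scalings appears in the final bound \emph{without} any factor of $\mu$ and is exactly $\lim_{T\to\infty}\frac{1}{T}\sum_{t=1}^{T}\mathbb{E}\big[(\mathbf{b}-\bm{\lambda}_t)^{\top}(\mathbf{A}\mathbf{x}_t+\mathbf{c}_t)\big]$. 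Your proposed treatments fail: (i) bounding $\mathbf{b}^{\top}\mathbf{A}\mathbf{x}_t(\bm{\gamma}_t)$ by $\|\mathbf{b}\|$ times a constant gives $\Theta(\sqrt{\mu}\log^2(\mu))$, which is \emph{not} ${\cal O}(\mu)$ (indeed $\sqrt{\mu}\log^2(\mu)/\mu\to\infty$ as $\mu\to 0$), and (ii) complementary slackness $\bm{\lambda}^{*\top}\mathbb{E}[\mathbf{A}\bm{\chi}+\mathbf{c}_t]\le 0$ only controls the benchmark action, not the term $\bm{\lambda}^{*\top}\mathbb{E}[\mathbf{A}\mathbf{x}_t(\bm{\gamma}_t)+\mathbf{c}_t]$ involving the online actions, which is what actually appears.

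The missing idea is the paper's chain of inequalities (j)--(l): after replacing $\bm{\lambda}_t$ by $\bm{\lambda}^*$ (cost ${\cal O}(\rho)$ with $\rho=\mathbf{o}(\mu)$) and applying Cauchy--Schwarz, one must show that the time-averaged expected service residual satisfies $\big\|\lim_{T\to\infty}\frac{1}{T}\sum_{t=1}^{T}\mathbb{E}[\mathbf{A}\mathbf{x}_t+\mathbf{c}_t]\big\|={\cal O}(\mu)$. This is sandwiched: it is nonnegative (entrywise, for $-\mathbf{A}\mathbf{x}_t-\mathbf{c}_t$) because the queues are stable, and it can only be strictly positive on slots where $\mathbf{q}_t<\sqrt{M}\cdot\mathbf{1}$; a large-deviation bound on the steady-state queue distribution (from \cite{huang2014,huang2011}, with $\mathbf{q}_t$ concentrating around $\mathbf{b}/\mu$ per Theorem \ref{the.queue-stable}) gives $\mathbb{P}(\mathbf{q}_t<\sqrt{M}\cdot\mathbf{1})\le D_1 e^{-D_2(\mathbf{b}/\mu-\tilde B-\sqrt{M})}$ with $D_1=\Theta(1/\mu)$, $D_2=\Theta(\sqrt{\mu})$. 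It is precisely here that the choice $\mathbf{b}=\sqrt{\mu}\log^2(\mu)\cdot\mathbf{1}$ is used: the exponent becomes $-\Theta(\log^2(\mu))$, so the probability is at most $\mu^{2}$ for small $\mu$, and multiplying by $D_1$ yields ${\cal O}(\mu)$. Your ``main obstacle'' paragraph correctly senses that the role of $\mathbf{b}$ is the delicate point, but frames it as an over-allocation/smoothness tradeoff rather than supplying this service-residual concentration argument, without which the proof does not close.
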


\begin{proof}
See Appendix \ref{app.theorem4}.
\end{proof}

Combining Theorems \ref{the.queue-stable} and \ref{gap-onlineSAGA}, and selecting $\mathbf{b}=\sqrt{\mu}\log^2(\mu)\cdot \mathbf{1}$, online SAGA is asymptotically near-optimal with average queue length ${\cal O}(\log^2(\mu)/{\sqrt{\mu}})$. This implies that in the context of stochastic network optimization \cite{neely2010}, the novel approach achieves a near-optimal cost-delay tradeoff $[\mu,\log^2(\mu)/{\sqrt{\mu}}]$ with high probability. Comparing with the standard tradeoff $[\mu,{1}/{\mu}]$ under SDG \cite{gatsis2010,neely2010}, the proposed offline-aided-online algorithm design proposed significantly improves the online performance in terms of delay in most cases.
This is a desired performance trade-off under the current setting in the sense that the ``optimal" tradeoff $[\mu,\log^2(\mu)]$ in \cite{huang2014} is derived under the local polyhedral assumption, which is typically the case when the primal feasible set contains a finite number of actions. The performance of online SAGA under settings with local polyhedral property is certainly of practical interest, but this goes beyond the scope of the present work, and we leave for future research.


\begin{remark}
The learning-while-adapting attribute of online SAGA amounts to choosing a scheduling policy, or equivalently effective multipliers, satisfying the following criteria.

C1) The effective dual variable is initiated or adjusted online close to the optimal multiplier enabling the online algorithm to quickly reach an optimal resource allocation strategy (cf. Prop. \ref{prop.closedform}). Unlike SDG that relies on incremental updates to adjust dual variables, online SAGA attains a near-optimal effective multiplier much faster than SDG, thanks to the stochastic averaging that accelerates convergence of statistical learning.

C2) Online SAGA judiciously accounts for queue dynamics in order to guarantee long-term queue stability, which becomes possible through instantaneous measurements of queue lengths.

Building on C1) and C2), the proposed data-driven constrained optimization approach can be extended to incorporate second-order iterations in the learning mode (e.g., AdaNewton in \cite{mokhtari2016}), or momentum iterations in the adaptation mode \cite{liu2016}.
\end{remark}

\section{Numerical Tests}\label{sec.Num}

This section presents numerical tests to confirm the analytical claims, and demonstrate the merits of the proposed approach.
The network considered has $I=4$ DCs, and $J=4$ MNs.
Performance is tested in terms of the time-average instantaneous network cost
\begin{equation}
\Psi_t(\mathbf{x}_t)\!:=\!\sum_{i\in{\cal I}}\alpha_{i,t}\left(e_{i,t}x_{i,t}^2 -P_{i,t}^{\rm r}\right)\!+\!\sum_{i\in{\cal I}}\sum_{j\in {\cal J}} c_{i,j}^{\rm d}\tilde{x}_{i,j,t}^2
\end{equation}
where the energy transaction price $\alpha_{i,t}$ is uniformly distributed over $[10,30]$ \$/kWh; the energy efficiency factors are time-invariant taking values $\{e_{i,t}\}=\{1.2,1.3,1.4,1.5\}$; samples of the renewable supply $\{P_{i,t}^{\rm r}\}$ are generated from a uniform distribution with support $[10,50]$ kWh; and the bandwidth cost is set to $c_{i,j}^{\rm d}=40/B_{i,j}$, with bandwidth limits $\{B_{i,j}\}$ generated from a uniform distribution with support $[10,100]$.
DC computing capacities are $\{D_i\}=\{200,150,100,100\}$,
and uniformly distributed over $[10,150]$ workloads $\{v_{j,t}\}$ arrive at each MN $j$. Unless specified otherwise, default parameter values are chosen as $\mathbf{b}=\sqrt{\mu}\log^2(\mu)\cdot \mathbf{1}$, and $\mu=0.1$.
Online SAGA is compared with two alternatives: a) the standard stochastic dual subgradient (SDG) algorithm in e.g., \cite{chen2016,neely2010}; and b) a `hot-started' version of SDG that we term SDG+, in which initialization is provided using offline SAGA with $N_{\rm off}$ training samples.
Note that the approaches in \cite{huang2014} and \cite{zhang2016} rely on histogram-based pdf estimation to approximate the ensemble dual problem \eqref{eq.dual-prob}.
For our setting however, directly estimating the pdf of a multivariate continuous random state $\mathbf{s}_t\in\mathbb{R}^{3I+J}$ is a considerably cumbersome work.
Even if we only discretize each entry of $\mathbf{s}_t$ into $10$ levels, the number of possible network states can be $10^{16}$ or $10^{80}$ in our simulated settings ($I\!=\!J\!=\!4$, or, $I\!=\!J\!=\!20$), thus they are not simulated for comparison.

\begin{figure}[t]
\centering
\vspace{-0.2cm}
\includegraphics[height=0.3\textwidth]{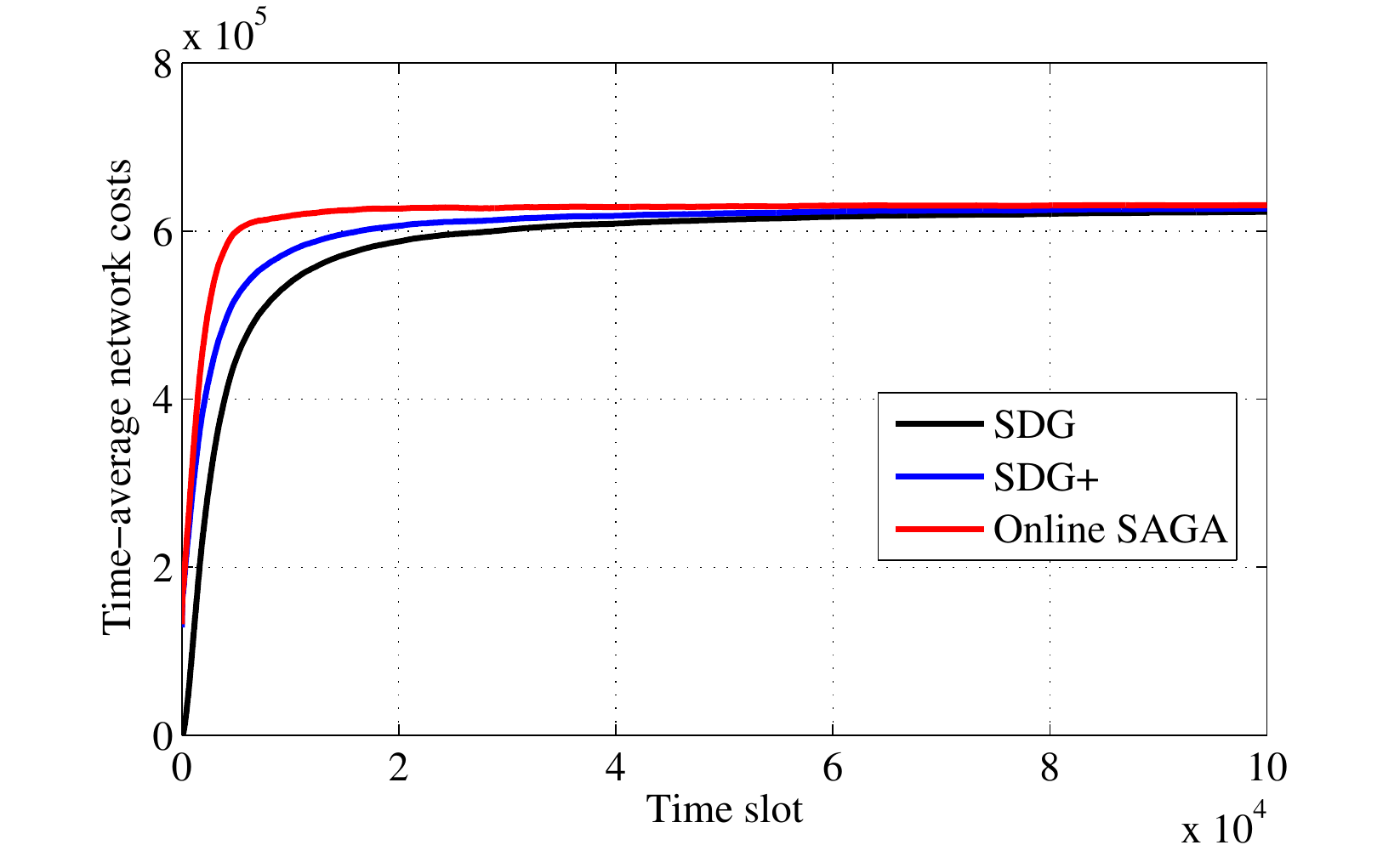}
\vspace{-0.6cm}
\caption{Comparison of time-average costs ($I=J=4$, $N_{\rm off}=1,000$, $K=2$)}
\label{Fig.obj}
\end{figure}

\begin{figure}[t]
\centering
\vspace{-0.2cm}
\includegraphics[height=0.3\textwidth]{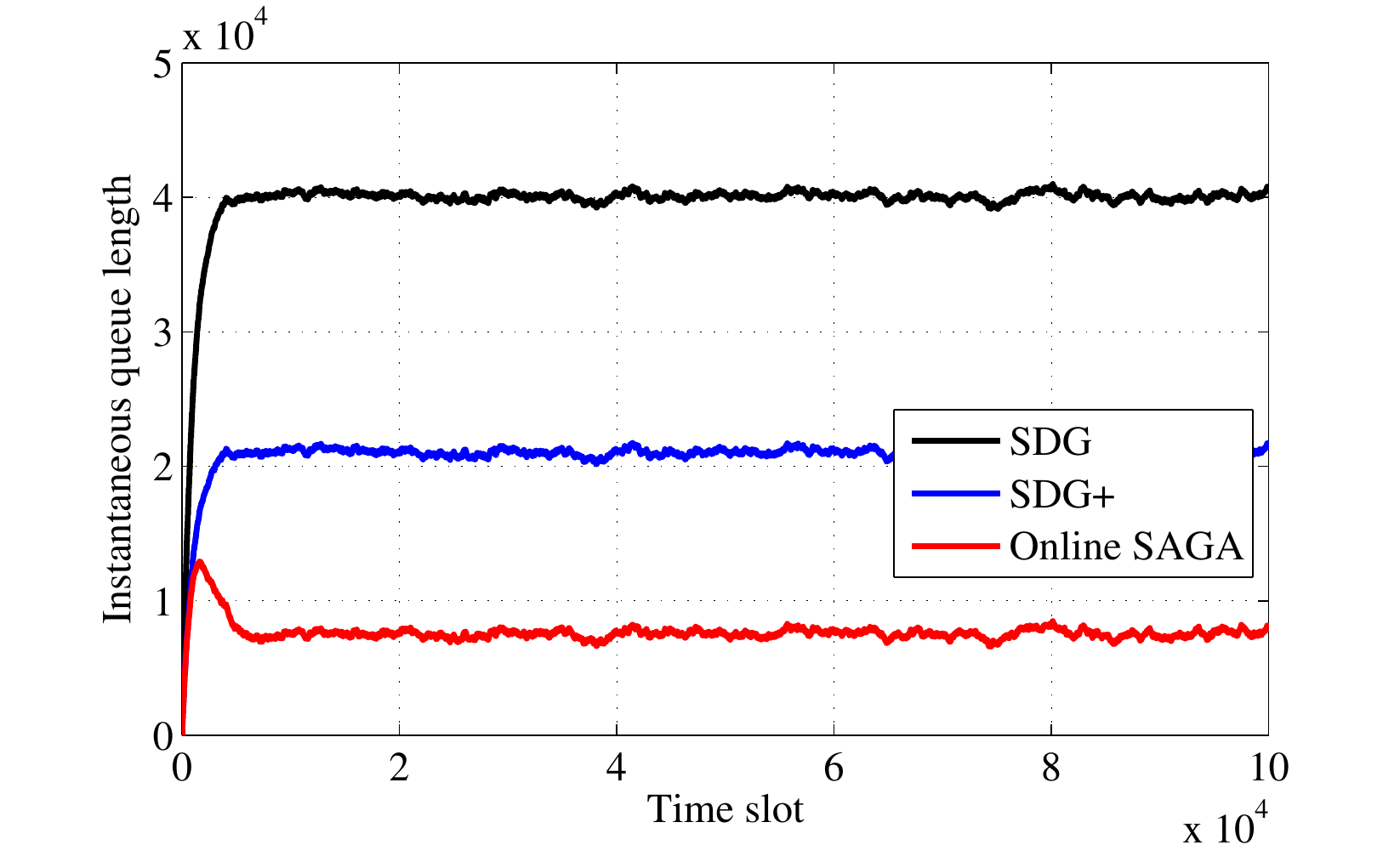}
\vspace{-0.6cm}
\caption{Comparison of queue lengths ($I=J=4$, $N_{\rm off}=1,000$, $K=2$)}
\label{Fig.DCQ}
\vspace{-0.4cm}
\end{figure}

\begin{figure}[t]
\centering
\vspace{-0.2cm}
\includegraphics[height=0.3\textwidth]{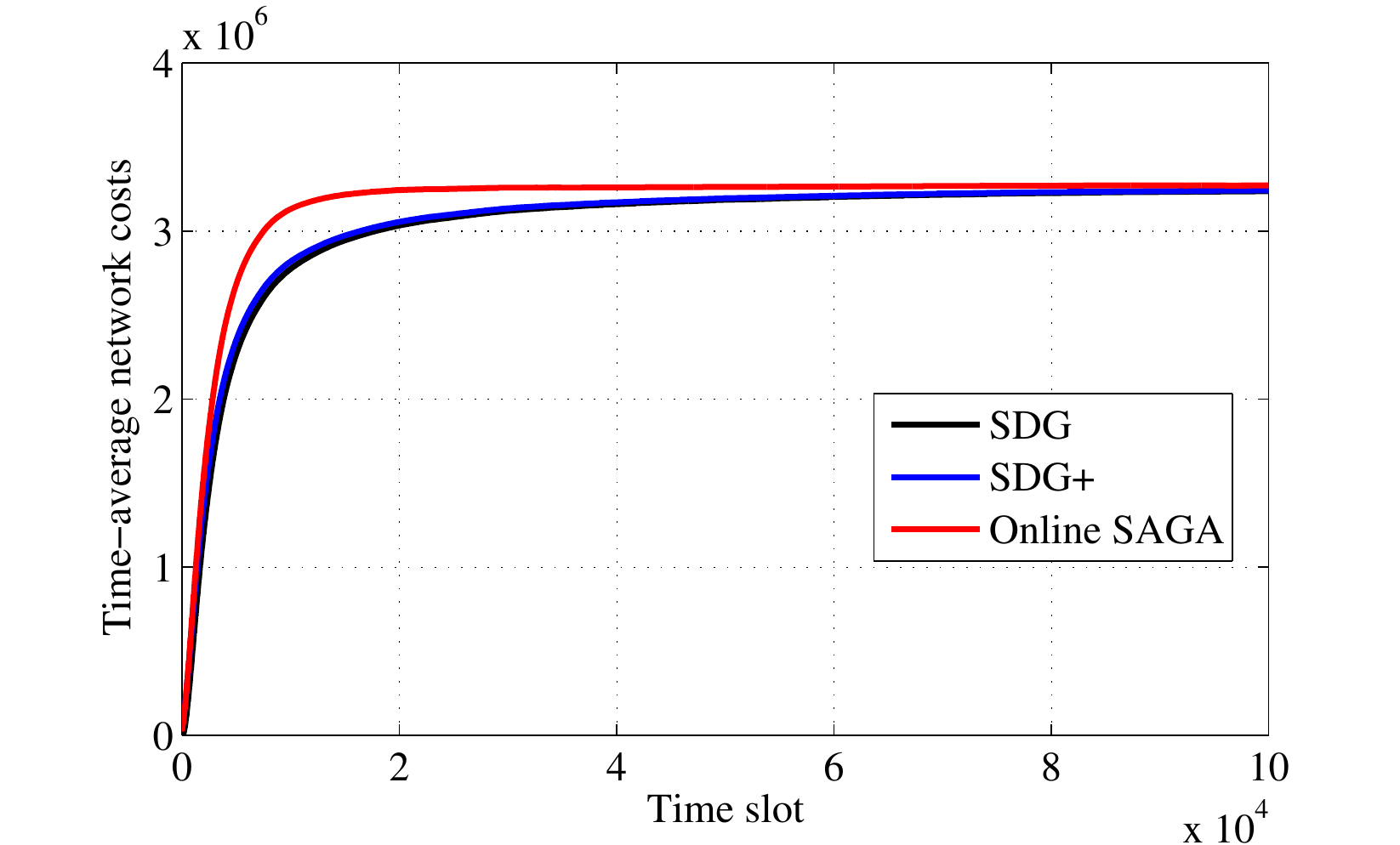}
\vspace{-0.6cm}
\caption{Comparison of time-average costs ($I\!=\!J\!=\!20$, $N_{\rm off}=1,000$, $K=2$)}
\label{Fig.obj2}
\end{figure}

\begin{figure}[t]
\centering
\vspace{-0.2cm}
\includegraphics[height=0.3\textwidth]{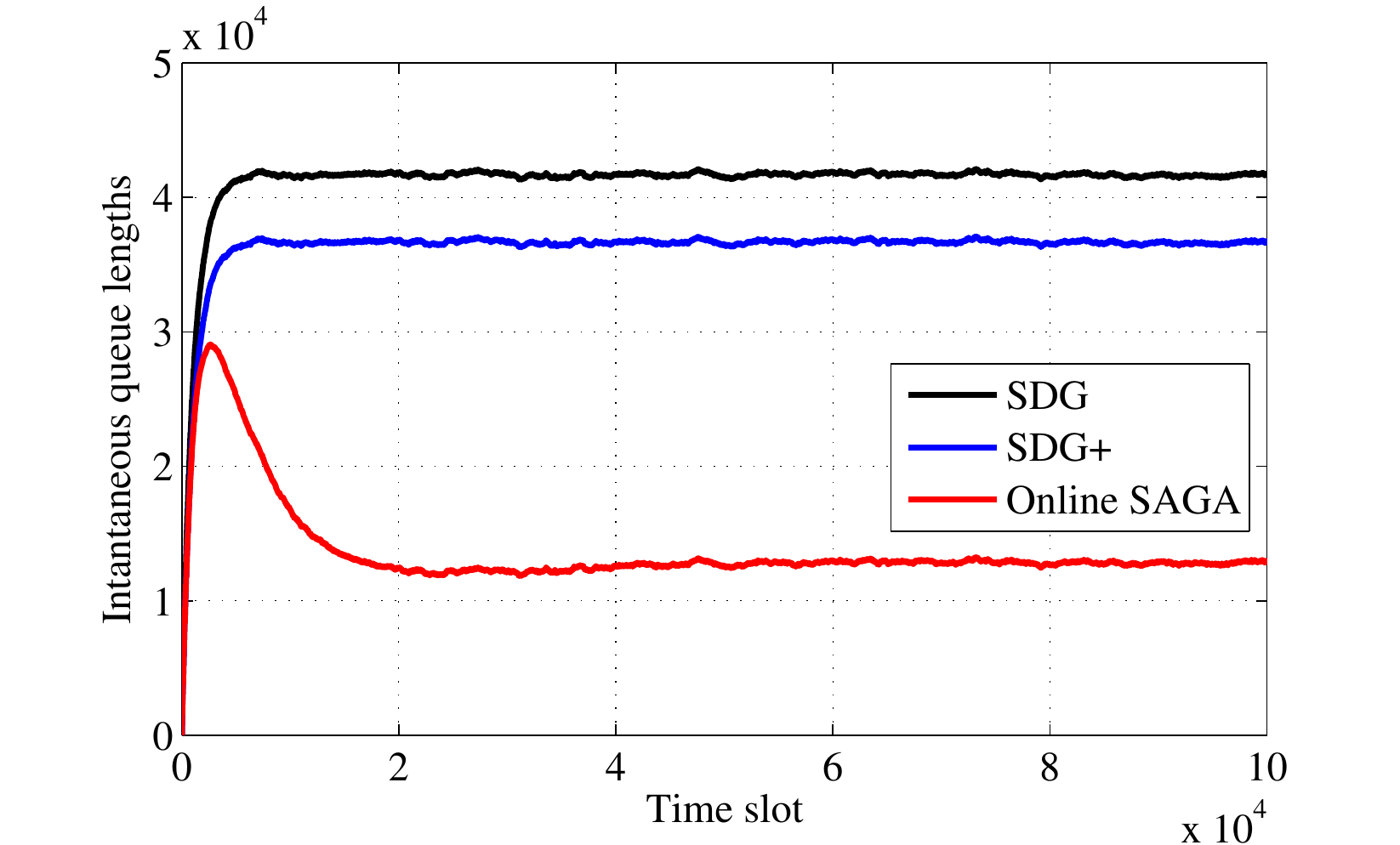}
\vspace{-0.6cm}
\caption{Comparison of queue lengths ($I=J=20$, $N_{\rm off}=1,000$, $K=2$)}
\label{Fig.DCQ2}
\vspace{-0.4cm}
\end{figure}

\begin{figure}[t]
\centering
\vspace{-0.2cm}
\includegraphics[height=0.3\textwidth]{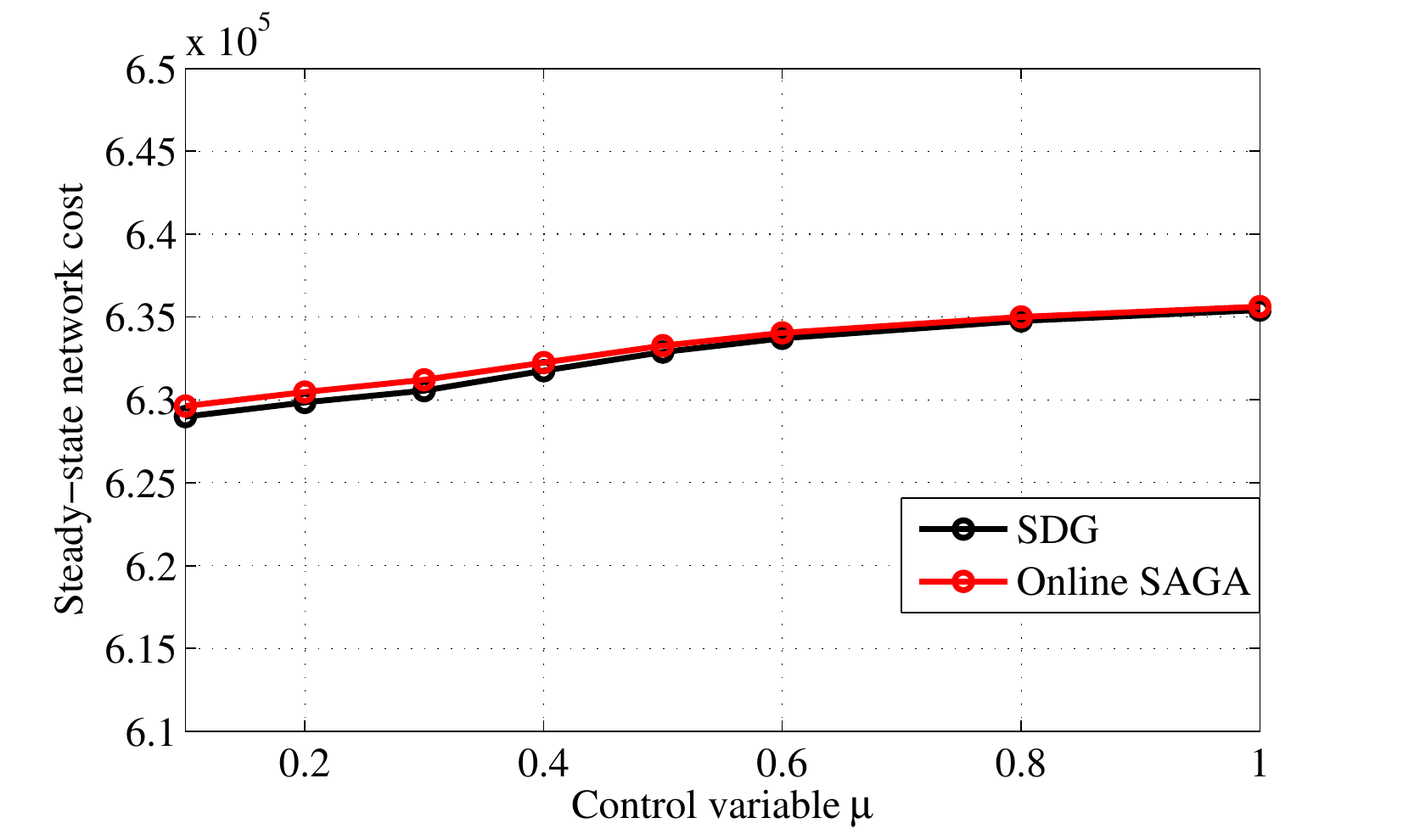}
\vspace{-0.6cm}
\caption{Network cost as a function of $\mu$ ($I=J=4$, $N_{\rm off}=0$, $K=2$)}
\label{Fig.mu2}
\end{figure}

\begin{figure}[t]
\centering
\vspace{-0.2cm}
\includegraphics[height=0.3\textwidth]{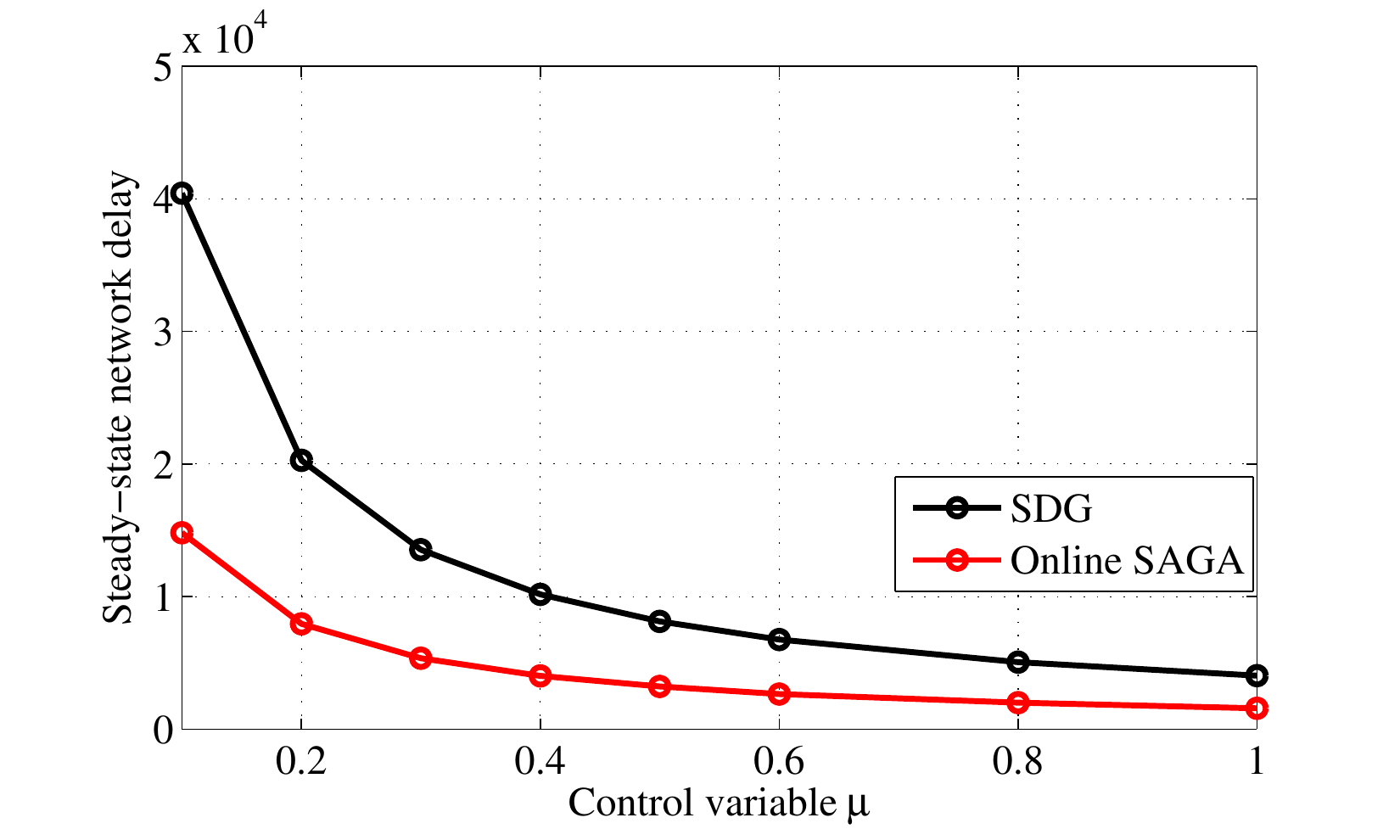}
\vspace{-0.6cm}
\caption{Network delay as a function of $\mu$ ($I=J=4$, $N_{\rm off}=0$, $K=2$)}
\label{Fig.mu1}
\vspace{-0.4cm}
\end{figure}

Performance is first compared with moderate size $N_{\rm off}=1,000$ in Figs. \ref{Fig.obj}-\ref{Fig.DCQ}. For the network cost, the three algorithms converge to the same value, with the online SAGA exhibiting fastest convergence since it quickly achieves the optimal scheduling by learning from training samples.
In addition, leveraging its learning-while-adapting attribute, online SAGA incurs considerably lower delay as the average queue size is only 40\% of that under SDG+, and 20\% of that under SDG. Clearly, offline training improves also delay performance of SDG+, and online SAGA further improves relative to SDG+ thanks to online (in addition to offline) learning.

These two metrics are further compared in Figs. \ref{Fig.obj2}-\ref{Fig.DCQ2} over a larger network with $I=20$ DCs and $J=20$ MNs. While the three algorithms exhibit similar performance in terms of network cost, the average delay of online SAGA stays noticeably lower than its alternatives. The delay performance of SDG+ approaches that of SDG (cf. the gap in Fig. \ref{Fig.DCQ}), which indicates that $N_{\rm off}=1,000$ training samples may be not enough to obtain a sufficiently accurate initialization for SDG+ over such a large network.

For fairness to SDG, the effects of tuning $\mu$ and $K$ are studied in Figs. \ref{Fig.mu2}-\ref{Fig.N2} without training ($N_{\rm off}=0$). For all choices of $\mu$, the online SAGA attains a much smaller delay at network cost comparable to that of SDG (cf. Fig. \ref{Fig.mu2}), and its delay increases much slower as $\mu$ decreases (cf. Fig. \ref{Fig.mu1}), thanks to a better delay-cost tradeoff $[\mu,\frac{1}{\sqrt{\mu}}\log^2(\mu)]$.
As shown by Lemma \ref{error-rec}, the per-slot learning error decreases as the number of learning iterations $K$ increases. Finally, Figs. \ref{Fig.N1} and \ref{Fig.N2} delineate the performance versus learning complexity tradeoff as a function of $K$. Increasing $K$ slightly improves convergence speed, and significantly reduces the average delay at the expense of higher computational complexity.

\section{Concluding Remarks}\label{sec.Cons}
A comprehensive approach was developed, which permeates benefits of stochastic averaging to considerably improve gradient updates of online dual variable iterations involved in constrained optimization tasks. The impact of integrating offline and online learning while adapting was demonstrated in a stochastic resource allocation problem emerging from cloud networks with data centers and renewables.
The considered stochastic resource allocation task was formulated with the goal of learning Lagrange multipliers in a fast and efficient manner.
Casting batch learning as maximizing the sum of finite concave functions, machine learning tools were adopted to leverage training samples for offline learning initial multiplier estimates.
The latter provided a ``hot-start'' of novel learning-while-adapting online SAGA.
The novel scheduling protocol offers low-complexity yet efficient learning, and adaptation, while guaranteeing queue stability.
Finally, performance analysis - both analytical and simulation based - demonstrated that the novel approach markedly improves network resource allocation performance, at the cost of affordable extra memory to store gradient values, and just one extra sample evaluation per slot.

This novel offline-aided-online optimization framework opens up some
new research directions, which include studying how to further endow performance improvements under other system configurations (e.g., local polyhedral property), and how to establish performance guarantee in the almost sure sense.
Fertilizing other emerging cyber-physical systems (e.g., power networks) is a promising research direction, and accounting for possibly non-stationary dynamics is also of practical interest.


\begin{thebibliography}{10}
\providecommand{\url}[1]{#1}
\csname url@samestyle\endcsname
\providecommand{\newblock}{\relax}
\providecommand{\bibinfo}[2]{#2}
\providecommand{\BIBentrySTDinterwordspacing}{\spaceskip=0pt\relax}
\providecommand{\BIBentryALTinterwordstretchfactor}{4}
\providecommand{\BIBentryALTinterwordspacing}{\spaceskip=\fontdimen2\font plus
\BIBentryALTinterwordstretchfactor\fontdimen3\font minus
  \fontdimen4\font\relax}
\providecommand{\BIBforeignlanguage}[2]{{%
\expandafter\ifx\csname l@#1\endcsname\relax
\typeout{** WARNING: IEEEtran.bst: No hyphenation pattern has been}%
\typeout{** loaded for the language `#1'. Using the pattern for}%
\typeout{** the default language instead.}%
\else
\language=\csname l@#1\endcsname
\fi
#2}}
\providecommand{\BIBdecl}{\relax}
\BIBdecl

\bibitem{chen2016b}
T.~Chen, A.~Mokhtari, X.~Wang, A.~Ribeiro, and G.~B. Giannakis, ``A data-driven
  approach to stochastic network optimization,'' in \emph{Proc. {IEEE} Global
  Conf. on Signal and Info. Process.}, Washington, DC, Dec. 2016.

\bibitem{Data2015}
\BIBentryALTinterwordspacing
J.~Whitney and P.~Delforge, ``Data center efficiency assessment,'' \emph{Issue
  Paper}, 2015. [Online]. Available:
  \url{http://www.nrdc.org/energy/data-center-efficiency-assessment.asp}
\BIBentrySTDinterwordspacing

\bibitem{gao2012}
P.~X. Gao, A.~R. Curtis, B.~Wong, and S.~Keshav, ``It's not easy being green,''
  in \emph{Proc. ACM SIGCOMM}, vol.~42, no.~4, Helsinki, Finland, Aug. 2012,
  pp. 211--222.

\bibitem{guo14}
Y.~Guo, Y.~Gong, Y.~Fang, P.~P. Khargonekar, and X.~Geng, ``Energy and network
  aware workload management for sustainable data centers with thermal
  storage,'' \emph{{IEEE} Trans. Parallel and Distrib. Syst.}, vol.~25, no.~8,
  pp. 2030--2042, Aug. 2014.

\bibitem{Urg11}
R.~Urgaonkar, B.~Urgaonkar, M.~Neely, and A.~Sivasubramaniam, ``Optimal power
  cost management using stored energy in data centers,'' in \emph{Proc. ACM
  SIGMETRICS}, San Jose, CA, Jun. 2011, pp. 221--232.

\bibitem{chen2016}
T.~Chen, X.~Wang, and G.~B. Giannakis, ``Cooling-aware energy and workload
  management in data centers via stochastic optimization,'' \emph{{IEEE} J.
  Sel. Topics Signal Process.}, vol.~10, no.~2, pp. 402--415, Mar. 2016.

\bibitem{Yao12data}
Y.~Yao, L.~Huang, A.~Sharma, L.~Golubchik, and M.~Neely, ``Data centers power
  reduction: A two time scale approach for delay tolerant workloads,'' in
  \emph{Proc. IEEE INFOCOM}, Orlando, FL, Mar. 2012, pp. 1431--1439.

\bibitem{chen2016jsac}
T.~Chen, Y.~Zhang, X.~Wang, and G.~B. Giannakis, ``Robust workload and energy
  management for sustainable data centers,'' \emph{IEEE J. Sel. Areas Commun.},
  vol.~34, no.~3, pp. 651--664, Mar. 2016.

\bibitem{liu2016}
J.~Liu, A.~Eryilmaz, N.~B. Shroff, and E.~S. Bentley, ``Heavy-ball: A new
  approach to tame delay and convergence in wireless network optimization,'' in
  \emph{Proc. IEEE INFOCOM}, San Francisco, CA, Apr. 2016.

\bibitem{li2015}
B.~Li, R.~Li, and A.~Eryilmaz, ``On the optimal convergence speed of wireless
  scheduling for fair resource allocation,'' \emph{{IEEE/ACM} Trans.
  Networking}, vol.~23, no.~2, pp. 631--643, Apr. 2015.

\bibitem{zargham2013}
M.~Zargham, A.~Ribeiro, and A.~Jadbabaie, ``Accelerated backpressure
  algorithm,'' \emph{arXiv preprint:1302.1475}, Feb. 2013.

\bibitem{ribeiro2010}
A.~Ribeiro, ``Ergodic stochastic optimization algorithms for wireless
  communication and networking,'' \emph{{IEEE} Trans. Signal Processing},
  vol.~58, no.~12, pp. 6369--6386, Jul. 2010.

\bibitem{huang2011}
L.~Huang and M.~J. Neely, ``Delay reduction via {L}agrange multipliers in
  stochastic network optimization,'' \emph{{IEEE} Trans. Automat. Contr.},
  vol.~56, no.~4, pp. 842--857, Apr. 2011.

\bibitem{huang2013}
------, ``Utility optimal scheduling in energy-harvesting networks,''
  \emph{{IEEE/ACM} Trans. Networking}, vol.~21, no.~4, pp. 1117--1130, Aug.
  2013.

\bibitem{valls2015}
V.~Valls and D.~J. Leith, ``Descent with approximate multipliers is enough:
  Generalising max-weight,'' \emph{arXiv preprint:1511.02517}, Nov. 2015.

\bibitem{vapnik2013}
V.~Vapnik, \emph{The Nature of Statistical Learning Theory}.\hskip 1em plus
  0.5em minus 0.4em\relax Berlin, Germany: Springer Science \& Business Media,
  2013.

\bibitem{huang2014}
L.~Huang, X.~Liu, and X.~Hao, ``The power of online learning in stochastic
  network optimization,'' in \emph{Proc. ACM SIGMETRICS}, vol.~42, no.~1, New
  York, NY, Jun. 2014, pp. 153--165.

\bibitem{zhang2016}
T.~Zhang, H.~Wu, X.~Liu, and L.~Huang, ``Learning-aided scheduling for mobile
  virtual network operators with {QoS} constraints,'' in \emph{Proc. WiOpt},
  Tempe, AZ, May 2016.

\bibitem{defazio2014}
A.~Defazio, F.~Bach, and S.~Lacoste-Julien, ``{SAGA}: {A} fast incremental
  gradient method with support for non-strongly convex composite objectives,''
  in \emph{Proc. Advances in Neural Info. Process. Syst.}, Montreal, Canada,
  Dec. 2014, pp. 1646--1654.

\bibitem{Daneshmand16}
H.~Daneshmand, A.~Lucchi, and T.~Hofmann, ``Starting small - learning with
  adaptive sample sizes,'' in \emph{Proc. Intl. Conf. on Mach. Learn.}, New
  York, NJ, Jun. 2016, pp. 1463--1471.

\bibitem{neely2010}
M.~J. Neely, ``Stochastic network optimization with application to
  communication and queueing systems,'' \emph{Synthesis Lectures on
  Communication Networks}, vol.~3, no.~1, pp. 1--211, 2010.

\bibitem{xu2013}
H.~Xu and B.~Li, ``Joint request mapping and response routing for
  geo-distributed cloud services,'' in \emph{Proc. IEEE INFOCOM}, Turin, Italy,
  Apr. 2013, pp. 854--862.

\bibitem{sun2016}
S.~Sun, M.~Dong, and B.~Liang, ``Distributed real-time power balancing in
  renewable-integrated power grids with storage and flexible loads,''
  \emph{{IEEE} Trans. Smart Grid}, vol.~7, no.~5, pp. 2337--2349, Sep. 2016.

\bibitem{gatsis2010}
N.~Gatsis, A.~Ribeiro, and G.~B. Giannakis, ``A class of convergent algorithms
  for resource allocation in wireless fading networks,'' \emph{{IEEE} Trans.
  Wireless Commun.}, vol.~9, no.~5, pp. 1808--1823, May 2010.

\bibitem{gregoire2015}
J.~Gregoire, X.~Qian, E.~Frazzoli, A.~de~La~Fortelle, and T.~Wongpiromsarn,
  ``Capacity-aware backpressure traffic signal control,'' \emph{{IEEE} Trans.
  Control of Network Systems}, vol.~2, no.~2, pp. 164--173, June 2015.

\bibitem{bertsekas1999}
D.~P. Bertsekas, \emph{Nonlinear Programming}.\hskip 1em plus 0.5em minus
  0.4em\relax Belmont, MA: Athena scientific, 1999.

\bibitem{robbins1951}
H.~Robbins and S.~Monro, ``A stochastic approximation method,'' \emph{Annals of
  Mathematical Statistics}, vol.~22, no.~3, pp. 400--407, Sep. 1951.

\bibitem{kong1995}
V.~Kong and X.~Solo, \emph{Adaptive {S}ignal {P}rocessing {A}lgorithms}.\hskip
  1em plus 0.5em minus 0.4em\relax Upper Saddle River, NJ: Prentice Hall, 1995.

\bibitem{chen2017tpds}
T.~Chen, A.~G. Marques, and G.~B. Giannakis, ``{DGLB}: Distributed stochastic
  geographical load balancing over cloud networks,'' \emph{{IEEE} Trans.
  Parallel and Distrib. Syst.}, to appear, 2017.

\bibitem{eryilmaz2006}
A.~Eryilmaz and R.~Srikant, ``Joint congestion control, routing, and {MAC} for
  stability and fairness in wireless networks,'' \emph{IEEE J. Sel. Areas
  Commun.}, vol.~24, no.~8, pp. 1514--1524, Aug. 2006.

\bibitem{Geor06}
L.~Georgiadis, M.~Neely, and L.~Tassiulas, ``Resource allocation and
  cross-layer control in wireless networks,'' \emph{Found. and Trends in
  Networking}, vol.~1, pp. 1--144, 2006.

\bibitem{roux2012}
N.~L. Roux, M.~Schmidt, and F.~R. Bach, ``A stochastic gradient method with an
  exponential convergence rate for finite training sets,'' in \emph{Proc.
  Advances in Neural Info. Process. Syst.}, Dec. 2012, pp. 2663--2671.

\bibitem{bousquet2008}
O.~Bousquet and L.~Bottou, ``The tradeoffs of large scale learning,'' in
  \emph{Proc. Advances in Neural Info. Process. Syst.}, Dec. 2008, pp.
  161--168.

\bibitem{mokhtari2016}
A.~Mokhtari, H.~Daneshmand, A.~Lucchi, T.~Hofmann, and A.~Ribeiro, ``Adaptive
  {N}ewton method for empirical risk minimization to statistical accuracy,'' in
  \emph{Proc. Advances in Neural Info. Process. Syst.}, Barcelona, Spain, Dec.
  2016, pp. 4062--4070.

\bibitem{yu2016}
H.~Yu and M.~J. Neely, ``A low complexity algorithm with $
  \mathcal{O}(\sqrt{T}) $ regret and constraint violations for online convex
  optimization with long term constraints,'' \emph{arXiv preprint:1604.02218},
  Apr. 2016.

\bibitem{koshal2011}
J.~Koshal, A.~Nedic, and U.~V. Shanbhag, ``Multiuser optimization: distributed
  algorithms and error analysis,'' \emph{SIAM J. Optimization}, vol.~21, no.~3,
  pp. 1046--1081, Jul. 2011.

\bibitem{meyn2012}
S.~P. Meyn and R.~L. Tweedie, \emph{Markov Chains and Stochastic
  Stability}.\hskip 1em plus 0.5em minus 0.4em\relax Berlin, Germany: Springer
  Science \& Business Media, 2012.

\end{thebibliography}

\vspace{-0.2cm}
\appendix
\subsection{Sub-optimality of the regularized dual problem}
We first state the sub-optimality brought by subtracting an $\ell_2$-regularizer in \eqref{eq.dual-func}.
\begin{lemma}\label{error-reg}
Let $\{\mathbf{x}_t^*\}$ and $\bm{\lambda}^*$ denote the optimal arguments of the primal and dual problems in \eqref{eq.reform2} and \eqref{eq.dual-prob}, respectively; and correspondingly $\{\hat{\mathbf{x}}_t^*\}$ and $\hat{\bm{\lambda}}^*$ for the primal and the $\ell_2$-regularized dual problem
\begin{align}\label{eq.regdual-prob}
 \max_{\bm{\lambda}\geq \mathbf{0}} \, {\cal D}(\bm{\lambda})-\frac{\epsilon}{2}\|\bm{\lambda}\|^2.
\end{align}
For $\epsilon>0$, it then holds under {(as1)} and {(as2)} that
	\begin{equation}
		\mathbb{E}[\|\mathbf{x}_t^*-\hat{\mathbf{x}}_t^*\|^2]\leq \frac{\epsilon}{2\sigma}\left(\|\bm{\lambda}^*\|^2-\|\hat{\bm{\lambda}}^*\|^2\right),~\forall~\bm{\lambda}^*\in \bm{\Lambda}^*
	\end{equation}
	where $\bm{\Lambda}^*$ denotes the set of optimal dual variables for the original dual problem \eqref{eq.dual-prob}.
\end{lemma}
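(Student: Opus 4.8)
The plan is to translate the primal gap $\mathbb{E}[\|\mathbf{x}_t^*-\hat{\mathbf{x}}_t^*\|^2]$ into a dual-side quantity via the strong convexity of the instantaneous Lagrangian, and then to bound that quantity using the first-order optimality conditions of the two constrained dual maximizations. Under {(as2)} the map $\mathbf{x}_t\mapsto{\cal L}_t(\mathbf{x}_t,\bm{\lambda})=\Psi_t(\mathbf{x}_t)+\bm{\lambda}^{\top}(\mathbf{A}\mathbf{x}_t+\mathbf{c}_t)$ is $\sigma$-strongly convex for every $\bm{\lambda}$, so the Lagrangian minimizer over ${\cal X}$ is unique; writing $\mathbf{x}_t(\bm{\lambda}):=\arg\min_{\mathbf{x}_t\in{\cal X}}{\cal L}_t(\mathbf{x}_t,\bm{\lambda})$ we have $\mathbf{x}_t^*=\mathbf{x}_t(\bm{\lambda}^*)$ and (since the $\ell_2$ term only modifies the dual update, not the inner minimization) $\hat{\mathbf{x}}_t^*=\mathbf{x}_t(\hat{\bm{\lambda}}^*)$.

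First I would apply the $\sigma$-strong-convexity lower bound of ${\cal L}_t(\cdot,\bm{\lambda}^*)$ at its minimizer $\mathbf{x}_t^*$ evaluated at the point $\hat{\mathbf{x}}_t^*$, and the analogous bound for ${\cal L}_t(\cdot,\hat{\bm{\lambda}}^*)$ at $\hat{\mathbf{x}}_t^*$ evaluated at $\mathbf{x}_t^*$. Summing the two inequalities, the $\Psi_t$ terms cancel and the linear terms collapse into a single cross term, giving the pathwise inequality
\[
\sigma\|\mathbf{x}_t^*-\hat{\mathbf{x}}_t^*\|^2\le(\bm{\lambda}^*-\hat{\bm{\lambda}}^*)^{\top}\mathbf{A}(\hat{\mathbf{x}}_t^*-\mathbf{x}_t^*).
\]
Taking expectation over $\mathbf{s}_t$ and invoking Danskin's (envelope) theorem, i.e.\ $\nabla{\cal D}(\bm{\lambda})=\mathbb{E}[\mathbf{A}\mathbf{x}_t(\bm{\lambda})+\mathbf{c}_t]$, turns the right-hand side into $(\bm{\lambda}^*-\hat{\bm{\lambda}}^*)^{\top}\big(\nabla{\cal D}(\hat{\bm{\lambda}}^*)-\nabla{\cal D}(\bm{\lambda}^*)\big)$, so that $\sigma\,\mathbb{E}[\|\mathbf{x}_t^*-\hat{\mathbf{x}}_t^*\|^2]\le\nabla{\cal D}(\bm{\lambda}^*)^{\top}(\hat{\bm{\lambda}}^*-\bm{\lambda}^*)+\nabla{\cal D}(\hat{\bm{\lambda}}^*)^{\top}(\bm{\lambda}^*-\hat{\bm{\lambda}}^*)$.

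Next I would use the variational inequalities characterizing the two maximizers over $\{\bm{\lambda}\ge\mathbf{0}\}$: optimality of $\bm{\lambda}^*$ for ${\cal D}$ tested against the feasible point $\hat{\bm{\lambda}}^*$ gives $\nabla{\cal D}(\bm{\lambda}^*)^{\top}(\hat{\bm{\lambda}}^*-\bm{\lambda}^*)\le0$, and optimality of $\hat{\bm{\lambda}}^*$ for ${\cal D}(\cdot)-\tfrac{\epsilon}{2}\|\cdot\|^2$ tested against $\bm{\lambda}^*$ gives $\nabla{\cal D}(\hat{\bm{\lambda}}^*)^{\top}(\bm{\lambda}^*-\hat{\bm{\lambda}}^*)\le\epsilon\,\hat{\bm{\lambda}}^{*\top}(\bm{\lambda}^*-\hat{\bm{\lambda}}^*)$. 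Combining these yields $\sigma\,\mathbb{E}[\|\mathbf{x}_t^*-\hat{\mathbf{x}}_t^*\|^2]\le\epsilon\,\hat{\bm{\lambda}}^{*\top}(\bm{\lambda}^*-\hat{\bm{\lambda}}^*)$, and the elementary identity $\|\bm{\lambda}^*-\hat{\bm{\lambda}}^*\|^2=\|\bm{\lambda}^*\|^2-\|\hat{\bm{\lambda}}^*\|^2-2\hat{\bm{\lambda}}^{*\top}(\bm{\lambda}^*-\hat{\bm{\lambda}}^*)\ge0$ gives $\hat{\bm{\lambda}}^{*\top}(\bm{\lambda}^*-\hat{\bm{\lambda}}^*)\le\tfrac12(\|\bm{\lambda}^*\|^2-\|\hat{\bm{\lambda}}^*\|^2)$, from which the claimed bound follows. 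Only feasibility and first-order optimality of $\bm{\lambda}^*$ were used, so the inequality holds for every $\bm{\lambda}^*\in\bm{\Lambda}^*$.

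The step I expect to be the main obstacle is the passage from the pathwise strong-convexity inequality to its expectation form as a statement about $\nabla{\cal D}$: this hinges on the envelope identification $\nabla{\cal D}_t(\bm{\lambda})=\mathbf{A}\mathbf{x}_t(\bm{\lambda})+\mathbf{c}_t$ together with the integrability needed to exchange expectation and gradient, which is exactly where the bounded-support assumption {(as1)} and the uniqueness of the Lagrangian minimizer from {(as2)} must be carefully invoked. A minor companion point is justifying that the ``regularized primal'' $\hat{\mathbf{x}}_t^*$ is indeed $\mathbf{x}_t(\hat{\bm{\lambda}}^*)$, so that both strong-convexity inequalities are legitimately applied at genuine Lagrangian minimizers.
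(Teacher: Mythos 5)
Your argument is correct and is essentially the proof the paper intends: the paper's own ``proof'' is a one-line citation to \cite[Lemma 3.2]{koshal2011}, and your chain --- strong convexity of ${\cal L}_t(\cdot,\bm{\lambda})$ at the two Lagrangian minimizers, expectation plus Danskin to convert the cross term into $\nabla{\cal D}$ evaluations, the two variational inequalities over $\{\bm{\lambda}\geq\mathbf{0}\}$, and the identity $2\hat{\bm{\lambda}}^{*\top}(\bm{\lambda}^*-\hat{\bm{\lambda}}^*)\leq\|\bm{\lambda}^*\|^2-\|\hat{\bm{\lambda}}^*\|^2$ --- is exactly the Tikhonov-regularization error bound of that reference. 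All steps check out, including the constant $\epsilon/(2\sigma)$ and the observation that the regularizer leaves the inner minimization (hence $\hat{\mathbf{x}}_t^*=\mathbf{x}_t(\hat{\bm{\lambda}}^*)$) untouched.
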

\begin{proof}
  Follows the steps in \cite[Lemma 3.2]{koshal2011}.
\end{proof}

Lemma \ref{error-reg} provides an upper bound on the expected difference between the optimal arguments of the primal \eqref{eq.reform2} and that for \eqref{eq.regdual-prob}, in terms of the difference between the corresponding dual arguments. Clearly, by choosing a relatively small $\epsilon$, the gap between $\mathbf{x}_t^*$ and $\hat{\mathbf{x}}_t^*$ closes.

Using the convexity of $\|\cdot\|^2$, Lemma \ref{error-reg} implies readily the following bound
$\mathbb{E}[\|\mathbf{x}_t^*-\hat{\mathbf{x}}_t^*\|]\leq \sqrt{\epsilon/(2\sigma)}~\max_{\bm{\lambda}^*\in \bm{\Lambda}^*}\|\bm{\lambda}^*\|$,
which in turn implies that with the regularizer present in \eqref{eq.regdual-prob}, $\{\mathbf{x}_t^*\}$ will be ${\cal O}(\sqrt{\epsilon})$-optimal and feasible. The sub-optimality in terms of the objective value can be easily quantified using the Lipschitz gradient condition in {(as2)}.
Clearly, the gap vanishes as $\epsilon \rightarrow 0$, or, as the primal strong convexity constant $\sigma\rightarrow \infty$. As we eventually pursue an ${\cal O}(\mu)$-optimal solution in Theorem \ref{gap-onlineSAGA}, it suffices to set $\epsilon={\cal O}(\mu)$.

\begin{figure}[t]
\centering
\vspace{-0.2cm}
\includegraphics[height=0.3\textwidth]{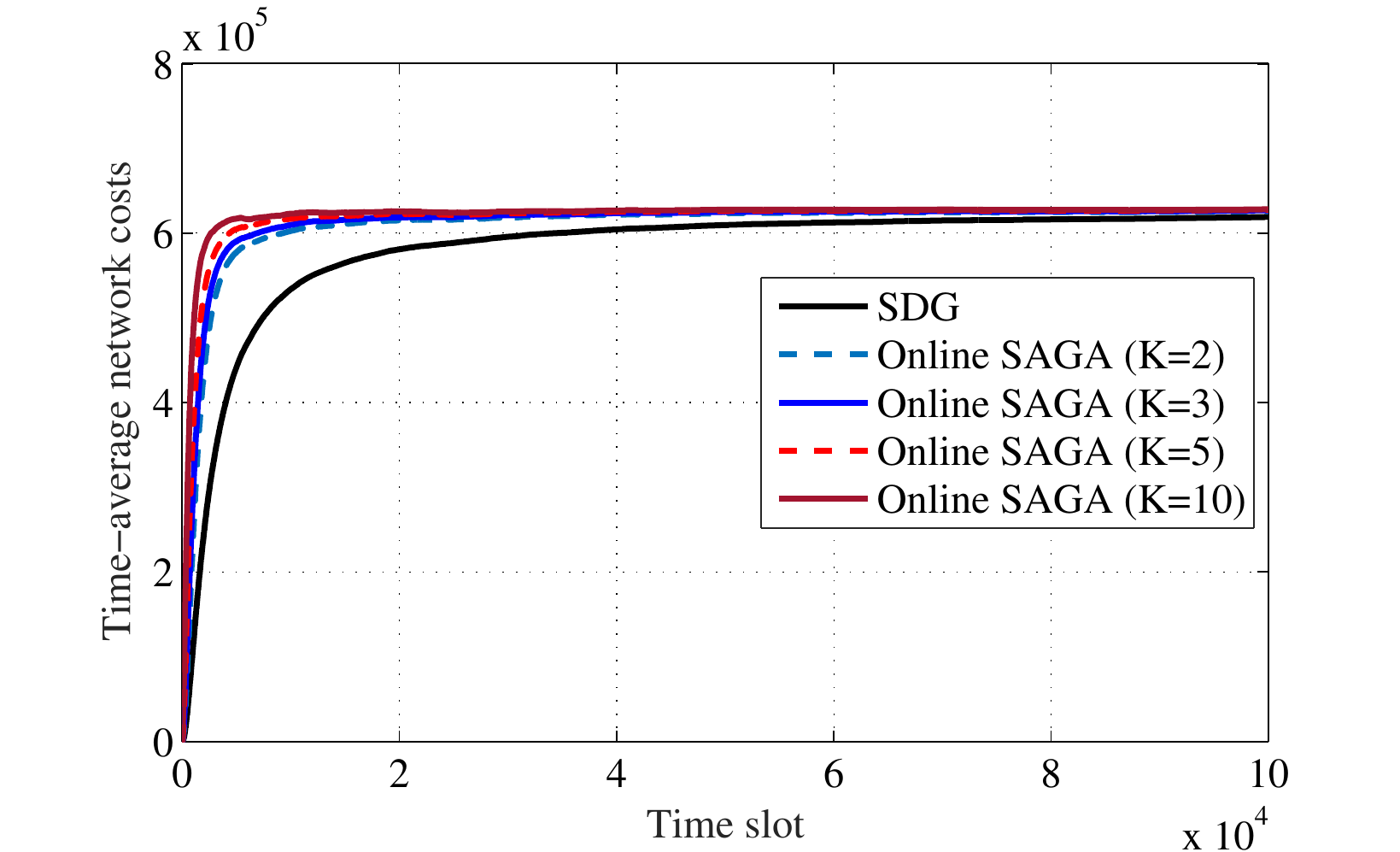}
\vspace{-0.6cm}
\caption{Network cost versus $K$ ($I=J=4$, $N_{\rm off}=0$)}
\label{Fig.N1}
\vspace{-0.2cm}
\end{figure}

\begin{figure}[t]
\centering
\vspace{-0.2cm}
\includegraphics[height=0.3\textwidth]{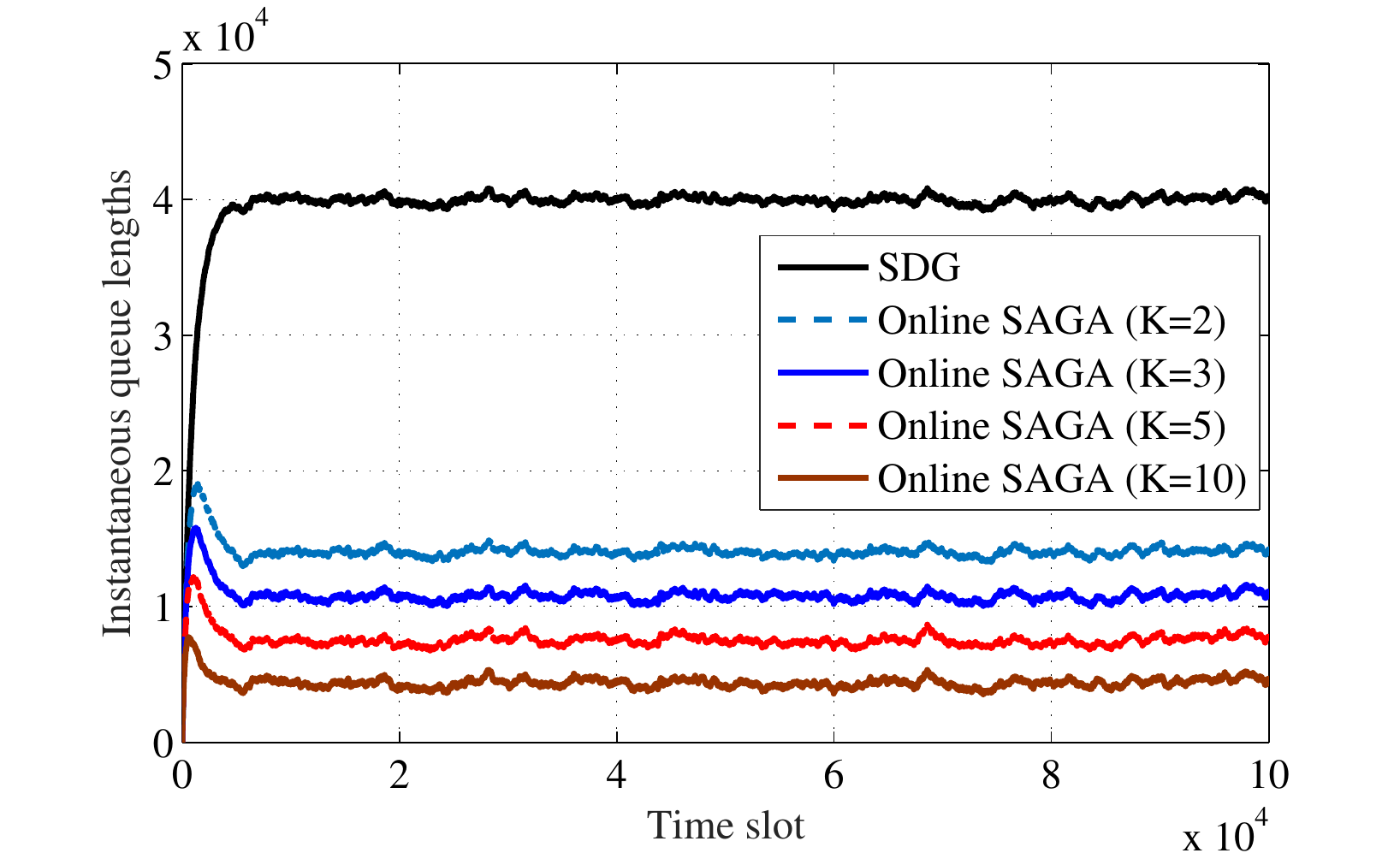}
\vspace{-0.6cm}
\caption{Queue length averaged over all nodes versus $K$ ($I\!=\!J\!=4$, $N_{\rm off}=0$)}
\label{Fig.N2}
\vspace{-0.4cm}
\end{figure}

\subsection{Proof sketch of Theorem \ref{the.linear-rate}}\label{app.linear-rate}
The SAGA in \cite{defazio2014} is originally designed for unconstrained optimization problem in machine learning applications.
Observe though, that it can achieve linear convergence rate when the objective is in a composite form $\min_{\mathbf{x}\in\mathbb{R}^d}\; f(\mathbf{x})+h(\mathbf{x})$,
where $f(\mathbf{x})$ is a strongly-convex and smooth function, and $h(\mathbf{x})$ is a convex and possibly non-smooth regularizer.
To tackle this non-smooth term, instead of pursuing a subgradient direction, the SAGA iteration in \cite{defazio2014} performs a stochastic-averaging step w.r.t. $f(\mathbf{x})$, followed by a proximal step, given by
\begin{equation}
{\rm Prox}_{\eta}^h(\mathbf{x}):=\arg\min_{\mathbf{y}\in \mathbb{R}^d}\left\{h(\mathbf{y})+ \frac{1}{2\eta}\|\mathbf{y}-\mathbf{x}\|^2\right\}.	
\end{equation}

For the constrained optimization problem \eqref{eq.dual-prob2}, we can view the regularizer as an indicator function, namely $h(\mathbf{x}):=0,\,\mathbf{x}\geq \mathbf{0};\,\infty,\,{\rm otherwise}$, which penalizes the argument outside of the feasible region $\mathbf{x}\geq \mathbf{0}$.
In this case, one can show that the proximal operator reduces to the projection operator, namely
\begin{equation}
	{\rm Prox}_{\eta}^h(\mathbf{x})=\arg\min_{\mathbf{y}\geq \mathbf{0}}\;\frac{1}{2}\|\mathbf{y}-\mathbf{x}\|^2=[\mathbf{x}]^+.
\end{equation}
To this end, the proof of linear convergence for the offline SAGA in Algorithm \ref{algo} can follow the steps of that in \cite{defazio2014}.


\subsection{Proof of Lemma \ref{error-rec}}\label{app.B}
	To formally bound the optimization error of online SAGA at each time slot, it is critical to understand the quality of using the iterate learned from a small dataset (e.g., ${\cal S}_{t-1}$) as a hot-start for the learning process in a larger set (e.g., ${\cal S}_t$).
	Intuitively, if the iterate learned from a small dataset can perform reasonably well in an ERM defined by a bigger set, then a desired learning performance of online SAGA can be expected.
	
	To achieve this goal, we first introduce a useful lemma from \cite[Theorem 3]{Daneshmand16} to relate the sub-optimality on an arbitrary set ${\cal S}$ to the sub-optimality bound on its subset $\check{\cal S}$, where $\check{\cal S}\subseteq {\cal S}$.	
	\begin{lemma}\label{small-lemma5}
		Consider the definitions $m:=|\check{\cal S}|$, $n=|{\cal S}|$, with $m<n$, and suppose that $\bm{\lambda}_{\check{\cal S}}$ is an $\omega$-optimal solution for the training subset $\check{\cal S}$ satisfying $\mathbb{E}[\hat{{\cal D}}_{\check{\cal S}}(\bm{\lambda}^*_{\check{\cal S}})-\hat{{\cal D}}_{\check{\cal S}}(\bm{\lambda}_{\check{\cal S}})]\leq \omega$. Under (as1), the sub-optimality gap when using $\bm{\lambda}_{\check{\cal S}}$ in an ERM defined by ${\cal S}$ is given by
		\begin{equation}\label{The3-small}
			\mathbb{E}\left[\hat{{\cal D}}_{\cal S}(\bm{\lambda}^*_{\cal S})-\hat{{\cal D}}_{\cal S}(\bm{\lambda}_{\check{\cal S}})\right]\leq \omega+\frac{n-m}{n}{\cal H}_s(m),
		\end{equation}
		where expectation is over all source of randomness; $\hat{{\cal D}}_{\check{\cal S}}(\bm{\lambda})$ and $\hat{{\cal D}}_{\cal S}(\bm{\lambda})$ are the empirical functions in \eqref{eq.dual-func2} approximated by sample sets $\check{\cal S}$ and ${\cal S}$, respectively; $\bm{\lambda}^*_{\check{\cal S}}$ and $\bm{\lambda}^*_{\cal S}$ are the corresponding two optimal multipliers; and, ${\cal H}_s(m)$ is the statistical error induced by the $m$ samples defined in \eqref{ineq.gene}.
	\end{lemma}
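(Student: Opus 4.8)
The claim \eqref{The3-small} is a purely \emph{statistical-learning} statement relating the empirical objectives in \eqref{eq.dual-func2} on a set and on its subset; it does not involve the SAGA recursion, and the plan is to establish it along the lines of \cite[Theorem 3]{Daneshmand16}. First I would write ${\cal S}=\check{\cal S}\cup{\cal S}'$ with ${\cal S}':={\cal S}\setminus\check{\cal S}$, $|{\cal S}'|=n-m$, so that by linearity of the sample average,
\[
\hat{{\cal D}}_{\cal S}(\bm{\lambda})=\frac{m}{n}\,\hat{{\cal D}}_{\check{\cal S}}(\bm{\lambda})+\frac{n-m}{n}\,\hat{{\cal D}}_{{\cal S}'}(\bm{\lambda}),\qquad\forall\,\bm{\lambda}\ge\mathbf{0},
\]
and then decompose the target gap into a \emph{subset optimization-error} term and a \emph{perturbation} term,
\[
\hat{{\cal D}}_{\cal S}(\bm{\lambda}^*_{\cal S})-\hat{{\cal D}}_{\cal S}(\bm{\lambda}_{\check{\cal S}})=\frac{m}{n}\big[\hat{{\cal D}}_{\check{\cal S}}(\bm{\lambda}^*_{\cal S})-\hat{{\cal D}}_{\check{\cal S}}(\bm{\lambda}_{\check{\cal S}})\big]+\frac{n-m}{n}\big[\hat{{\cal D}}_{{\cal S}'}(\bm{\lambda}^*_{\cal S})-\hat{{\cal D}}_{{\cal S}'}(\bm{\lambda}_{\check{\cal S}})\big].
\]

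For the first term, the maximizer property $\hat{{\cal D}}_{\check{\cal S}}(\bm{\lambda}^*_{\cal S})\le\hat{{\cal D}}_{\check{\cal S}}(\bm{\lambda}^*_{\check{\cal S}})$ together with the hypothesis $\mathbb{E}[\hat{{\cal D}}_{\check{\cal S}}(\bm{\lambda}^*_{\check{\cal S}})-\hat{{\cal D}}_{\check{\cal S}}(\bm{\lambda}_{\check{\cal S}})]\le\omega$ bounds its expectation by $\tfrac{m}{n}\omega\le\omega$. For the second (perturbation) term, the crucial observation is that under (as1) the $n-m$ samples in ${\cal S}'$ are i.i.d.\ and independent of $\check{\cal S}$, hence of the $\check{\cal S}$-measurable iterates $\bm{\lambda}_{\check{\cal S}}$ and $\bm{\lambda}^*_{\check{\cal S}}$, so that $\mathbb{E}_{{\cal S}'}[\hat{{\cal D}}_{{\cal S}'}(\bm{\lambda})]={\cal D}(\bm{\lambda})$ for any $\bm{\lambda}$ that is a function of $\check{\cal S}$ only. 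I would therefore (a) replace $\bm{\lambda}^*_{\cal S}$ inside $\hat{{\cal D}}_{{\cal S}'}(\cdot)$ by the $\check{\cal S}$-measurable surrogate $\bm{\lambda}^*_{\check{\cal S}}$ (the delicate step, discussed below); (b) take $\mathbb{E}_{{\cal S}'}[\cdot]$ to turn the remaining occurrences of $\hat{{\cal D}}_{{\cal S}'}$ into the ensemble ${\cal D}$; and (c) bound the resulting ensemble residuals ${\cal D}(\bm{\lambda}^*_{\check{\cal S}})-\hat{{\cal D}}_{\check{\cal S}}(\bm{\lambda}^*_{\check{\cal S}})$ and $\hat{{\cal D}}_{\check{\cal S}}(\bm{\lambda}_{\check{\cal S}})-{\cal D}(\bm{\lambda}_{\check{\cal S}})$ by the uniform deviation bound $\sup_{\bm{\lambda}\ge\mathbf{0}}|{\cal D}(\bm{\lambda})-\hat{{\cal D}}_{\check{\cal S}}(\bm{\lambda})|\le{\cal H}_s(m)$ from \eqref{ineq.gene}, conditioning on the w.h.p.\ event on which it holds. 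Adding the two contributions and taking total expectation then yields $\mathbb{E}[\hat{{\cal D}}_{\cal S}(\bm{\lambda}^*_{\cal S})-\hat{{\cal D}}_{\cal S}(\bm{\lambda}_{\check{\cal S}})]\le\omega+\tfrac{n-m}{n}{\cal H}_s(m)$, i.e.\ \eqref{The3-small}.

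The main obstacle is exactly step (a): the full-set maximizer $\bm{\lambda}^*_{\cal S}$ \emph{does} depend on the fresh samples ${\cal S}'$, so the unbiasedness trick cannot be applied to $\hat{{\cal D}}_{{\cal S}'}(\bm{\lambda}^*_{\cal S})$ directly, and a naive triangle-inequality detour only produces the weaker quantity $\sup_{\bm{\lambda}}|\hat{{\cal D}}_{{\cal S}'}(\bm{\lambda})-{\cal D}(\bm{\lambda})|\le{\cal H}_s(n-m)$, which is useless when $n-m$ is small (e.g.\ $n=m+1$ in the online phase, where ${\cal S}_t={\cal S}_{t-1}\cup\mathbf{s}_t$). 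The way around it is to compare $\bm{\lambda}^*_{\cal S}$ with $\bm{\lambda}^*_{\check{\cal S}}$ through $\hat{{\cal D}}_{\check{\cal S}}$ --- invoking $\hat{{\cal D}}_{\check{\cal S}}(\bm{\lambda}^*_{\cal S})\le\hat{{\cal D}}_{\check{\cal S}}(\bm{\lambda}^*_{\check{\cal S}})$ and, if the sharp constant is desired, the $\epsilon$-strong concavity of $\hat{{\cal D}}_{\check{\cal S}}$ in (as4) to bound $\|\bm{\lambda}^*_{\cal S}-\bm{\lambda}^*_{\check{\cal S}}\|={\cal O}(\tfrac{n-m}{n})$ and hence control $\hat{{\cal D}}_{{\cal S}'}(\bm{\lambda}^*_{\cal S})-\hat{{\cal D}}_{{\cal S}'}(\bm{\lambda}^*_{\check{\cal S}})$ --- so that the perturbation is measured by the \emph{sample-size-$m$} statistical error ${\cal H}_s(m)$ with prefactor $\tfrac{n-m}{n}$, rather than by a ${\cal H}_s(n-m)$-type term. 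The remaining manipulations (optimality inequalities, triangle inequalities, and the two expectations) are routine and follow \cite[Theorem 3]{Daneshmand16}.
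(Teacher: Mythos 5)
First, note that the paper does not actually prove this statement: Lemma~\ref{small-lemma5} is imported verbatim from \cite[Theorem 3]{Daneshmand16}, so the only question is whether your self-contained argument is sound. Your sample-average decomposition and the treatment of the first term (via $\hat{{\cal D}}_{\check{\cal S}}(\bm{\lambda}^*_{\cal S})\le\hat{{\cal D}}_{\check{\cal S}}(\bm{\lambda}^*_{\check{\cal S}})$ and the hypothesis) are fine, and you correctly locate the obstacle in the term $\hat{{\cal D}}_{{\cal S}'}(\bm{\lambda}^*_{\cal S})$. The problem is that your proposed resolution does not close the gap. The strong-concavity detour gives $\|\bm{\lambda}^*_{\cal S}-\bm{\lambda}^*_{\check{\cal S}}\|\le \tfrac{2\sqrt{M}(n-m)}{\epsilon m}$ (with $M$ the gradient bound), hence an \emph{additive} contribution of order $\tfrac{(n-m)^2}{nm}\cdot\tfrac{M}{\epsilon}$ to the right-hand side. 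This is not dominated by $\tfrac{n-m}{n}{\cal H}_s(m)$ in general: the lemma is invoked in Appendix~\ref{app.prop2} with $n=2m$ (cf.\ \eqref{eq.U_rec}), where your extra term is $\Theta(M/\epsilon)$, a constant that does not vanish with $m$. Moreover, even when that term is negligible, routing the comparison through ${\cal D}(\bm{\lambda}^*_{\check{\cal S}})$ and ${\cal D}(\bm{\lambda}_{\check{\cal S}})$ costs \emph{two} applications of the uniform bound \eqref{ineq.gene}, yielding $\tfrac{n-m}{n}\cdot 2{\cal H}_s(m)$ rather than $\tfrac{n-m}{n}{\cal H}_s(m)$; the downstream constants $c(K)$ and $c(1)$ in Lemma~\ref{error-rec} and Proposition~\ref{error-K=1} are computed from the prefactor $1$, so this is not cosmetic.

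The missing ingredient (and the device used in the cited proof) is the subset-averaging identity: for any $\bm{\lambda}$,
\begin{equation*}
\hat{{\cal D}}_{\cal S}(\bm{\lambda})=\binom{n}{m}^{-1}\sum_{{\cal T}\subset{\cal S},\,|{\cal T}|=m}\hat{{\cal D}}_{\cal T}(\bm{\lambda}),
\end{equation*}
which, combined with $\hat{{\cal D}}_{\cal T}(\bm{\lambda}^*_{\cal S})\le\max_{\bm{\lambda}\ge\mathbf{0}}\hat{{\cal D}}_{\cal T}(\bm{\lambda})$ and the exchangeability of the i.i.d.\ samples under (as1), gives $\mathbb{E}[\hat{{\cal D}}_{\cal S}(\bm{\lambda}^*_{\cal S})]\le\mathbb{E}[\hat{{\cal D}}_{\check{\cal S}}(\bm{\lambda}^*_{\check{\cal S}})]$ \emph{without ever evaluating} $\hat{{\cal D}}_{{\cal S}'}$ at $\bm{\lambda}^*_{\cal S}$. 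From there the proof is short: writing $\mathbb{E}[\hat{{\cal D}}_{\cal S}(\bm{\lambda}_{\check{\cal S}})]=\tfrac{m}{n}\mathbb{E}[\hat{{\cal D}}_{\check{\cal S}}(\bm{\lambda}_{\check{\cal S}})]+\tfrac{n-m}{n}\mathbb{E}[{\cal D}(\bm{\lambda}_{\check{\cal S}})]$ by the independence you already noted, one gets
\begin{equation*}
\mathbb{E}\big[\hat{{\cal D}}_{\cal S}(\bm{\lambda}^*_{\cal S})-\hat{{\cal D}}_{\cal S}(\bm{\lambda}_{\check{\cal S}})\big]\le\tfrac{m}{n}\,\mathbb{E}\big[\hat{{\cal D}}_{\check{\cal S}}(\bm{\lambda}^*_{\check{\cal S}})-\hat{{\cal D}}_{\check{\cal S}}(\bm{\lambda}_{\check{\cal S}})\big]+\tfrac{n-m}{n}\,\mathbb{E}\big[\hat{{\cal D}}_{\check{\cal S}}(\bm{\lambda}^*_{\check{\cal S}})-{\cal D}(\bm{\lambda}_{\check{\cal S}})\big]\le\omega+\tfrac{n-m}{n}{\cal H}_s(m),
\end{equation*}
where the last step inserts $\pm\hat{{\cal D}}_{\check{\cal S}}(\bm{\lambda}_{\check{\cal S}})$ and uses \eqref{ineq.gene} exactly once. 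I would rewrite your argument around this identity; the rest of your proposal can stay as is.
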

	Lemma \ref{small-lemma5} states that if the set $\check{\cal S}$ grows to ${\cal S}$, then the optimization error on the larger set ${\cal S}$ is bounded by the original error $\omega$ plus an additional ``growth cost'' $(1-m/n){\cal H}_s(m)$.
	Note that Lemma \ref{small-lemma5} considers a general case of increasing training set from $\check{\cal S}$ to ${\cal S}$, but the dynamic learning steps of online SAGA in Algorithm \ref{algo2} is a special case when $\check{\cal S}\!=\!{\cal S}_{t-1}$ and ${\cal S}\!=\!{\cal S}_t$.

Building upon Lemma \ref{small-lemma5} and Theorem \ref{the.linear-rate}, we are ready to derive the optimization error of online SAGA.
To achieve this goal, we first define a sequence ${\cal U}_o(k;n)$, which will prove to provide an upper-bound for the optimization error by \textit{incrementally} running $k$ offline SAGA iterations within a dataset ${\cal S}$ with $n=|{\cal S}|$; i.e., $\mathbb{E}[\hat{\cal D}_{\cal S}(\bm{\lambda}^*_{\cal S})-\hat{\cal D}_{\cal S}(\bm{\lambda}_k)]\leq {\cal U}_o(k;n)$.
Note that index $k$ is used to differentiate from time index $t$, since it includes the SAGA iterations in both offline and online phases.

Considering a dataset ${\cal S}$ with $n=|{\cal S}|$, running an additional SAGA iteration \eqref{eq.saga} will reduce the error by a factor of $\Gamma_n$ defined in \eqref{eq.linear-fun}.
Hence, if ${\cal U}_o(k-1;n)$ gives an upper-bound of $\mathbb{E}[\hat{\cal D}_{\cal S}(\bm{\lambda}^*_{\cal S})-\hat{\cal D}_{\cal S}(\bm{\lambda}_{k-1})]$, it follows that
\begin{equation}\label{eq.onestep-1}
\mathbb{E}[\hat{\cal D}_{\cal S}(\bm{\lambda}^*_{\cal S})-\hat{\cal D}_{\cal S}(\bm{\lambda}_k)]\leq \Gamma_n{\cal U}_o(k-1;n).
\end{equation}

On the other hand, the error of running $k$ iterations within $n$ data should also satisfy the inequality \eqref{The3-small} in Lemma \ref{small-lemma5} which holds from any $m<n$.
Thus, if given ${\cal U}_o(k;m)$, it follows that
\begin{equation}\label{eq.onestep-2}
\!\mathbb{E}\!\left[\hat{\cal D}_{\cal S}(\bm{\lambda}^*_{\cal S})\!-\!\hat{\cal D}_{\cal S}(\bm{\lambda}_k)\right]\!\!\leq\! \min_{m<n}\!\left[{\cal U}_o(k;m)\!+\!\frac{n-m}{n}{\cal H}_s(m)\right]\!\!\!\!
\end{equation}
 where the minimum follows from that \eqref{The3-small} holds for any $m<n$.

Combining \eqref{eq.onestep-1} and \eqref{eq.onestep-2} together, we define ${\cal U}_o(k;n)$ as the minimum of these two factors, given by
\begin{subequations}\label{eq.recur-U}
	\begin{numcases}{\hspace{-0.8cm}
	{\cal U}_o(k;n)=\min}
	\Gamma_n\,{\cal U}_o(k-1;n)\label{eq.recur-U1}\\
	\min_{m<n}\left[{\cal U}_o(k;m)+\frac{n-m}{n}{\cal H}_s(m)\right]\label{eq.recur-U2}
\end{numcases}
\end{subequations}
where the initial bound is defined as ${\cal U}_o(0;N_{\rm off})\!=\!\xi\geq \mathbb{E}[C_{{\cal S}_{\rm off}}(\bm{\lambda}_0)]$ with constant $C_{{\cal S}_{\rm off}}(\bm{\lambda}_0)$ in \eqref{eq.linear-rate_2}.
Note that following the arguments for \eqref{eq.onestep-1} and \eqref{eq.onestep-2}, one can use induction to show that the sequence ${\cal U}_o(k;n)$ indeed upper bounds the expected optimization error $\mathbb{E}[\hat{\cal D}_{\cal S}(\bm{\lambda}^*_{\cal S})-\hat{\cal D}_{\cal S}(\bm{\lambda}_k)]$ with $n=|{\cal S}|$.

As online SAGA runs on average $K$ offline SAGA iterations per sample, and the dataset ${\cal S}_t$  has $N_t$ samples, with $k=KN_t$ and $n=N_t$, we want to show the following bound
\begin{equation}\label{eq.equv-err}
	{\cal U}_o(KN_t;N_t)\leq c(K){\cal H}_s(N_t)+\frac{\xi}{e^{K/5}}\Big(\frac{3\kappa}{4N_t}\Big)^{K/5}.
\end{equation}
We prove the inequality \eqref{eq.equv-err} thus \eqref{ineq.error-rec} in Lemma \ref{error-rec} by induction.
Starting from $N_0=N_{\rm off}=3\kappa/4$ at time $t=0$, the inequality \eqref{eq.equv-err} holds since
\begin{align}
	{\cal U}_o(KN_{\rm off};N_{\rm off})\leq & (1-1/(4N_{\rm off}))^{KN_{\rm off}}{\cal U}_o(0;N_{\rm off}) \nonumber\\
	\stackrel{(a)}{\leq}& \frac{\xi}{e^{K/4}}\leq \frac{\xi}{e^{K/5}}
\end{align}
 where the inequality (a) is due to $(1-1/n)^{n}\leq 1/e$.
 Assume the inequality \eqref{eq.equv-err} holds for time $t$, and we next show that it is also true for time $t+1$ with $KN_{t+1}$ iterations and $N_{t+1}$ samples.
Specifically, we have that (cf. $N_{t+1}:=N_t+1$)
\begin{align}\label{eq.lemma1-step1}
 	&{\cal U}_o(KN_{t+1};N_{t+1}) \stackrel{(b)}{\leq}(\Gamma_{N_{t+1}})^K{\cal U}_o(KN_t;N_{t+1})\\
 	&\!\!\!\stackrel{(c)}{\leq}\left(\frac{4N_t\!+\!3}{4N_{t+1}}\right)^{\!\!K}\!\left[{\cal U}_o(KN_t;N_t)+\frac{{\cal H}_s(N_t)}{N_{t+1}}\right]\nonumber\\
 	&\!\!\!\stackrel{(d)}{\leq}\!\!\left(\frac{4N_t\!+\!3}{4N_{t+1}}\right)^{\!\!K}\!\!\left[c(K){\cal H}_s(N_t)\!+\!\frac{\xi}{e^{K/5}}\Big(\frac{3\kappa }{4N_t}\Big)^{\!K/5}\!\!\!\!+\!\frac{{\cal H}_s(N_t)}{N_{t+1}}\right]\! \nonumber
 \end{align}
 where inequality (b) uses the recursion \eqref{eq.recur-U1} for $K$ times; (c) follows from the definition of $\Gamma_{N_{t+1}}$ in \eqref{eq.linear-fun} as well as the recursion \eqref{eq.recur-U2}; and (d) holds since we assume that the argument holds for $N_t$.
 Rearranging \eqref{eq.lemma1-step1}, it follows that
 \begin{align}\label{eq.lemma1-step2}
 	\!&{\cal U}_o(KN_{t+1};N_{t+1})\leq\!\!\\
 	&\!\!\!\left(\!\frac{4N_t\!+\!3}{4N_{t+1}}\right)^{\!\!K}\!\!\!\left[c(K)\!+\!\frac{1}{N_{t+1}}\right]\!\!{\cal H}_s(N_t)\!+\!\frac{\xi}{e^{K/5}}\!\!\!\left(\frac{4N_t\!+\!3}{4N_{t+1}}\right)^{\!\!K}\!\!\!\Big(\frac{3\kappa}{4N_t}\Big)^{\!K/5}\!\!\!\!.\nonumber
 \end{align}

Next we prove the following two inequalities:
 \begin{equation*}
 \!	{\rm (e1)}\left(\frac{4N_t\!+\!3}{4N_{t+1}}\right)^{\!5}\!\cdot\frac{{\cal H}_s(N_t)}{{\cal H}_s(N_{t+1})}\leq 1;~ {\rm (e2)}\left(\!\frac{4N_t\!+\!3}{4N_{t+1}^{4/5}N_t^{1/5}}\!\right)^{\!\!K}\!\!\!\leq 1,\,\forall K
\end{equation*}
because if (e1) and (e2) hold, one can upper bound the two terms in the RHS of \eqref{eq.lemma1-step2} separately and arrive at
 \begin{align}\label{eq.lemma1-54}
 &{\cal U}_o(KN_{t+1};N_{t+1})\leq\!\!\\
 &\left(\frac{4N_t+3}{4N_{t+1}}\right)^{\!\!K\!-5}\!\!\Big[c(K)\!+\!\frac{1}{N_{t+1}}\!\Big]{\cal H}_s(N_{t+1})\!+\!\frac{\xi}{e^{K/5}}\!\!\left(\!\frac{3\kappa}{4N_{t+1}}\!\right)^{\!\!K/5}\!\!\!\!.\nonumber
 \end{align}

 To prove (e1) and (e2), we repeatedly use Bernoulli's inequality, namely, $\left(1+x/p\right)^p\geq \left(1+x/q\right)^q,\;\forall x>0,\,p>q>0$, which for (e1) implies that
 \begin{equation}\label{ineq.d1}
 	\!\!\left(\frac{4N_{t+1}}{4N_t+3}\right)^{\!\!5}\!=\!\left(\!1\!+\!\frac{5}{5(4N_t+3)}\!\right)^{\!\!5}\!\stackrel{(f)}{\geq}\! 1+\!\frac{5}{4N_t+3}\stackrel{(g)}{\geq} \frac{N_{t+1}}{N_t}\!\!\!
 \end{equation}
 where (f) uses Bernoulli's inequality with $p=5$ and $q=1$, and (g) holds for $N_t\geq 3$.
 Building upon \eqref{ineq.d1}, the inequality (d1) follows since
 \begin{equation}
 	\frac{N_t}{N_{t+1}}\cdot\frac{{\cal H}_s(N_t)}{{\cal H}_s(N_{t+1})}=\left(\frac{N_t}{N_{t+1}}\right)^{\!\!1-\beta}\!\!\!\leq 1,~~~\forall \beta\in [0,1].
 \end{equation}

For (e2), since $\Big(\frac{4N_t+3}{4N_{t+1}^{4/5}N_t^{1/5}}\Big)^{\!K}\!=\!\Big(\frac{(N_t+3/4)^5}{N_t(N_t+1)^4}\Big)^{K/5}$, it suffices to show that $\frac{(N_t+3/4)^5}{N_t(N_t+1)^4}\leq 1$, which implies that
\begin{align*}
	\frac{(N_t+3/4)^5}{N_t(N_t+1)^4}&=\frac{N_t+3/4}{N_t\left(1+\frac{1/4}{N_t+3/4}\right)^4}\stackrel{(h)}{\leq} \frac{N_t+3/4}{N_t\left(1+\frac{1}{N_t+3/4}\right)}\\
	&=\frac{(N_t+3/4)^2}{N_t\left(N_t+7/4\right)}=1-\frac{N_t/4-9/16}{N_t\left(N_t+7/4\right)}\stackrel{(i)}{\leq}1
\end{align*}
 where (h) uses Bernoulli's inequality with $p=4$ and $q=1$, and (i) again holds for $N_t\geq N_{\rm off}\geq 3$, which is typically satisfied in batch training scenarios. Hence, inequality \eqref{eq.lemma1-54} holds.

Building upon \eqref{eq.lemma1-54}, we will show next that
\begin{equation}\label{eq.ck}
	\left(\frac{4N_t\!+\!3}{4N_{t+1}}\right)^{K-5}\!\left(c(K)+\frac{1}{N_{t+1}}\right)\leq c(K)
\end{equation}
from which we can conclude that \eqref{eq.equv-err} holds for $t+1$, namely
 \begin{align}\label{eq.lemma1-step3}
\!\!\!{\cal U}_o(KN_{t+1};N_{t+1})\!\leq\! c(K){\cal H}_s(N_{t+1})\!+\!\frac{\xi}{e^{K/5}}\!\left(\frac{3\kappa}{4N_{t+1}}\right)^{\!\!K/5}\!\!\!\!\!\!.\!
 \end{align}

By rearranging terms, \eqref{eq.ck} is equivalent to
\begin{align}\label{eq.lemma1-step4}
	c(K) &\geq \left(\left(\left(\frac{4N_{t+1}}{4N_t\!+\!3}\right)^{K-5}-1\right)N_{t+1}\right)^{-1}\nonumber\\
	&\stackrel{(j)}{=}\left(\left(\sum_{k=0}^{K-6}\left(\frac{4N_{t+1}}{4N_t\!+\!3}\right)^k\right)\left(\frac{4N_{t+1}}{4N_t\!+\!3}-1\right)N_{t+1}\right)^{-1}\nonumber\\
	&= {\left( \frac{1}{4} \sum_{k=1}^{K-5}\left(\frac{4N_{t+1}}{4N_t\!+\!3}\right)^k\right)}^{-1}
\end{align}
where (j) follows from the sum of a geometric sequence.
Observe that the RHS of \eqref{eq.lemma1-step4} is increasing with $t$, thus we have that
\begin{align}
c(K):=\frac{4}{K-5}&=\lim_{t\rightarrow \infty}{\left( \frac{1}{4}\sum_{k=1}^{K-4}\Big(\frac{4N_{t+1}}{4N_t+3}\Big)^k\right)}^{\!\!-1}\nonumber\\
&\geq {\left( \frac{1}{4}\sum_{k=1}^{K-4}\Big(\frac{4N_{t+1}}{4N_t+3}\Big)^k\right)}^{\!\!-1}\!,\;\forall t
\end{align}
from which \eqref{eq.ck} holds and so does inequality \eqref{eq.lemma1-step3}.
This completes the proof of Lemma \ref{error-rec}.

 \subsection{Proof of Proposition \ref{error-K=1}}\label{app.prop2}

 Using Lemma \ref{error-rec} and selecting $K=8$, we have that (cf. the definition of ${\cal U}_o(k;n)$ in Appendix \ref{app.B})
 \begin{align}
 	{\cal U}_o(8N_t;N_t)&\leq \frac{4}{3}{\cal H}_s(N_t)+\xi \Big(\frac{3\kappa}{4eN_t}\Big)^{1.6}.
 \end{align}
It further follows from Lemma \ref{small-lemma5} that
  \begin{align}\label{eq.U_rec}
  	 	&{\cal U}_o(8N_t;2N_t)\stackrel{(a)}{\leq}{\cal U}_o(8N_t;N_t)+\frac{1}{2}{\cal H}_s(N_t)\\
  	 	&\leq \frac{11}{6}{\cal H}_s(N_t)+\xi \Big(\frac{3\kappa}{4eN_t}\Big)^{1.6}\!\stackrel{(b)}{=}\!\frac{11}{6}\cdot 2^{\beta}{\cal H}_s(2N_t)\!+\!\xi \Big(\frac{3\kappa}{4eN_t}\Big)^{1.6}\nonumber
  \end{align}
where (a) follows from \eqref{eq.recur-U2}, and (b) uses the definition of ${\cal H}_s(N)$ such that ${\cal H}_s(2N_t)=2^{-\beta}{\cal H}_s(N_t)$. By setting $N_t=N_t/2$, we have
\begin{equation}
  	 	{\cal U}_o(4N_t;N_t)\leq \frac{11}{6}\cdot 2^{\beta}{\cal H}_s(N_t)+\xi \Big(\frac{3\kappa}{2eN_t}\Big)^{1.6}.
  \end{equation}
Following the steps in \eqref{eq.U_rec}, we can also deduce that
 \begin{align}
  	 	\!\!{\cal U}_o(2N_t;N_t)\!\leq\! \left(\frac{11}{6}\cdot 4^{\beta}\!+\!\frac{1}{2}\cdot 2^{\beta}\right){\cal H}_s(N_t)\!+\!\xi \Big(\frac{3\kappa}{eN_t}\Big)^{1.6}
  \end{align}
  and likewise we have
   \begin{align}
  	 	\!\!\!{\cal U}_o(N_t;N_t)\!\leq \!\left(\frac{11\times 8^{\beta}}{6}\!+\!\frac{4^{\beta}}{2}\!+\!\frac{2^{\beta}}{2}\right){\cal H}_s(N_t)\!+\!\xi \Big(\frac{6\kappa}{eN_t}\Big)^{1.6}\!\!\!
  \end{align}
which completes the proof.

\subsection{Proof of Lemma \ref{lem.drift}}\label{app.lemma2}
To prove Lemma \ref{lem.drift}, we first state a simple but useful property of the primal-dual problems \eqref{eq.reform2} and \eqref{eq.dual-prob}, which can be easily derived based on the KKT conditions \cite{bertsekas1999}.
\begin{proposition}\label{prop.primal-dual}
	Under (as1)-(as3), for the constrained optimization \eqref{eq.reform2} with the optimal policy $\bm{\chi}^*(\cdot)$ and its optimal Lagrange multiplier $\bm{\lambda}^*$, it holds that $\mathbb{E}[\mathbf{A}\mathbf{x}_t^*+\mathbf{c}_t]=\mathbf{0}$ with $\mathbf{x}_t^*=\bm{\chi}^*(\mathbf{s}_t)$, and accordingly that $\nabla {\cal D}(\bm{\lambda}^*)=\mathbf{0}$.
\end{proposition}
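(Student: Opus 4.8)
The plan is to prove the two assertions in turn: first that the dualized constraint \eqref{eq.reforme1} holds with equality in \emph{every} coordinate at the primal optimum, and then to convert this into $\nabla{\cal D}(\bm{\lambda}^*)=\mathbf{0}$ by identifying the (super)gradient of ${\cal D}$ with the averaged constraint slack. For the first part I would not invoke duality at all — only optimality of $\bm{\chi}^*$ (whose existence is already guaranteed by the stationary-policy argument preceding \eqref{eq.reform2}) together with {(as1)}--{(as2)} and the structure of $\mathbf{A}$.

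\textbf{Step 1: the constraint is tight.} I would argue by contradiction: suppose the $i_0$-th entry of $\mathbb{E}[\mathbf{A}\mathbf{x}_t^*+\mathbf{c}_t]$ is strictly negative for some node $i_0$. Since every row of $\mathbf{A}$ has at least one $-1$ entry (each node has an outgoing link) and $\mathbf{c}_t\geq\mathbf{0}$, if node $i_0$ carried zero flow on all of its outgoing links almost surely we would have $(\mathbf{A}\mathbf{x}_t^*+\mathbf{c}_t)_{i_0}\geq 0$ a.s., a contradiction. Hence there is an outgoing link $e_0$ of $i_0$ and an $\varepsilon>0$ with $\mathbb{P}(x_{e_0}^*>\varepsilon)>0$, where $x_{e_0}^*$ denotes the entry of $\mathbf{x}_t^*$ associated with link $e_0$. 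I would then build a competing policy by decreasing $x_{e_0}^*$ by a small constant $\delta<\varepsilon$ on the event $\{x_{e_0}^*>\varepsilon\}$ and leaving everything else unchanged. Because each column of $\mathbf{A}$ has at most one $-1$ and at most one $+1$, this perturbation raises only the $i_0$-th entry of the averaged slack (by at most $\delta\,\mathbb{P}(x_{e_0}^*>\varepsilon)$, which for $\delta$ small keeps it $\leq 0$ since that entry was strictly negative) and changes at most one other entry, which it can only decrease; the box constraints \eqref{eq.reforme2} are trivially preserved, so the new policy is feasible for \eqref{eq.reform2}. At the same time, $\sigma$-strong convexity of $\Psi_t$ restricted to the $e_0$-coordinate, combined with the non-decreasing property in {(as2)}, forces the partial derivative of $\Psi_t$ along that coordinate to exceed $\sigma\varepsilon>0$ on $\{x_{e_0}^*>\varepsilon\}$, so the perturbed policy has strictly smaller $\mathbb{E}[\Psi_t]$. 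This contradicts optimality of $\bm{\chi}^*$, and therefore $\mathbb{E}[\mathbf{A}\mathbf{x}_t^*+\mathbf{c}_t]=\mathbf{0}$.

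\textbf{Step 2: passing to the dual.} Under {(as2)}, ${\cal L}_t(\cdot,\bm{\lambda})$ is strongly convex, so its minimizer over ${\cal X}$ is unique for every $\bm{\lambda}\geq\mathbf{0}$; Danskin's theorem then gives that ${\cal D}$ is differentiable with $\nabla{\cal D}(\bm{\lambda})=\mathbb{E}[\mathbf{A}\mathbf{x}_t(\bm{\lambda})+\mathbf{c}_t]$, where $\mathbf{x}_t(\bm{\lambda}):=\arg\min_{\mathbf{x}_t\in{\cal X}}{\cal L}_t(\mathbf{x}_t,\bm{\lambda})$. Slater's condition {(as3)} yields zero duality gap and the saddle-point property \cite{bertsekas1999,neely2010}, so $\bm{\chi}^*(\mathbf{s}_t)$ minimizes ${\cal L}_t(\cdot,\bm{\lambda}^*)$ over ${\cal X}$ almost surely; by uniqueness of the minimizer, $\mathbf{x}_t^*=\mathbf{x}_t(\bm{\lambda}^*)$ a.s. Substituting and invoking Step 1 gives $\nabla{\cal D}(\bm{\lambda}^*)=\mathbb{E}[\mathbf{A}\mathbf{x}_t^*+\mathbf{c}_t]=\mathbf{0}$.

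\textbf{Expected difficulty.} The technical heart is Step 1: checking that the single-coordinate perturbation keeps the policy feasible for \emph{all} node constraints of \eqref{eq.reform2} — which is exactly where the structural hypotheses on $\mathbf{A}$ (at least one outgoing link per node, at most one source per link) are used — and that the induced cost change is strictly negative, which needs ``non-decreasing $+$ $\sigma$-strongly convex'' in order to guarantee a strictly positive partial derivative once a flow is positive. By contrast, complementary slackness alone (i.e.\ $(\bm{\lambda}^*)^{\top}\nabla{\cal D}(\bm{\lambda}^*)=0$ with $\nabla{\cal D}(\bm{\lambda}^*)\leq\mathbf{0}$) cannot rule out a strictly negative component at a node whose multiplier vanishes, so the monotonicity argument of Step 1 is indispensable.
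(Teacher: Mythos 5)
Your proof is correct, and it actually supplies more than the paper does: the paper never proves Proposition~\ref{prop.primal-dual}, asserting only that it ``can be easily derived based on the KKT conditions'' with a citation to \cite{bertsekas1999}. Your Step~2 (Danskin differentiability of ${\cal D}$ via strong convexity of ${\cal L}_t(\cdot,\bm{\lambda})$, Slater/zero duality gap, saddle-point property, uniqueness of the Lagrangian minimizer) is the standard machinery the authors presumably have in mind, and it is carried out properly. Your key added value is Step~1 and the closing observation: complementary slackness only yields $(\bm{\lambda}^*)^{\top}\mathbb{E}[\mathbf{A}\mathbf{x}_t^*+\mathbf{c}_t]=0$ together with $\mathbb{E}[\mathbf{A}\mathbf{x}_t^*+\mathbf{c}_t]\leq\mathbf{0}$, which does not force equality in a coordinate whose multiplier vanishes; the monotonicity of $\Psi_t$ in (as2) is genuinely needed, and your perturbation argument (lower a positive outgoing flow at an over-served node; feasibility is preserved by the sign structure of the columns of $\mathbf{A}$; the cost strictly drops because non-decreasing plus $\sigma$-strong convexity gives $\partial_{e_0}\Psi_t\geq\sigma x_{e_0}>\sigma\varepsilon$ on the relevant event) correctly fills that gap. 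For completeness, there is an equivalent dual-side shortcut closer in spirit to the authors' ``KKT'' remark: if $\lambda_{i_0}^*=0$, then every link $e$ leaving $i_0$ carries coefficient $\lambda_{{\rm dest}(e)}^*-\lambda_{i_0}^*\geq 0$ in the linear term of ${\cal L}_t(\cdot,\bm{\lambda}^*)$, so by monotonicity and strong convexity the unique minimizer sets all outgoing flows of $i_0$ to zero, whence $\nabla_{i_0}{\cal D}(\bm{\lambda}^*)=\mathbb{E}[(\mathbf{A}\mathbf{x}_t(\bm{\lambda}^*)+\mathbf{c}_t)_{i_0}]\geq 0$; combined with dual optimality ($\nabla_{i_0}{\cal D}(\bm{\lambda}^*)\leq 0$ at an inactive nonnegativity constraint, and $=0$ by complementary slackness otherwise) this gives the same conclusion with slightly less bookkeeping. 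Either route is fine; yours is self-contained and identifies exactly which hypotheses do the work.
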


Based on Proposition \ref{prop.primal-dual}, we are ready to prove Lemma \ref{lem.drift}.
Since $\bm{\lambda}_t$ converges to $\bm{\lambda}^*\; {\rm whp}$, and $\mathbf{b}>\mathbf{0}$, there exists a finite $T_b$ such that for $t>T_b$, we have $\|\bm{\lambda}^*-\bm{\lambda}_t\|\leq \|\mathbf{b}\|$ and thus $\tilde{\mathbf{b}}_t\geq \mathbf{0}$ by definition. Hence,
\begin{align}
	\|\mathbf{q}_{t+1}-&\tilde{\mathbf{b}}_t/\mu\|^2\stackrel{(a)}{\leq}\|\mathbf{q}_t+\mathbf{A}\mathbf{x}_t+\mathbf{c}_t-\tilde{\mathbf{b}}_t/\mu\|^2\nonumber\\
	\stackrel{(b)}{\leq} & \|\mathbf{q}_t-\tilde{\mathbf{b}}_t/\mu\|^2+M+2(\mathbf{q}_t-\tilde{\mathbf{b}}_t/\mu)^{\top}(\mathbf{A}\mathbf{x}_t+\mathbf{c}_t)\nonumber\\
	\stackrel{(c)}{=} & \|\mathbf{q}_t-\tilde{\mathbf{b}}_t/\mu\|^2+M+2\left(\frac{\bm{\gamma}_t-\bm{\lambda}^*}{\mu}\right)^{\top}(\mathbf{A}\mathbf{x}_t+\mathbf{c}_t)\nonumber\\
	\stackrel{(d)}{\leq}&  \|\mathbf{q}_t-\tilde{\mathbf{b}}_t/\mu\|^2+\frac{2}{\mu}\left({\cal D}_t(\bm{\gamma}_t)-{\cal D}_t(\bm{\lambda}^*)\right)+M\label{ineq.drift0}
	\end{align}
where (a) comes from the non-expansive property of the projection operator; (b) uses the upper bound $M:=\max_t\max_{\mathbf{x}_t\in{\cal X}}\|\mathbf{A}\mathbf{x}_t+\mathbf{c}_t\|^2$; equality (c) uses the definitions of $\tilde{\mathbf{b}}_t$ and $\bm{\lambda}_t$; and (d) follows because $\mathbf{A}\mathbf{x}_t+\mathbf{c}_t$ is a subgradient of the concave function ${\cal D}_t(\bm{\lambda})$ at $\bm{\lambda}=\bm{\gamma}_t$ (cf. \eqref{eq.real-time}).

Using the strong concavity of ${\cal D}(\bm{\lambda})$ at $\bm{\lambda}=\bm{\lambda}^*$, it holds that
\begin{align}\label{strong-dual}
	{\cal D}(\bm{\gamma}_t)\leq &{\cal D}(\bm{\lambda}^*)+\nabla {\cal D}(\bm{\lambda}^*)^{\top}(\bm{\gamma}_t-\bm{\lambda}^*)-\frac{\epsilon}{2}\|\bm{\gamma}_t-\bm{\lambda}^*\|^2\nonumber\\
	\stackrel{(e)}{\leq} &{\cal D}(\bm{\lambda}^*)-\frac{\epsilon}{2}\|\bm{\gamma}_t-\bm{\lambda}^*\|^2
\end{align}
where (e) follows from $\nabla {\cal D}(\bm{\lambda}^*)=\mathbf{0}$ (cf. Proposition \ref{prop.primal-dual}).

Then using \eqref{strong-dual}, and taking expectations on \eqref{ineq.drift0} over $\mathbf{s}_t$ conditioned on $\mathbf{q}_t$, we have
\begin{equation}\label{ineq.drift}
	\mathbb{E}\left[\|\mathbf{q}_{t+1}-\tilde{\mathbf{b}}_t/\mu\|^2\right]\leq\|\mathbf{q}_t-\tilde{\mathbf{b}}_t/\mu\|^2-\mu\epsilon\|\mathbf{q}_t-\tilde{\mathbf{b}}_t/\mu\|^2+M
\end{equation}
where we used ${\cal D}(\bm{\lambda}):=\mathbb{E}\left[{\cal D}_t(\bm{\lambda})\right]$, and $\bm{\gamma}_t-\bm{\lambda}^*=\mu\mathbf{q}_t-\tilde{\mathbf{b}}_t$.

Hence, based on \eqref{ineq.drift}, if we have
\begin{equation}\label{ineq.vieta}
	-\mu\epsilon\|\mathbf{q}_t-\tilde{\mathbf{b}}_t/\mu\|^2+M\leq -2\sqrt{\mu}\|\mathbf{q}_t-\tilde{\mathbf{b}}_t/\mu\|+\mu
\end{equation}
it holds that $\mathbb{E}\big[\|\mathbf{q}_{t+1}-\tilde{\mathbf{b}}_t/\mu\|^2\big]\!\leq\! \big(\|\mathbf{q}_t-\tilde{\mathbf{b}}_t/\mu\|-\sqrt{\mu}\big)^2$,
and by the convexity of the quadratic function, we have
\begin{equation}
	\mathbb{E}\left[\|\mathbf{q}_{t+1}-\tilde{\mathbf{b}}_t/\mu\|\right]^2\leq \left(\|\mathbf{q}_t-\tilde{\mathbf{b}}_t/\mu\|-\sqrt{\mu}\right)^2
\end{equation}
which implies \eqref{eq.drift} in the lemma.
Vieta's formulas for second-order equations ensure existence of $B=\Theta(\frac{1}{\sqrt{\mu}})$ so that \eqref{ineq.vieta} is satisfied for $\|\mathbf{q}_t-\tilde{\mathbf{b}}_t/\mu\|>B$, and thus the lemma follows.

\subsection{Proof sketch for Theorem \ref{the.queue-stable}}\label{app.theorem3}
Lemma \ref{lem.drift} assures that $\mathbf{q}_t$ always tracks a time-varying target $\tilde{\mathbf{b}}_t/\mu$. As $\lim_{t\rightarrow \infty}\tilde{\mathbf{b}}_t/\mu=\mathbf{b}/\mu \;{\rm w.h.p.}$, $\mathbf{q}_t$ will eventually track $\mathbf{b}/\mu$ and deviate by a distance $B=\Theta(\frac{1}{\sqrt{\mu}})$, which based on the Foster-Lyapunov Criterion implies that the steady state of Markov chain $\{\mathbf{q}_t\}$ exists \cite{meyn2012}. A rigorous proof of this claim  follows the lines of \cite[Theorem 1]{huang2014}, which is omitted here.

\subsection{Proof of Theorem \ref{gap-onlineSAGA}}\label{app.theorem4}

Letting $\Delta(\mathbf{q}_t)\!:=\!\frac{1}{2}(\|\mathbf{q}_{t+1}\|^2\!-\!\|\mathbf{q}_t\|^2)$ denote the Lyapunov drift, and squaring the queue update, yields
\begin{align*}
	\|\mathbf{q}_{t+1}\|^2=&\|\mathbf{q}_t\|^2+2\mathbf{q}_t^{\top}(\mathbf{A}\mathbf{x}_t+\mathbf{c}_t)+\|\mathbf{A}\mathbf{x}_t+\mathbf{c}_t\|^2\\
	\stackrel{(a)}{\leq}&\|\mathbf{q}_t\|^2+2\mathbf{q}_t^{\top}(\mathbf{A}\mathbf{x}_t+\mathbf{c}_t)+M
\end{align*}
where (a) follows from the upper bound of $\|\mathbf{A}\mathbf{x}_t+\mathbf{c}_t\|^2$. Multiplying both sides by $\mu/2$, and adding $\Psi_t(\mathbf{x}_t)$, leads to
\begin{align*}
	\mu\Delta(\mathbf{q}_t)+&\Psi_t(\mathbf{x}_t)=\Psi_t(\mathbf{x}_t)+\mu\mathbf{q}_t^{\top}(\mathbf{A}\mathbf{x}_t+\mathbf{c}_t)+{\mu M}/{2}\\
	\stackrel{(b)}{=}&\Psi_t(\mathbf{x}_t)+(\bm{\gamma}_t-\bm{\lambda}_t+\mathbf{b})^{\top}(\mathbf{A}\mathbf{x}_t+\mathbf{c}_t)+{\mu M}/{2}\\
	\stackrel{(c)}{=}&{\cal L}_t(\mathbf{x}_t,\bm{\gamma}_t)+(\mathbf{b}-\bm{\lambda}_t)^{\top}(\mathbf{A}\mathbf{x}_t+\mathbf{c}_t)+{\mu M}/{2}
\end{align*}
where (b) uses the definition of $\bm{\gamma}_t$ and (c) is the definition of the instantaneous Lagrangian. Taking expectations over $\mathbf{s}_t$ conditioned on $\mathbf{q}_t$, we arrive at
\begin{align}\label{eq.57}
	&\mu\mathbb{E}\left[\Delta(\mathbf{q}_t)\right]+\mathbb{E}\left[\Psi_t(\mathbf{x}_t)\right]\nonumber\\\stackrel{(d)}{=}&{\cal D}(\bm{\gamma}_t)+\mathbb{E}\left[(\mathbf{b}-\bm{\lambda}_t)^{\top}(\mathbf{A}\mathbf{x}_t+\mathbf{c}_t)\right]+{\mu M}/{2}\nonumber\\
	  \stackrel{(e)}{\leq}&{\Psi}^{*}+\mathbb{E}\left[(\mathbf{b}-\bm{\lambda}_t)^{\top}(\mathbf{A}\mathbf{x}_t+\mathbf{c}_t)\right]+{\mu M}/{2}
\end{align}
where (d) follows from the definition \eqref{eq.dual-func}, while (e) uses the weak duality and the fact that $\tilde{\Psi}^{*}\leq {\Psi}^{*}$.

Taking expectations on both sides of \eqref{eq.57} over $\mathbf{q}_t$, summing both sides over $t=1,\ldots,T$, dividing by $T$, and letting $T\rightarrow \infty$, we arrive at
\begin{align}\label{eq.43}
	\!\!\!&\lim_{T\rightarrow \infty} ({1}/{T}) \textstyle\sum_{t=1}^{T}\mathbb{E}\left[\Psi_t(\mathbf{x}_t)\right]\nonumber\\
	\!\!\! \stackrel{(f)}{\!\leq\!}&\!{\Psi}^{*}\!\!+\!\!\lim_{T\rightarrow \infty} \!({1}/{T})\!{\textstyle\sum_{t=1}^{T}}\!\mathbb{E}\!\left[(\mathbf{b}\!-\!\bm{\lambda}_t)\!^{\top}\!(\mathbf{A}\mathbf{x}_t\!+\!\mathbf{c}_t)\right]\!\!+\!\!\frac{\mu M}{2}\!+\!\!\lim_{T\rightarrow \infty}\!\!\!\frac{\mu\|\mathbf{q}_{1}\|^2}{2T}\nonumber\\
	  \!\!\!\stackrel{(g)}{\leq} &{\Psi}^{*}\!+\!\lim_{T\rightarrow \infty} ({1}/{T}) {\textstyle\sum_{t=1}^{T}}\mathbb{E}\left[(\mathbf{b}\!-\!\bm{\lambda}_t)^{\top}\!(\mathbf{A}\mathbf{x}_t+\mathbf{c}_t)\right]\!+\!\frac{\mu M}{2}\!
\end{align}
where (f) comes from $\mathbb{E}[\|\mathbf{q}_{T+1}\|^2]\geq 0$, and (g) follows because $\|\mathbf{q}_{1}\|$ is bounded.

What remains to show is that the second term in the RHS of \eqref{eq.43} is ${\cal O}(\mu)$.
Since $\bm{\lambda}_t$ converges to $\bm{\lambda}^*\; {\rm w.h.p.}$, there always exists a finite  $T_{\rho}$ such that for $t>T_{\rho}$, we have $\|\bm{\lambda}^*-\mathbf{b}-(\bm{\lambda}_t-\mathbf{b})\|\leq \rho,\;{\rm w.h.p.}$, and thus
\begin{align}\label{eq.44}
	&\lim_{T\rightarrow \infty} ({1}/{T}) {\textstyle\sum_{t=1}^{T}}\mathbb{E}\left[(\mathbf{b}-\bm{\lambda}_t)^{\top}(\mathbf{A}\mathbf{x}_t+\mathbf{c}_t)\right]\nonumber\\
=&\lim_{T\rightarrow \infty} 	({1}/{T}) {\textstyle\sum_{t=1}^{T}}\mathbb{E}\Big[\Big(\mathbf{b}\!-\!\bm{\lambda}^*\!+\!(\bm{\lambda}^*\!-\!\mathbf{b}\!-\!\bm{\lambda}_t\!+\!\mathbf{b})\Big)^{\!\!\top}\!\!(\mathbf{A}\mathbf{x}_t+\mathbf{c}_t)\Big]\nonumber\\
\stackrel{(h)}{\leq}&\lim_{T\rightarrow \infty}\! ({1}/{T}) {\textstyle\sum_{t=1}^{T}}\mathbb{E}\left[(\bm{\lambda}^*-\mathbf{b})^{\top}(-\mathbf{A}\mathbf{x}_t-\mathbf{c}_t)\right]+{\cal O}(\rho)\nonumber\\
\stackrel{(i)}{\leq}&\|\bm{\lambda}^*-\mathbf{b}\|\cdot \Big\|\lim_{T\rightarrow \infty} ({1}/{T}) {\textstyle\sum_{t=1}^{T}}\mathbb{E}\left[\mathbf{A}\mathbf{x}_t+\mathbf{c}_t\right]\Big\|+{\cal O}(\rho)
\end{align}
where (h) holds since $T_{\rho}<\infty$ and $\|\mathbf{A}\mathbf{x}_t+\mathbf{c}_t\|$ is bounded, and (i) follows from the Cauchy-Schwarz inequality.
Note that $\mathbf{A}\mathbf{x}_t+\mathbf{c}_t$ can be regarded as the service residual per slot $t$.
Since the steady-state queue lengths exist according to Theorem \ref{the.queue-stable},
it follows readily that there exist constants $D_1\!=\!\Theta(1/\mu)$, $D_2\!=\!\Theta(\sqrt{\mu})$ and $\tilde{B}\!=\!\Theta(1/\sqrt{\mu})$, so that
\begin{align}
	\mathbf{0}&\stackrel{(j)}{\leq} \lim_{T\rightarrow \infty} \frac{1}{T} \sum_{t=1}^{T}\mathbb{E}[-\mathbf{A}\mathbf{x}_t-\mathbf{c}_t]\stackrel{(k)}{\leq}\mathbf{1}\cdot\sqrt{M}\,\mathbb{P}\left(\mathbf{q}_t<\mathbf{1}\cdot\sqrt{M}\right)\nonumber\\
	&\stackrel{(l)}{\leq}\mathbf{1}\cdot\sqrt{M}D_1 e^{-D_2(\mathbf{b}/\mu-\tilde{B}-\sqrt{M})}
\end{align}
where (j) holds because all queues are stable (cf. Theorem \ref{the.queue-stable}); (k) follows since the maximum queue variation is $\sqrt{M}\!:=\!\max_t\max_{\mathbf{x}_t\in{\cal X}}\!\|\mathbf{A}\mathbf{x}_t+\mathbf{c}_t\|$, and negative accumulated service residual may happen only when the queue length $\mathbf{q}_t\!<\!\sqrt{M}$; and (l) comes from the large deviation bound of $\mathbf{q}_t$ in \cite[Lemma 4]{huang2014} and \cite[Theorem 4]{huang2011} with the steady-state queue lengths $\mathbf{b}/\mu$ in Theorem \ref{the.queue-stable}.
Setting $\mathbf{b}=\sqrt{\mu}\log^2(\mu)\cdot \mathbf{1}$, there exists a sufficiently small $\mu$ such that $-D_2(\frac{1}{\sqrt{\mu}}\log^2(\mu)-\tilde{B}-\sqrt{M})\leq 2\log(\mu)$, which implies that $\big\|\!\lim_{T\rightarrow \infty}\! \frac{1}{T}\! \sum_{t=1}^{T}\!\mathbb{E}[-\!\mathbf{A}\mathbf{x}_t\!-\!\mathbf{c}_t]\big\|\leq \|\mathbf{1}\|\cdot\sqrt{M}D_1\mu^2={\cal O}(\mu)$. Setting $\rho=\mathbf{o}(\mu)$, we have [cf. \eqref{eq.44}]
\begin{align}
	\lim_{T\rightarrow \infty} \frac{1}{T} \sum_{t=1}^{T}\mathbb{E}\left[(\mathbf{b}-\bm{\lambda}_t)^{\top}(\mathbf{A}\mathbf{x}_t+\mathbf{c}_t)\right]={\cal O}(\mu)
\end{align}
and the proof is complete.

\end{document}